\theoremstyle{Definition}
\numberwithin{equation}{section}
\newtheorem{theorem}{Theorem}
\newtheorem{corollary}{Corollary}
\newtheorem{lemma}{Lemma}
\newtheorem{remark}{Remark}
\newtheorem{definition}{Definition}
\newtheorem{example}{Example}
\newtheorem{conditions}{Conditions}
\def\S{{\mathcal{S}}}
\def\para{\vspace{4 mm}}
\def\cL{\mathcal{L}}
\def\cV{\mathcal{V}}
\def\cW{\mathcal{W}}
\def\cM{\mathcal{M}}
\def\cU{\mathcal{U}}
\def\K{\mathbb{K}}
\def\mapdeg{\mathrm{degMap}}
\def\betas{\mult(\myB(\cS))}
\def\myR{\mathcal{R}}
\def\oz{{\overline z}}
\def\ot{{\overline t}}
\def\oh{{\overline h}}
\def\ox{{\overline x}}
\def\oy{{\overline y}}
\def\proj2{\mathbb{P}^2(\mathbb{K})}
\def\projtres{\mathbb{P}^3(\mathbb{K})}
\def\myV{\mathscr{V}}
\def\myS{\mathscr{S}}
\def\myR{\mathscr{R}}
\def\myB{\mathscr{B}}
\def\myC{\mathscr{C}}
\def\myL{\mathscr{L}}
\def\myG{\mathscr{G}(\projtres)}
\def\myGS{\mathscr{G}(\proj2)}
\def\sing{\mathrm{Sing}}
\def\para{\vspace{2.5mm}}
\def\cP{{\mathcal P}}
\def\cQ{{\mathcal Q}}
\def\gcd{{\rm gcd}}
\def\S{{\mathcal S}}
\def\cS{{\mathcal S}}
\def\cR{{\mathcal R}}
\def\mult{\mathrm{mult}}
\def\deg{{\rm deg}}
\def\proj2{\mathbb{P}^2(\mathbb{K})}
\def\projtres{\mathbb{P}^3(\mathbb{K})}
\def\myV{\mathscr{V}}
\def\myS{\mathscr{S}}
\def\myR{\mathscr{R}}
\def\myB{\mathscr{B}}
\def\myC{\mathscr{C}}
\def\myL{\mathscr{L}}
\def\myG{\mathscr{G}(\projtres)}
\def\myGS{\mathscr{G}(\proj2)}
\def\oz{\,\overline{z}\,}
\def\cL{\mathcal{L}}
\def\cM{\mathcal{M}}
\def\cU{\mathcal{U}}
\def\K{\mathbb{K}}
\def\LcS{{}^{L\!}\cS}
\def\cRL{\cR^{L^{\!-1}}}
\def\ot{\,{\overline t}\,}
\def\ox{\,{\overline x}\,}
\def\oy{\,{\overline y}\,}
\begin{document}

 \title{Computing Birational Polynomial Surface Parametrizations without Base Points}
%\thanks{This work has been partially funded by Ministerio de Econom\'{\i}a y Competitividad  under the Project MTM2017-88796-P. The author S. P\'erez-D\'{\i}az is member of the Research Group ASYNACS (Ref. CCEE2011/R34)}
%% use optional labels to link authors explicitly to addresses:
%% \author[label1,label2]{<author name>}
%% \address[label1]{<address>}
%% \address[label2]{<address>}

\author{Sonia P\'erez-D\'{\i}az and J. Rafael Sendra\\
Dpto. de F\'{\i}sica y Matem\'aticas \\
        Universidad de Alcal\'a \\
      E-28871 Madrid, Spain  \\
sonia.perez@uah.es, rafael.sendra@uah.es}
\date{}          % Enter your date or \today between curly braces
\maketitle

\begin{abstract}
In this paper, we present an algorithm for reparametrizing birational surface parametrizations into birational polynomial surface parametrizations without base points, if they exist. For this purpose, we impose a transversality condition to the base points of the input parametrization.
  \end{abstract}

\noindent
{\it Keywords:} Proper (i.e. birational) parametrization, polynomial parametrization, base point, transversal base point.

\para

\section{Introduction}\label{S-intro}
Algebraic surfaces are mainly studied from three different, but related, points of view, namely: pure theoretical, algorithmic and because of their applications. In this paper, we deal with some computational problems of algebraic surfaces taking into account the potential applicability.

\para

 In many different applications, as for instance in geometric design (see e.g. \cite{HL}) parametric representations of surfaces are more suitable than implicit representations. Among the different types of parametric representations, one may distinguish radical parametrizations (see \cite{SS-radical}) and rational parametrizations (see e.g. \cite{SchichoParam}), the first being tuples of fractions of nested radical of bivariate polynomials, and the second being tuples of fractions of bivariate polynomials; in both cases the tuples are with generic Jacobian of rank 2. Other parametric representations by means of series can be introduced, but this is not within the scope of this paper. One may observe that the set of rational parametrizations is a subclass of the class of radical parametrizations. Indeed, in \cite{SSV17}, one can find an algorithm to decide whether a radical parametrization can be transformed by means of a change of the parameters into a rational parametrization; in this case, we say that a reparametrization has been performed.

\para

Now, we consider a third type of parametric representation of the surface, namely, the polynomial parametrization. That is, tuple of bivariate polynomials with generic Jacobian of rank 2. Clearly the class of polynomial parametrizations is a subclass of the class of the  rational parametrizations, and the natural question of deciding whether a given rational parametrization can be reparametrized into a polynomial parametrization appears. This is, indeed, the problem we deal with in the paper. Unfortunately the inclusion of each of these  classes into the
next one is strict, and hence the corresponding reparametrizations  are not always feasible.
In some practical applications, the alternative is to use piecewise parametrizations with the desired property (see e.g. \cite{MJL2017}, \cite{SGBsupport}).

\para

Before commenting the details of our   approach to the problem, let us look at some reasons why polynomial parametrizations may be more interesting than rational ones. In general,  rational parametrizations are dominant over the surface (i.e. the Zariski closure of its image is the surface), but not necessarily surjective. This may
introduce difficulties when applying the parametric representation to a problem, since the answer might be within the non-covered area of the surface. For the curve case, polynomial parametrizations are always surjective (see \cite{Sendra-Normal}). For the surface case, the result is not so direct but there are some interesting results for polynomial parametrizations to be surjective (see \cite{PSV-normal}) as well as subfamilies of polynomial parametrizations that are surjective (see \cite{SSV15}). Another issue that could be mention is the numerical instability when the values, substituted in the parameters of the parametrizations, get close to the poles of the rational functions; note that, in this case, the denominators define algebraic curves which points are all poles of the parametrization. One may also think on the advantages of providing a polynomial parametrization instead of a rational parametrization, when facing surface integrals. Let us mention  a last example of motivation: the algebra-geometric technique for solving non autonomous ordinary differential equations (see \cite{Graseger}, \cite{NW}). In these cases, the differential equation  is seen algebraically and hence representing a surface. Then, under the assumption that this surface is rational (resp. radical) the general rational (resp. radical) solution, if it exists, of the differential equation is determined from a rational parametrization of the surface. This process may be simplified  if the associated algebraic parametrization admits a polynomial parametrization.

\para

Next, let us introduce, and briefly comment, the notion of base points of a rational  parametrization. A base point of a given rational parametrization is a common solution of all numerators and denominators of the parametrization (see e.g. \cite{CoxPerezSendra2020}, \cite{PSbasePoints}). The presence of this type of points is a serious obstacle when approaching many  theoretical, algorithmic or applied questions   related to the surface represented by the parametrizations; examples of this phenomenon can be found in, e.g., \cite{BCD}, \cite{CGZ2000}, \cite{SSV14}, \cite{SG}. In addition, it happens that rational surface may admit, both, birational parametrizations with empty base locus and with non-empty base locus. Moreover, the behavior of the base locus is not controlled, at least to our knowledge, by the existing parametrization algorithms or when the resulting parametrization appears as the consequence of the intersection of higher dimension varieties, or as the consequence of  cissoid, conchoid, offsetting, or any other geometric design process applied to a surface parametrization (see e.g. \cite{ArrondoSS}, \cite{cissoids}, \cite{SSconchoids}, \cite{JanMiroslavConvolution}).

\para

In this paper, we solve the problem, by means of reparametrizations, of computing a birational polynomial parametrization without base points of a rational surface, if it exists. For this purpose, we assume that we are given a birational parametrization of the surface that has the property of being transversal (this is a notion introduced in the paper, see Section \ref{sec-trasnversal-base-locus} for the precise definition). Essentially, the idea of transversality is to assume that the multiplicity of the base   points is minimal. Since, by definition (see Section \ref{sec-trasnversal-base-locus}) this multiplicity is introduced as a multiplicity of intersection of two algebraic curves, one indeed is requiring the transversality of the corresponding tangents. In this paper, we have not approached the problem of eliminating this hypothesis, and we leave it as future work in case it exists.

\para

The general idea to solve the problem is as follows. We are given a birational parametrization $\cP$ and let $\cQ$ be the searched birational polynomial parametrization without base points; let us say, first of all, that throughout the paper we work projectively. Then, there exists a birational map, say $\cS_\cP$, that relates both parametrizations as $\cQ=\cP\circ \cS_\cP$. Then, taking into account that the base locus of $\cS_\cP$ and $\cP$ are the same, that they coincide also in multiplicity, and applying some additional properties on base points stated  in Subsection \ref{subsec-further-results}, we introduce a $2$-dimensional linear system of curves, associated to an effective divisor generated by the base points of $\cP$. Then, using the transversality we prove that every basis of the linear system, composed with a suitable  birational transformation, provides a reparametrization of $\cP$ that yields to a polynomial parametrization with empty base locus.

\para

The structure of the paper is as follows. In Section \ref{sec-preliminary-base-points}, we introduce the notation and we recall some definitions and properties on base points, essentially taken from \cite{CoxPerezSendra2020}. In Section \ref{sec-trasnversal-base-locus} we state some additional required properties on base points, we introduce the notion of transversality of a base locus, both for birational maps of the projective plane and for rational surface projective parametrizations. Moreover, we establish some fundamental properties that require the transversality. Section \ref{section-properPolRep} is devoted to state the theoretical frame for solving the central problem treated in the paper. In
Section \ref{section-algorithms-examples}, we derive the algorithm that is illustrated by means of some examples. We finish the paper with a section on conclusions.

\section{Preliminary on Basic Points and Notation}\label{sec-preliminary-base-points}

\para

In this section, we briefly recall some of the notions related to base points and we introduce some notation; for further results on this topic we refer to \cite{CoxPerezSendra2020}.
We distinguish three  subsections. In Subsection \ref{subsec-notation}, the notation that will be used throughout the paper is introduced. The next subsection focuses on birational surface parametrizations, and the third subsection on birational maps of the projective plane.

\subsection{Notation}\label{subsec-notation}

Let, first of all, start fixing some notation. Throughout this paper, $\K$ is an algebraically closed field of characteristic zero. $\ox=(x_1,\ldots,x_4), \oy=(y_1,\ldots,y_4)$ and $\ot=(t_1,t_2,t_3)$. $\mathbb{F}$ is the algebraic closure of
$\mathbb{K}(\ox,\oy).$ In addition, $\mathbb{P}^{k}(\K)$ denotes the $k$--dimensional projective space, and $\mathscr{G}(\mathbb{P}^{k}(\K))$ is the set of all projective transformations of $\mathbb{P}^{k}(\K)$.

Furthermore, for a rational map
\[ \begin{array}{cccc}
\cM: & \mathbb{P}^{k_1}(\K) & \dashrightarrow & \mathbb{P}^{k_2}(\K) \\
     &     \oh=(h_1:\cdots:h_{k_{1}+1}) & \longmapsto & (m_1(\oh):\cdots: m_{k_2+1}(\oh)),
\end{array}
\]
where the non-zero $m_i$ are homogenous polynomial in $\oh$ of the same degree, we denote by $\deg(\cM)$ the degree $\deg_{\oh}(m_i)$, for $m_i$ non-zero, and by $\mapdeg(\cM)$ the degree of the map $\cM$; that is,  the cardinality of the generic fiber of $\cM$ (see e.g.\ \cite{Harris:algebraic}).

For $L\in \mathscr{G}(\mathbb{P}^{k_2}(\K))$, and $M\in \mathscr{G}(\mathbb{P}^{k_1}(\K))$ we denote the left composition and the right composition, respectively, by
\[ {}^{L}\!\cM:=L\circ \cM, \,\,\, \cM^{M}:=\cM\circ M.\]

 Let $f\in \mathbb{L}[t_1,t_2,t_3]$ be homogeneous and non-zero, where $\mathbb{L}$ is a field extension of $\K$. Then $\myC(f)$ denotes the projective plane curve defined by $f$ over the algebraic closure of $\mathbb{L}$.

 Let $\myC(f),\myC(g)$ be two curves in $\proj2$. For $A\in \proj2$, we represent by
 $\mult_A(\myC(f),\myC(g))$ the multiplicity of intersection of $\myC(f)$ and $\myC(g)$ at $A$. Also, we denote by $\mult(A,\myC(f))$ the multiplicity of $\myC(f)$ at $A$.

Finally, $\myS\subset \projtres$ represents a rational projective surface.

\subsection{Case of surface parametrizations}\label{subsec-param}

In this subsection, we consider  a
rational parametrization of the projective rational surface $\myS$, namely,
\begin{equation}\label{eq-Param}
\begin{array}{llll}
\mathcal{P}: & \mathbb{P}^{2}(\K) &\dashrightarrow &\myS \subset \mathbb{P}^{3}(\K) \\
& \ot & \longmapsto & (p_1(\ot):\cdots:p_4(\ot)),
\end{array}
\end{equation}
where $\ot=(t_1,t_2,t_3)$ and  the $p_i$ are homogenous polynomials of the same degree such that $\gcd(p_1,\ldots,p_4)=1$.

\para

\begin{definition}\label{def-base-point-P}
 A \textsf{base point of $\mathcal{P}$} is an element  $A\in \mathbb{P}^{2}(\K)$ such that $p_i(A)=0$ for every $i\in \{1,2,3,4\}$.  We denote by $\myB({\cP})$ the set of base points of ${\cP}$. That is $\myB(\cP)=\myC(p_1)\cap \cdots \cap \myC(p_4)$.
 \end{definition}

 \para

In order to deal with the base points of the parametrization, we  introduce the  following auxiliary polynomials:
\begin{equation}\label{eq-W}
\begin{array}{lll}
W_1(\ox,\ot):=\sum_{i=1}^{4} x_i\, p_i(t_1,t_2,t_3)
 \\ \noalign{\smallskip}
 W_2(\oy,\ot):=\sum_{i=1}^{4} y_i\, p_i(t_1,t_2,t_3),
 \end{array}
 \end{equation}
 where $x_i, y_i$ are new variables. We will work with the projective  plane curves $\myC(W_{i})$ in $\mathbb{P}^{3}(\mathbb{F})$.  Similarly, for $M=(M_1:M_2:M_3)\in \myG$, we define,
 \begin{equation}\label{eq-W}
\begin{array}{lll}
W_{1}^{M}(\ox,\ot):=\sum_{i=1}^{4} x_i\, M_i(\cP(\ot))
 \\ \noalign{\smallskip}
 W_{2}^{M}(\oy,\ot):=\sum_{i=1}^{4} y_i\,  M_i(\cP(\ot)).
 \end{array}
 \end{equation}

\para

\begin{remark}\label{remark-notation-Wi}
Some times, we will need to specify the parametrization in the polynomials above. In those cases, we will write $W_{i}^{\cP}$ or $W_{i}^{M,\cP}$    instead of $W_i$ or $W_{i}^{M}$; similarly, we may write $\myC(W_{1}^{\cP})$ and $\myC(W_{1}^{M,\cP})$.
\end{remark}

\para

 Using the multiplicity   of
intersection of these two curves, we define the multiplicity of a base point as follows.

\para

\begin{definition}\label{def-multiplicity-BP}
The \textsf{multiplicity of a
base point} $A\in \myB(\cP)$ is  $\mult_{A}(\myC(W_1),\myC(W_2))$, that is, is the multiplicity of intersection at $A$ of $\myC(W_1)$ and $\myC(W_2)$; we denote it by \begin{equation}\label{eq-multBasePointP}
\mult(A,\myB(\cP)):=\mult_{A}(\myC(W_1),\myC(W_2))
\end{equation}
In addition, we define the \textsf{multiplicity of the base points locus of ${\cP}$},  denoted $\mult(\myB(\cP))$, as
\begin{equation}\label{eq-multBP}
\mult(\myB(\cP)):={\sum_{A\in \myB({\cP})} \mult(A,\myB(\cP))=}
\sum_{A\in \myB({\cP})} \mult_A(\myC(W_{1}),\myC(W_{2})).
\end{equation}
Note that, since $gcd(p_1,\ldots,p_4)=1$, the set $\myB(\cP)$ is either empty of finite.
\end{definition}

\para

\para

\noindent
For the convenience of the reader we recall here some parts of Proposition 2 in \cite{CoxPerezSendra2020}.

\para

\begin{lemma}\label{lemma-BasePointP}
If $L\in \myG$, then:
\begin{enumerate}
\item If $A\in \myB({\cP})$, then $$\mult(A,\myC(W_{1}^{L}))=\mult(A,\myC(W_{2}^{L}))=\min\{\mult(A,\myC(p_i))\,|\, i=1,\ldots,4\}.$$
\item If $A\in \myB({\cP})$, then the tangents to $\myC(W_{1}^{L})$ at $A$ {\rm(}{\hskip-1pt}similarly to $\myC(W_{2}^{L})${\rm)}, with the corresponding multiplicities, are the factors in $\mathbb{K}[\ox,\ot]\setminus \mathbb{K}[\ox]$ of
    \[ \epsilon_1 x_1 T_1+\epsilon_2 x_2 T_2 +\epsilon_3 x_3 T_3+\epsilon_4 x_4 T_4,\]
    where $T_i$ is the product of the tangents, counted with multiplicities, of $\myC(L_i({\cP}))$ at $A$, and where $\epsilon_i=1$ if $\mult(A,\myC(L_i({\cP}))))=\min\{\mult(A,\myC(L_i({\cP})))\,|\, i=1,\ldots,4\}$ and $0$ otherwise.
\end{enumerate}
\end{lemma}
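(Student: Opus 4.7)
The plan is to reduce both assertions to a local computation of the initial (lowest-degree) form of $W_1^L$ and $W_2^L$ at $A$ in an affine chart centered at $A$. Both statements are symmetric in the roles of $\ox$ and $\oy$, so I would treat the $W_1^L$ case in detail and note that the $W_2^L$ case follows verbatim.

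First I would exploit that $L\in \myG$ is a projective transformation, so $L_i(\cP)=\sum_{j=1}^{4} a_{ij}\,p_j$ for an invertible matrix $(a_{ij})$. The lowest-degree Taylor coefficient of $L_i(\cP)$ at $A$ is a $\K$-linear combination of those of the $p_j$, which forces $\mult(A,\myC(L_i\cP))\ge m$ for every $i$, where $m:=\min_j \mult(A,\myC(p_j))$. Applying the same estimate to $L^{-1}$ reverses the inequality at the level of minima and yields
\[
\min_{1\le i\le 4}\mult(A,\myC(L_i\cP))\;=\;\min_{1\le j\le 4}\mult(A,\myC(p_j))\;=\;m.
\]
This is what allows the statement to be phrased in terms of the $p_i$ rather than in terms of the transformed components $L_i(\cP)$, which is what the Taylor analysis naturally produces.

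Next I would write, in local coordinates at $A$,
\[
W_1^L(\ox,\ot)\;=\;\sum_{i=1}^{4} x_i\bigl(T_i+H_i\bigr),
\]
where $T_i$ is the initial form of $L_i(\cP)$ at $A$ (a homogeneous polynomial in the local coordinates whose irreducible factors, counted with multiplicities, are the tangents of $\myC(L_i\cP)$ at $A$) and $H_i$ collects the strictly higher-order terms. Contributions with $\mult(A,\myC(L_i\cP))>m$ live in degree strictly above $m$, so the degree-$m$ part of the Taylor expansion of $W_1^L$ at $A$ equals $\sum_{i}\epsilon_i\, x_i\, T_i$, with $\epsilon_i$ as in the statement. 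The crucial point is that this polynomial is a nonzero element of $\K[\ox,\ot]$: the indeterminates $x_i$ with $\epsilon_i=1$ are $\K[\ot]$-linearly independent and the corresponding $T_i$ are nonzero, so no cancellation can occur. Consequently $\sum_i \epsilon_i x_i T_i$ is genuinely the initial form of $W_1^L$ at $A$, which immediately gives (1).

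For (2), the tangents to $\myC(W_1^L)$ at $A$, counted with multiplicities, are by definition the irreducible factors of the initial form that actually depend on $\ot$; hence they are precisely the factors of $\sum_i \epsilon_i x_i T_i$ lying in $\K[\ox,\ot]\setminus\K[\ox]$, as claimed. The main technical point in the whole argument is the nonvanishing of the initial form identified above; without it, the computed quantity $m$ would only be a lower bound on $\mult(A,\myC(W_1^L))$. However, this nonvanishing is secured once the $x_i$ are treated as fresh indeterminates over $\K[\ot]$, so I do not anticipate any serious obstacle beyond a careful bookkeeping of the local expansion, and no transversality or genericity hypothesis is needed at this stage.
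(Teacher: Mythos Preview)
Your argument is correct. The paper does not actually prove this lemma: it is stated there only ``for the convenience of the reader'' as a restatement of (parts of) Proposition~2 in \cite{CoxPerezSendra2020}, with no proof given in the present text. Your local Taylor-expansion approach, together with the observation that invertibility of $L$ preserves the minimum of the multiplicities $\mult(A,\myC(p_j))$, is exactly the natural way to establish the result from scratch, and your nonvanishing argument for the initial form $\sum_i \epsilon_i x_i T_i$ (using that the $x_i$ are fresh indeterminates and at least one $\epsilon_i=1$) is the key step that turns the obvious lower bound into an equality. In short, you supply a self-contained proof where the paper merely cites one; there is no competing approach in the paper to compare against.
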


\subsection{Case of rational maps of $\proj2$}\label{subsec-proj2}

\para
In this subsection, let
\begin{equation}\label{eq-S}
\begin{array}{cccc}
\cS: & \proj2 &  \dashrightarrow & \proj2   \\
 &\ot=(t_1:t_2:t_3) & \longmapsto & \cS(\ot)=(s_1(\ot):s_2(\ot):s_3(\ot)),
\end{array}
\end{equation}
where  $\gcd(s_1,s_2,s_3)=1$, be a dominant rational transformation of $\proj2$.

\para

\begin{definition}
  $A\in \proj2$ is a \textsf{base point} of $\cS(\ot)$ if $s_1(A)=s_2(A)=s_3(A) = 0$. That is, the
base points of $\cS$ are the intersection points of the projective plane curves, $\myC(s_i)$, defined over $\mathbb{K}$ by $s_i(\ot)$, $i=1,2,3$. We denote by $\myB(\cS)$ the set of base points of $\cS$.
\end{definition}

\para

\noindent
We introduce the polynomials
\begin{equation}\label{eq-V}
\begin{array}{l}
 V_{1}=\sum_{i=1}^{3} x_i\, s_i(\ot) \in \mathbb{K}(\ox,\oy)[\ot] \\
 \noalign{\medskip}
 V_{2}=\sum_{i=1}^{3} y_i \,s_i(\ot)\in
 \mathbb{K}(\ox,\oy)[\ot],
 \end{array}
\end{equation}
where $x_i,y_j$ are new variables and we consider the curves $\myC(V_{i})$ over the field $\mathbb{F}$; compare with \eqref{eq-W}. Similarly, for every $L\in \myGS$  we introduce the polynomials
\begin{equation}\label{eq-VL}
\begin{array}{l}
 V_{1}^{L}=\sum_{i=1}^{3} x_i\,L_i(\cS) \in \mathbb{K}(\ox,\oy)[\ot] \\
 \noalign{\medskip}
 V_{2}^{L}=\sum_{i=1}^{3} y_i \,L_i(\cS)\in
 \mathbb{K}(\ox,\oy)[\ot],
 \end{array}
\end{equation}

\para

\begin{remark}\label{remark-notation-Vi}
Some times, we will need to specify the rational map in the polynomials above. In those cases, we will write $V_{i}^{\cS}$ or $V_{i}^{L,\cS}$    instead of $V_i$ or $V_{i}^{L}$; similarly, we may write $\myC(V_{1}^{\cS})$ and $\myC(V_{1}^{L,\cS})$.
\end{remark}

\para

As we did in Subsection \ref{subsec-param}, we have the following notion of multiplicity.

\para

\begin{definition}\label{definition-BS}
For $A\in \myB(\cS)$, we define the \textsf{multiplicity of intersection of $A$}, and we denote it by $\mult(A,\myB(\cS))$, as
\begin{equation}\label{eq-DefmultPbaseS}
\mult(A,\myB(\cS)):=\mult_{A}(\myC(V_{1}),\myC(V_2)).
\end{equation}
In addition, we define the \textsf{multiplicity of the base points locus of ${\cS}$}, denoted $\betas$, as (note that, since $\gcd(s_1,s_2,s_3)=1$, $\myB(\cS)$ is either finite or empty)
\begin{equation}\label{eq-DefmultBS}
\betas:=
{\sum_{A\in \myB({\cS})} \mult(A,\myB(\cS))=}\sum_{A\in \myB({\cS})} \mult_A(\myC(V_{1}),\myC(V_{2}))
\end{equation}
\end{definition}

\para

\para

Next result is a direct extension of Proposition 2 in \cite{CoxPerezSendra2020} to the case of birational transformation of $\proj2$.

\para

\begin{lemma}\label{lemma-BasePointS}
If $L\in \myGS$ then
\begin{enumerate}
\item $\myB(\cS)=\myC(V_{1}^{L}) \cap \myC(V_{2}^{L})\cap \proj2$.
\item Let $A\in \myB(\cS)$ then $$\mult(A,\myC(V_{1}^{L}))=\mult(A,\myC(V_{2}^{L}))=\min\{\mult(A,\myC(s_i))\,|\, i=1,2,3\}.$$
\item Let $A\in \myB(\cS)$. The tangents to $\myC(V_{1}^{L})$ at $A$ (similarly to $\myC(V_{2}^{L})$), with the corresponding multiplicities, are the factors in $\mathbb{K}[\ox,\ot]\setminus \mathbb{K}[\ox]$ of
    \[ \epsilon_1 x_1 T_1+\epsilon_2 x_2 T_2 +\epsilon_3 x_3 T_3,\]
    where $T_i$ is the product of the tangents, counted with multiplicities, of $\myC(L_i(\cS))$ at $A$, and where $\epsilon_i=1$ if $\mult(A,\myC(L_i(\cS))))=\min\{\mult(A,\myC(L_i(\cS)))\,|\, i=1,2,3\}$ and 0 otherwise.
\end{enumerate}
\end{lemma}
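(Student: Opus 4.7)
The plan is to mirror, in this setting of birational maps of $\proj2$, the argument that establishes Lemma \ref{lemma-BasePointP} (Proposition 2 of \cite{CoxPerezSendra2020}). Structurally, the only change is that $\cS$ has three homogeneous components instead of four, so each sum over $i$ runs from $1$ to $3$; the substantive work takes place entirely in a Taylor expansion around a fixed base point.

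For part (1), the invertibility of $L\in\myGS$ means that $(L_1(\cS),L_2(\cS),L_3(\cS))$ and $(s_1,s_2,s_3)$ span the same $\K$-linear subspace of $\K[\ot]$. Hence $s_j(A)=0$ for all $j$ iff $L_i(\cS)(A)=0$ for all $i$. Because the $x_i,y_i$ are independent indeterminates, $V_1^L(A,\ox)$ vanishes identically iff every $L_i(\cS)(A)=0$, and likewise for $V_2^L(A,\oy)$. Combining these equivalences yields $\myB(\cS)=\myC(V_1^L)\cap\myC(V_2^L)\cap\proj2$.

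For parts (2) and (3) I would work locally at a fixed $A\in\myB(\cS)$. Let $m_i:=\mult(A,\myC(L_i(\cS)))$ and let $T_i$ denote the tangent-cone form of $\myC(L_i(\cS))$ at $A$, so that in local coordinates
\[
L_i(\cS)=T_i+(\text{terms of order}>m_i),
\]
with $T_i$ homogeneous of degree $m_i$. Because $L$ is invertible, each $L_i(\cS)$ is a $\K$-linear combination of the $s_j$ and vice versa, which gives $\min_i m_i=\min_j\mult(A,\myC(s_j))=:m$. Substituting the expansions into $V_1^L=\sum_i x_i L_i(\cS)$, the lowest order homogeneous part of $V_1^L$ at $A$ is
\[
\sum_{i:\,m_i=m}x_i T_i=\epsilon_1 x_1 T_1+\epsilon_2 x_2 T_2+\epsilon_3 x_3 T_3,
\]
with $\epsilon_i$ exactly as in the statement. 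Since the $x_i$ are independent variables and at least one $\epsilon_i T_i$ is nonzero (by definition of $m$), this form is not identically zero, so it is the genuine tangent cone of $\myC(V_1^L)$ at $A$. This yields $\mult(A,\myC(V_1^L))=m$ and identifies the tangents of $\myC(V_1^L)$ at $A$ as its irreducible factors in $\K[\ox,\ot]\setminus\K[\ox]$, with the proper multiplicities. The argument for $\myC(V_2^L)$ is identical with $\oy$ replacing $\ox$.

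The only subtle point is the identification $\min_i m_i=\min_j\mult(A,\myC(s_j))$: for an individual $L_i(\cS)$ we only get $m_i\geq \min_j\mult(A,\myC(s_j))$, since cancellations among leading forms of the $s_j$ may raise the multiplicity. What rescues the equality of the two minima is precisely the bijectivity of $L$, which lets one recover each $s_j$ as a $\K$-linear combination of the $L_k(\cS)$'s and so forces the reverse inequality as well. Beyond this, everything reduces to elementary algebra with leading forms, exactly as in the four-variable case proved in \cite{CoxPerezSendra2020}.
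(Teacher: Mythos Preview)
Your proposal is correct and matches the paper's own treatment: the paper gives no explicit proof of this lemma, stating only that it is a ``direct extension of Proposition 2 in \cite{CoxPerezSendra2020} to the case of birational transformation of $\proj2$,'' which is precisely the adaptation you carry out. Your handling of the one genuine subtlety---that invertibility of $L$ forces $\min_i\mult(A,\myC(L_i(\cS)))=\min_j\mult(A,\myC(s_j))$---is the point that deserves a sentence, and you supply it.
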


\section{Transversal Base Locus}\label{sec-trasnversal-base-locus}

\para

In this section, we present some new  results on base points that complement those in \cite{CoxPerezSendra2020} and we introduce and analyze the notion of transversality in conexion with the base locus.

\para
Throughout this section, let $\cS=(s_1:s_2:s_3)$, with $\gcd(s_1,s_2,s_3)=1$, be as in \eqref{eq-S}. In addition, in the sequel, we assume that $\cS$  is birational. Let the inverse  of $\cS$ be denoted  by $\cR=(r_1:r_2:r_3)$; that is $\cR:=\cS^{-1}$. Also, we consider a rational surface parametrization $\cP=(p_1:\cdots:p_4)$, with $\gcd(p_1,\ldots,p_4)=1$, be as in \eqref{eq-Param}. We assume that $\cP$ is birational.

\subsection{Further results on base points}\label{subsec-further-results}

We start analyzing the rationality of the curve $\myC(V_{i}^{L})$ (see \eqref{eq-VL}).

\para

\begin{lemma}\label{lemma-RatV1V2}  There exists a non-empty open subset $\Omega_1$ of $\myGS$ such that if $L \in \Omega_1$ then $\myC(V_{1}^{L})$ is a rational curve. Furthermore,
\[ \cV_1(\ox,h_1,h_2)=\cR^{L^{-1}}(h_1x_3,h_2x_3,-(h_1x_1+x_2h_2)) \]
is a birational parametrization of $\myC(V_{1}^{L})$.
\end{lemma}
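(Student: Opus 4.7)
The plan is to give a geometric interpretation of $\myC(V_1^L)$ and then build $\cV_1$ as the composition of a parametrization of a line, the projective isomorphism $L^{-1}$, and the birational map $\cR$.

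First, I would interpret the curve. Writing $V_1^L(\ot)=\sum_{i=1}^{3} x_i\,L_i(\cS(\ot))$ and letting $\Lambda_\ox \subset \proj2$ be the line $\{(u_1{:}u_2{:}u_3)\in\proj2 : x_1 u_1+x_2 u_2+x_3 u_3=0\}$ (defined over $\K(\ox)$), a point $\ot$ lies on $\myC(V_1^L)$ exactly when $L(\cS(\ot))\in \Lambda_\ox$. Equivalently, up to indeterminacy, $\myC(V_1^L)=\cR(L^{-1}(\Lambda_\ox))$, the image under the birational map $\cR$ of the line $L^{-1}(\Lambda_\ox)$.

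Second, I would parametrize this preimage. A direct substitution shows that the linear map
\[
\ell(h_1{:}h_2)\;:=\;\bigl(\,h_1 x_3\,:\,h_2 x_3\,:\,-(h_1 x_1+x_2 h_2)\,\bigr)
\]
sends $\mathbb{P}^1$ onto $\Lambda_\ox$, since $x_1(h_1 x_3)+x_2(h_2 x_3)+x_3(-(h_1 x_1+x_2 h_2))=0$, and it is birational because it has projective degree one. Since $L^{-1}\in \myGS$ is a projective automorphism, $L^{-1}\circ \ell$ is a birational parametrization of the line $L^{-1}(\Lambda_\ox)$. Composing with $\cR$ yields the candidate $\cV_1=\cR\circ L^{-1}\circ \ell=\cR^{L^{-1}}\circ \ell$, matching the stated formula.

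Third, I would verify that $\cV_1$ lands in $\myC(V_1^L)$. Substituting $\ot=\cV_1(\ox,h_1,h_2)$ into $V_1^L$ and invoking $\cS\circ \cR=\mathrm{id}$ gives $L(\cS(\ot))=\ell(h_1,h_2)$, whence $V_1^L(\ot)=x_1(h_1 x_3)+x_2(h_2 x_3)+x_3(-(h_1 x_1+x_2 h_2))=0$ identically.

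Finally, to isolate $\Omega_1$ and conclude birationality, I would take $\Omega_1$ to be the subset of $\myGS$ for which the moving line $L^{-1}(\Lambda_\ox)$ is in sufficiently general position with respect to the finite base locus $\myB(\cR)$, treating $\ox$ as parameters and working over $\K(\ox)$. This is a non-empty Zariski-open condition on $L$, and for such $L$ the restriction of the birational map $\cR$ to $L^{-1}(\Lambda_\ox)$ remains generically one-to-one; hence $\cV_1$ is a birational parametrization of $\myC(V_1^L)$, and in particular $\myC(V_1^L)$ is rational. The main obstacle is precisely this last step: one must confirm that $\mapdeg(\cV_1)=1$, i.e., that $\cR$ neither collapses nor multiply-covers the moving line $L^{-1}(\Lambda_\ox)$; everything else follows almost formally from the identity $\cS\circ\cR=\mathrm{id}$ and the linearity of $\ell$.
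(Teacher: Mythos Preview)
Your approach is essentially the same as the paper's: both interpret $\myC(V_1^L)$ as the preimage under $\cS$ of a moving line (the paper calls it $\myC(R^L)$, which coincides with your $L^{-1}(\Lambda_\ox)$), and both obtain $\cV_1$ by pushing a linear parametrization of that line through $\cR^{L^{-1}}$. Two points, however, need tightening.

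First, you never argue that $V_1^L$ is irreducible. Your computation only shows that the image of $\cV_1$ is \emph{contained} in $\myC(V_1^L)$; without irreducibility you cannot conclude that the closure of the image is all of $\myC(V_1^L)$ rather than a single component. The paper handles this up front: since $\gcd(s_1,s_2,s_3)=1$ and the three linear forms in $\ox$ appearing as coefficients (coming from the rows of $L$) are coprime, $V_1^L$ is irreducible for every $L\in\myGS$. This step is short but essential, and it should appear before you claim $\cV_1$ parametrizes the whole curve.

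Second, your description of $\Omega_1$ as ``general position with respect to the finite base locus $\myB(\cR)$'' is too weak. The set where the birational extension of $\cR$ over $\mathbb{F}$ fails to be bijective is a closed set $\Delta=\Delta_1\cup\Delta_2$ where $\Delta_1$ may contain finitely many \emph{curves} (contracted by $\cS$), not just the finite set $\myB(\cR)$. What one actually needs is that the moving line $L^{-1}(\Lambda_\ox)$ is not contained in $\Delta_1$. The paper makes this explicit by taking the remainder of the defining polynomial of $\Delta_1$ upon division by $R^{\cL}$ and extracting a nonvanishing coefficient condition $\alpha(\oz^L)\beta(\oz^L)\neq 0$; this is the open condition defining $\Omega_1$. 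Once the line meets the bijectivity locus $\cU_\ox$ densely, the restriction of $\cR$ to it is generically one-to-one, which disposes of the obstacle you flagged at the end.
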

\begin{proof} We start proving that for every $L\in \myGS$, $V_{1}^{L}$ is irreducible. Indeed, let
$L=(\sum \lambda_i t_i: \sum \mu_i t_i: \sum \gamma_i t_i)\in \myGS$. Then $V_{1}^{L}=
(\lambda_1 x_1+\mu_1 x_2+ \gamma_1 x_3) s_1+(\lambda_2 x_1+\mu_2 x_2+ \gamma_2 x_3) s_2+
(\lambda_3 x_1+\mu_3 x_2+ \gamma_3 x_3) s_3$.  $\gcd(s_1,s_2,s_3)=1$ and
$\gcd(\lambda_1 x_1+\mu_1 x_2+ \gamma_1 x_3, \lambda_2 x_1+\mu_2 x_2+ \gamma_2 x_3, \lambda_3 x_1+\mu_3 x_2+ \gamma_3 x_3)=1$ because the determinant of the matrix associated to $L$ is non-zero. Therefore, $V_{1}^{L}$ is irreducible.

In the following, to define the open set $\Omega_1$, let $\cL(t_1,t_2,t_3)=(\cL_1:\cL_2:\cL_3)$ be a generic element of $\myGS$; that is, $\cL_i=z_{i,1} t_1+ z_{i,2} t_2+ z_{i,3} t_3$, where $z_{i,j}$ are undetermined coefficients satisfying that the determinant of the corresponding matrix is not zero. Furthermore, for     $L\in \myGS$, we denote by $\oz^L$ the coefficient list of $L$.
We also introduce the polynomial $R^{\cL}=x_{1} \cL^1+x_{2} \cL^{2} +x_3 \cL^3=(\sum z_{i,1}x_i) t_1+(\sum z_{i,2}x_i) t_2+(\sum z_{i,3}x_i) t_3$. Similarly, for $L\in \myGS$, we denote $R^{L}=R_{\cL}(\oz^L,\ox,\ot)$.

 We consider the birational extension $\cR_{\ox}: \mathbb{P}^{2}(\mathbb{F}) \dashrightarrow  \mathbb{P}^{2}(\mathbb{F})$ of $\cR$ from $\proj2$ to $\mathbb{P}^{2}(\mathbb{F})$. Let $\cU_{\ox} \subset \mathbb{P}^{2}(\mathbb{F}) $ be the  open set where the $\cR_{\ox}$ is bijective; say that $\cU_{\ox}=\mathbb{P}^{2}(\mathbb{F}) \setminus \Delta$. We express the close set $\Delta$ as $\Delta=\Delta_1\cup \Delta_2$ where $\Delta_1$ is either empty or it is a union of finitely many curves, and $\Delta_2$ is either empty or finite many points. We fix our attention in $\Delta_1$. Let $f(\ot)$ be the defining polynomial of $\Delta_1$. Let $Z(\oz,t_1,t_2)$ be the remainder of $f$ when diving by $V_{1}^{\cL}$ w.r.t $t_3$. Note that $R^{\cL}$ does not divide $f$ since $R^{\cL}$ is irreducible and depends on $\oz$. Hence $Z$ is no zero. Let $\alpha(\oz)$ be the numerator of a non-zero coefficient of $Z$ w.r.t. $\{t_1,t_2\}$ and let $\beta(\oz)$ the l.c.m. of the denominators of all coefficients of $Z$ w.r.t. $\{t_1,t_2\}$. Then,
we define $\Omega_1$ as
\[ \Omega_{1}=\{L\in \myGS \,|\, \alpha(\oz^L)\beta(\oz^L)\neq 0\} \]
We observe that, by construction, if $L\in \Omega_1$ then $\myC(R^L)\cap \cU_{\ox}$ is  dense in
$\myC(R^L)$.

Let $\overline{a},\overline{b}\in \myC(R^L)\cap \cU_{\ox}$ be two different points, then by injectivity $\cR_{\ox}(\myC(R^L))$ contains at least two points, namely $\cR_{\ox}(\overline{a})$ and $\cR_{\ox}(\overline{b})$. In this situation, since $\cR_{\ox}(\myC(R^L))$ and $\myC(V_1)$ are irreducible we get that $\overline{\cR_{\ox}(\myC(R^L))}=\myC(V_1)$, and hence $\myC(V_1)$ is a rational curve $\mathbb{P}^2(\mathbb{F})$.  Furthermore, one easily may check that $\cV_1$ parametrizes $\myC(V_{1})$  and it is proper since  $\cR$ is birational.
\end{proof}

\para

 \begin{remark}\label{rem-V1V2}
   Note that $\cV_1(t_3,0,-t_1,t_1, t_2)=\cRL(-t_1t_1,-t_1t_2,-t_1t_3)=\cRL(\ot)$. Hence
   \[ \LcS(\cV_1(t_3,0,-t_1,t_1, t_2))=\LcS(\cRL(\ot))=(t_1:t_2:t_3). \]
 Therefore,
   \[L_i(\cS(\cV_1(t_3,0,-t_1,t_1, t_2))=t_i\cdot \wp(\ot),\,i=1,2,3.\]
 \end{remark}

\para

Next Lemma analyzes the rationality of the curves $\myC(L_i(\cS))$ where $L=(L_1:L_2:L_3)\in \myGS$.

\para

\begin{lemma}\label{lemma-rat-Si}
There exists a non-empty Zariski open subset $\Omega_2$ of $\myGS$ such that if $L\in \myGS$ then  $\myC(L_i(\cS))$, where $i\in\{1,2,3\}$, is rational.
\end{lemma}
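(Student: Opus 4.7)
The plan is to parametrize each curve $\myC(L_i(\cS))$ by pulling back, via the inverse map $\cR=\cS^{-1}$, a linear parametrization of the line $\myC(L_i)\subset \proj2$ in the target copy of $\proj2$. This adapts the pullback-through-the-inverse strategy used in Lemma \ref{lemma-RatV1V2}, but now applied to an honest line in the target plane rather than to the intermediate polynomials $V_1^L$.

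Fix $i\in\{1,2,3\}$. Writing $L=(L_1:L_2:L_3)\in\myGS$ with $L_i=\lambda_{i,1}t_1+\lambda_{i,2}t_2+\lambda_{i,3}t_3$, choose a linear parametrization $\ell_i:\mathbb{P}^1\dashrightarrow \myC(L_i)$ (solve $L_i=0$ for one of the $t_j$'s in terms of the other two, and use those as homogeneous parameters). Then define
\[ \mu_i(h_1,h_2):=\cR(\ell_i(h_1,h_2)). \]
On the open subset of $\proj2$ where $\cS\circ\cR=\mathrm{id}$ we have $\cS(\mu_i(h_1,h_2))=\ell_i(h_1,h_2)\in\myC(L_i)$, hence $L_i(\cS(\mu_i(h_1,h_2)))=L_i(\ell_i(h_1,h_2))=0$ identically. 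Therefore the image closure of $\mu_i$ is contained in $\myC(L_i(\cS))$.

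Next I would identify the open set $\Omega_2$ by imposing three genericity conditions, in complete parallel with the construction of $\Omega_1$ in Lemma \ref{lemma-RatV1V2}: (a) the line $\myC(L_i)$ is not a component of the (closed) non-defining locus of $\cR$, so that $\mu_i$ is a well-defined rational map; (b) the open set $\cU\subset \proj2$ on which $\cR$ is a bijection meets $\myC(L_i)$ in a dense subset, which holds for every $L$ such that $\myC(L_i)$ is not contained in the complement $\proj2\setminus\cU$ (a union of finitely many curves and points, so a closed proper condition on the coefficients $\lambda_{i,j}$); and (c) $\myC(L_i(\cS))$ is irreducible. Under (a)–(b), the restriction of $\cR$ to $\myC(L_i)$ is generically injective, so $\mu_i$ is a proper (birational) parametrization of its image closure, and that image closure is an irreducible rational curve in $\proj2$. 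Intersecting the resulting open subsets of $\myGS$ for $i=1,2,3$ gives $\Omega_2$.

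The main obstacle is condition (c): ensuring that, for generic $L$, the curve $\myC(L_i(\cS))$ has no extra components besides the image of $\mu_i$. This is a Bertini-type statement for the linear system on $\proj2$ pulled back from the pencil of lines in the target via the birational map $\cS$. I would handle it by a degree comparison: the image of $\mu_i$ is an irreducible component of $\myC(L_i(\cS))$ whose degree, by the degree formula $\deg(L_i(\cS))=\mapdeg(\mu_i)\cdot \deg(\text{image})$ together with $\mapdeg(\mu_i)=1$, equals $\deg(\cS)=\deg(L_i(\cS))$; hence the image closure must already fill $\myC(L_i(\cS))$. As in the proof of Lemma \ref{lemma-RatV1V2}, the concrete open set $\Omega_2$ can be exhibited explicitly by writing $L$ in coefficients $\overline{z}^{\,L}$, taking the remainder of the defining polynomials of the relevant bad loci modulo $L_i(\cS)$, and requiring the non-vanishing of a suitable numerator/denominator polynomial in $\overline{z}^{\,L}$.
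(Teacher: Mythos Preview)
Your overall strategy matches the paper's: restrict the inverse $\cR$ to the line $\myC(L_i)$ in the target plane and use that restriction to parametrize $\myC(L_i(\cS))$. The paper's construction of $\Omega_2$ is even simpler than what you sketch---it only excludes those $L$ for which some $\myC(L_i)$ coincides with one of the finitely many lines contained in $\proj2\setminus\cU$, where $\cU$ is the bijectivity domain of $\cR$---and then concludes $\overline{\cR(\myC(L_i))}=\myC(L_i(\cS))$ by the same ``two irreducible curves with an inclusion'' step used at the end of the proof of Lemma~\ref{lemma-RatV1V2}.

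Your handling of condition (c), however, has a genuine gap. The degree formula for a rational curve parametrization reads $\deg(\mu_i)=\mapdeg(\mu_i)\cdot\deg(\text{image})$, not $\deg(L_i(\cS))=\mapdeg(\mu_i)\cdot\deg(\text{image})$ as you write. Here $\deg(\mu_i)$ is the common degree of the (simplified) components of $\mu_i=\cR\circ\ell_i$, which a priori equals $\deg(\cR)$ minus whatever drops from a common factor. To identify $\deg(\mu_i)$ with $\deg(L_i(\cS))=\deg(\cS)$ you need two further ingredients: first, $\deg(\cR)=\deg(\cS)$, which is nontrivial and is proved only later in the paper as Lemma~\ref{lema-degSR}; second, that the line $\myC(L_i)$ misses the finite base locus $\myB(\cR)$, so that the components of $\cR\circ\ell_i$ acquire no common factor. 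Both conditions are easily absorbed into $\Omega_2$, but as written your degree count does not establish them, so the conclusion that the image already fills $\myC(L_i(\cS))$ is not yet justified.
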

\begin{proof}
Let $\cU$ be the open subset where $\cR$ is a bijective map, and let
$\{ \rho_{j,1}t_1+\rho_{j,2} t_2+\rho_{j,3} t_3\}_{j=1,\dots,n}$ be the linear forms defining the lines, if any, included in $\proj2\setminus \cU$. Then, we take $\Omega_2=\cap_{j=1}^{n} \Sigma_j$
where
\[ \Sigma_j=\left\{\left. \left(\sum \lambda_i t_i: \sum \mu_i t_i: \sum \gamma_i t_i\right)\in \myGS \,\right|\,
\begin{array}{l} (\lambda_{1}:\lambda_{2}:\lambda_{3})\neq (\rho_{j,1}:\rho_{j,2}:\rho_{j,3}), \\
(\mu_{1}:\mu_{2}:\mu_{3})\neq (\rho_{j,1}:\rho_{j,2}:\rho_{j,3}), \\
(\gamma_{1}:\gamma_{2}:\gamma_{3})\neq (\rho_{j,1}:\rho_{j,2}:\rho_{j,3})
\end{array} \right\}.
\]
Now, let $L=(L_1:L_2:L_3)\in \Omega_2$. By construction, $\myC(L_i)\cap \cU$ is dense in $\myC(L_i)$, for $i\in \{1,2,3\}$. In this situation, reasoning as in the last part of the proof of Lemma \ref{lemma-RatV1V2}, we get that $\overline{\cR(\myC(L_i))}=\myC(L_i(\cS))$. Therefore, $\myC(L_i(\cS))$ is rational.
\end{proof}

\para

\noindent
The following lemma follows from Lemma \ref{lemma-BasePointS}.

\para

\begin{lemma}\label{lemmaBsBLs}
If $L\in \myGS$ then
\begin{enumerate}
\item  $\myB(\cS)=\myB(\LcS)$.
\item For $A\in \myB(\cS)$ it holds that $\mult(A,\myB(\cS))=\mult(A,\myB(\LcS))$.
\item $\mult(\myB(\cS))=\mult(\myB(\LcS))$.
\end{enumerate}
\end{lemma}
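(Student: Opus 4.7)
My plan is to reduce all three claims to the simple structural identity
\[ V_i^{\LcS}(\cdot,\ot) \;=\; V_i^{L,\cS}(\cdot,\ot), \qquad i=1,2, \]
where the left-hand side is formed as in \eqref{eq-V} for the map $\LcS$ (i.e.\ the case $L=\mathrm{id}$ in \eqref{eq-VL} applied to $\LcS$) and the right-hand side is as in \eqref{eq-VL} for $\cS$. These identities are immediate by expanding $\sum x_i(\LcS)_i = \sum x_i L_i(\cS)$ and similarly in $\oy$. Before using them I would first check that $\LcS$ satisfies the standing hypotheses of Subsection \ref{subsec-proj2}: invertibility of the matrix of $L$ forces the $\gcd$ of the components of $\LcS$ to equal $\gcd(s_1,s_2,s_3)=1$, and $\LcS$ is dominant as a composition of dominant maps, so Definition \ref{definition-BS} and Lemma \ref{lemma-BasePointS} are available for $\LcS$.

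For part (1) I would apply Lemma \ref{lemma-BasePointS}.1 twice, once to $\cS$ with the transformation $L$ and once to $\LcS$ with the identity. The two applications together with the structural identity give
\[ \myB(\cS) \;=\; \myC(V_1^{L,\cS})\cap \myC(V_2^{L,\cS})\cap\proj2 \;=\; \myC(V_1^{\LcS})\cap \myC(V_2^{\LcS})\cap\proj2 \;=\; \myB(\LcS). \]
For part (2), Definition \ref{definition-BS} applied to $\LcS$ together with the structural identity yields $\mult(A,\myB(\LcS)) = \mult_A(\myC(V_1^{L,\cS}), \myC(V_2^{L,\cS}))$, so the task reduces to proving
\[ \mult_A(\myC(V_1^\cS), \myC(V_2^\cS)) \;=\; \mult_A(\myC(V_1^{L,\cS}), \myC(V_2^{L,\cS})). \]
Writing $L = (L_{ij})$, one has $V_1^{L,\cS}(\ox,\ot) = \sum_k \bigl(\sum_i L_{ik}x_i\bigr)s_k(\ot) = V_1^\cS(\tilde\ox,\ot)$ with $\tilde x_k := \sum_i L_{ik} x_i$, and analogously $V_2^{L,\cS}(\oy,\ot) = V_2^\cS(\tilde\oy,\ot)$. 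The substitution $(\ox,\oy)\mapsto(\tilde\ox,\tilde\oy)$ is an invertible $\K$-linear change of coordinates, so it extends to a $\K$-algebra automorphism of $\K(\ox,\oy)$ and then to a field automorphism $\tilde\phi$ of the algebraic closure $\mathbb{F}$. Acting coefficient-wise on $\mathbb{F}[\ot]$, $\tilde\phi$ sends the pair $(V_1^\cS,V_2^\cS)$ to $(V_1^{L,\cS}, V_2^{L,\cS})$ while fixing $\K$ pointwise and therefore fixing the coordinates of $A\in\proj2$; since intersection multiplicity at a point fixed by an automorphism of the coefficient field is invariant, the required equality follows. Part (3) is then obtained by summing (2) over the finite set $\myB(\cS) = \myB(\LcS)$.

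The main obstacle I anticipate is exactly the invariance step in (2): Lemma \ref{lemma-BasePointS} describes only the multiplicity of each individual curve $\myC(V_i^L)$ at $A$ and its tangents, not the intersection number $\mult_A(\myC(V_1^L),\myC(V_2^L))$, so neither its part (2) nor its part (3) is enough on its own. The substitution-plus-field-automorphism argument above is what bridges that gap, and the crucial observation is that the change of variables lives entirely in the coefficient field $\mathbb{F}$ and does not touch $\ot$, so it extends to an automorphism of $\mathbb{F}$ fixing $\K$, which is precisely what is needed to preserve intersection multiplicity at the $\K$-point $A$.
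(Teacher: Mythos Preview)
Your proposal is correct. The main difference from the paper is in scope and self-containedness: the paper disposes of (1) by the bare matrix argument (if $s_i(A)=0$ for all $i$ then $L_i(\cS)(A)=0$, and conversely $L(\cS(A))=\overline{0}$ forces $\cS(A)=\overline{0}$ by invertibility of $L$), and then simply cites Theorem~5 of \cite{CoxPerezSendra2020} for (2) and (3). You instead route everything through the single structural identity $V_i^{\LcS}=V_i^{L,\cS}$ and then supply, for (2), a genuine argument via the coefficient-field automorphism of $\mathbb{F}$ induced by the linear change $(\ox,\oy)\mapsto(\tilde\ox,\tilde\oy)$. That automorphism argument is sound: the induced map on the local quotient $\mathcal{O}_{\mathbb{P}^2,A}/(V_1,V_2)$ is only $\phi$-semilinear, but semilinear bijections preserve $\mathbb{F}$-dimension, so the intersection number is unchanged. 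What your route buys is a self-contained proof that does not depend on the external reference; what the paper's route buys is brevity, and the matrix argument for (1) is slightly more direct than passing through Lemma~\ref{lemma-BasePointS}(1).
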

\begin{proof}
(1) Let $A\in \myB(\cS)$ then $s_1(A)=s_2(A)=s_3(A)=0$. Thus, $L_1(\cS)(A)=L_2(\cS)(A)=L_3(\cS)(A)=0$. So,
$A\in \myB(\LcS)$. Conversely, let $A\in \myB(\LcS)$. Then expressing $L(\cS(A))=\overline{0}$ in terms of matrices, since $L$ is invertible, we have that $\cS(A)=L^{-1}(0,0,0)=(0,0,0)$. Thus, $A\in \myB(\cS)$. \\
(2) and (3) follows from Theorem 5 in \cite{CoxPerezSendra2020}. %Corollary \textcolor{blue}{**}
%follows from  Lemma \ref{lemma-BasePointS} (2). (3) is a consequence of (2).
\end{proof}

\para

\para

\para

\begin{lemma}\label{lemma-singularlocus} There exists a non-empty Zariski open subset $\Omega_3$ of $\myGS$ such that if $L\in  \Omega_3$ then for every $A\in \myB(\cS)$ it holds that
 $$\mult(A,\myC(V_{1}^{L}))=\mult(A,\myC(L_1(\cS)))=\mult(A,\myC(L_2(\cS)))=\mult(A,\myC(L_3(\cS))).$$
  \end{lemma}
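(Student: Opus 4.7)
The plan is to fix, for each base point $A \in \myB(\cS)$, the integer $m_A := \min\{\mult(A, \myC(s_j)) \mid j = 1, 2, 3\}$, and then show that for $L$ in a suitable non-empty Zariski open subset of $\myGS$ all four multiplicities in the statement coincide with $m_A$. Since $\gcd(s_1,s_2,s_3)=1$ and $\cS$ is birational (so $\myB(\cS)$ is finite), it will suffice to build the open set as a finite intersection of open conditions, one per pair $(A,i)$.

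The first observation is that Lemma \ref{lemma-BasePointS}(2) already gives $\mult(A,\myC(V_1^L)) = m_A$ for \emph{every} $L \in \myGS$, without any genericity hypothesis. Indeed, writing $L=(\lambda_{ij})$,
\[
V_1^L \;=\; \sum_{i=1}^3 x_i L_i(\cS) \;=\; \sum_{j=1}^3 \Bigl(\sum_{i=1}^3 \lambda_{ij} x_i\Bigr)\, s_j,
\]
and since $L$ is invertible, the $y_j:=\sum_i \lambda_{ij}x_i$ form an independent set of indeterminates. So $V_1^L$ has the same shape as $V_1$ (with $\oy$-type coefficients), and Lemma \ref{lemma-BasePointS}(2) applies verbatim to yield $m_A$. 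Consequently the whole lemma reduces to establishing: for a generic $L$ and every $A \in \myB(\cS)$, $i\in\{1,2,3\}$, one has $\mult(A,\myC(L_i(\cS))) = m_A$.

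For a fixed $(A,i)$, the argument is a standard genericity computation. Each $L_i(\cS)=\sum_{j=1}^3 \lambda_{ij} s_j$ is a $\K$-linear combination of $s_1,s_2,s_3$, so its multiplicity at $A$ is at least $m_A$. Passing to local affine coordinates at $A$ and letting $s_j^{[m_A]}$ be the degree-$m_A$ part of the Taylor expansion of $s_j$, at least one $s_j^{[m_A]}$ is non-zero (by definition of $m_A$). The condition $\mult(A,\myC(L_i(\cS)))>m_A$ is then equivalent to the single linear equation $\sum_j \lambda_{ij} s_j^{[m_A]}=0$ in the row coefficients of $L_i$, and this is a \emph{proper} linear condition: if $j_0$ realises the minimum, taking any $L \in \myGS$ with $L_i = t_{j_0}$ (and the other two rows chosen to keep $L$ invertible) gives $L_i(\cS) = s_{j_0}$ with multiplicity $m_A$ exactly. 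Hence
\[
\Omega_{A,i} \;:=\; \{\, L \in \myGS \mid \mult(A,\myC(L_i(\cS))) = m_A\,\}
\]
is a non-empty Zariski open subset of $\myGS$.

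Defining $\Omega_3 := \bigcap_{A \in \myB(\cS)} \bigcap_{i=1}^3 \Omega_{A,i}$ is a finite intersection of non-empty opens, hence non-empty open, and for $L \in \Omega_3$ the three equalities $\mult(A,\myC(L_i(\cS)))=m_A$ ($i=1,2,3$) hold simultaneously with $\mult(A,\myC(V_1^L))=m_A$ from the first step, giving the claim. There is no real obstacle in this proof; the only point that requires a touch of care is verifying that the linear condition written in local coordinates at $A$ is genuinely proper (i.e.\ that some $s_j^{[m_A]}\neq 0$), which is immediate from the choice of $m_A$ as the minimum.
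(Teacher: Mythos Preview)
Your proof is correct and follows essentially the same approach as the paper's own argument: both invoke Lemma \ref{lemma-BasePointS}(2) to get $\mult(A,\myC(V_1^L))=m_A$ for all $L$, and then show that for generic $L$ each $L_i(\cS)$ has multiplicity exactly $m_A$ at $A$ by exhibiting a proper closed condition on the row coefficients of $L$. The only cosmetic difference is that the paper singles out one specific $m_A$-th order partial derivative $\partial^{m_A}$ with $\partial^{m_A}s_{j_0}(A)\neq 0$ and uses the non-vanishing of $g_i(\oz):=\partial^{m_A}\cL_i(\cS)(A)=\sum_j z_{i,j}\,\partial^{m_A}s_j(A)$ to define $\Omega_A$, whereas you work with the entire degree-$m_A$ Taylor part $\sum_j \lambda_{ij}s_j^{[m_A]}$; these define possibly different but equally valid non-empty open sets.
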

\begin{proof}
Let $A\in \myB(\cS)$. Then, by Lemma \ref{lemma-BasePointS} (2), we have that
\begin{equation}\label{eq-mA} m_A:=\mult(A,\myC(V_{1}^{L}))=\min\{ \mult(A,\myC(s_i)\,|\,i\in \{1,2,3\} \}, \,\,\forall \,\, L\in \myGS.
\end{equation}
Let us assume w.l.o.g. that the minimum above is reached for $i=1$. Then all $(m_A-1)$--order derivatives of the forms  $s_i$ vanish at $A$, and there exists an $m_A$--order partial derivative of $s_1$ not vanishing at $A$. Let us denote this partial derivative as $\partial^{m_A}$.

Now, let $\cL$ be as in the proof of Lemma \ref{lemma-RatV1V2}.  Then,
\[ g_i(\oz):=\partial^{m_A} \cL_i(\cS)(A)=z_{i,1} \partial^{m_A} s_1 (A)+ z_{i,2} \partial^{m_A} s_2(A)+ z_{i,3} \partial^{m_A} s_3(A)\in \K[\oz] \]
is a non-zero polynomial because $\partial^{m_A} s_1 (A)\neq 0$. We then consider the open subset (see
proof of Lemma \ref{lemma-RatV1V2} for the notation $\oz^{L}$)
\[ \Omega_{A}=\{ L\in \myGS\,|\, g_1(\oz^L)g_2(\oz^L)g_3(\oz^L)\neq 0\}\neq \emptyset.  \]
In this situation, we take
\[ \Omega_3=\bigcap_{A\in \myB(\cS)} \Omega_{A} \]
Note that, since $\myB(\cS)$ is finite then $\Omega_3$ is open. Moreover, since $\myGS$ is irreducible then $\Omega_3$ is not empty.

Let us prove that $\Omega_3$ satisfies the property in the statement of the lemma. Let $L\in \Omega_3$ and $A\in \myB(\cS)$. Let $m_A$ be as in \eqref{eq-mA}.
Then all partial derivatives of $L_i(\cS)$, of any order smaller than $m_A$, vanishes at $A$. Moreover, since $L\in \Omega_3\subset  \Omega_A$, it holds that $\partial^{m_A} L_i(\cS)(A)\neq 0$ for $i=1,2,3$.
Therefore,
\[ \mult(A,\myC(V_{1}^{L}))=m_A=\mult(A,\myC(L_1(\cS))=\mult(A,\myC(L_2(\cS))=\mult(A,\myC(L_3(\cS)) \]
\end{proof}

\para

\begin{remark}\label{rem-BPBPL}
We note that the proofs of Lemma \ref{lemmaBsBLs} and Lemma \ref{lemma-singularlocus}
are directly adaptable to the case of birational surface parametrizations. So, both lemmas hold  if  $M\in \myG$ and we replace $\cS$ by the birational surface parametrizaion $\cP$ and $\LcS$ by
${}^{M}\!\cP:=M\circ \cP$.
\end{remark}

\para

In the following, we denote by $\sing({\cal D})$, the set of singularities of an algebraic plane curve $\cal D$.

\para

\begin{corollary}\label{cor-tgs}
Let $\Omega_3$ be the open subset in Lemma \ref{lemma-singularlocus} and $L\in \Omega_3$. It holds that
\begin{enumerate}
\item $\cap_{i=1}^{3} \sing(\myC(L_i(\cS))) \cap \myB(\cS)\subset \sing(\myC(V_{1}^{L}))$.
\item Let $A\in \myB(\cS)$. The tangents to $\myC(V_{1}^{L})$ at $A$, with the corresponding multiplicities, are the factors in $\mathbb{K}[\ox,\ot]\setminus \mathbb{K}[\ox]$ of
    \[  x_1 T_1+x_2 T_2 + x_3 T_3,\]
    where $T_i$ is the product of the tangents, counted with multiplicities, to $\myC(L_i(\cS))$ at $A$.
\item Let $A\in \myB(\cS)$, and let $T_i$ be the product of the tangents, counted with multiplicities, to $\myC(L_i(\cS))$, at $A$. If $\gcd(T_1,T_2,T_3)=1$, then
    \[ \mult_A(\myC(V_{1}^{L}),\myC(V_{2}^{L}))=\mult(A,\myC(L_i(\cS))^2,\,\, i\in \{1,2,3\} \]
\end{enumerate}
\end{corollary}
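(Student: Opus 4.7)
The plan is to handle the three items in sequence, each time exploiting Lemma~\ref{lemma-singularlocus}, which guarantees that for $L\in\Omega_3$ and $A\in\myB(\cS)$ the quantity $m_A:=\mult(A,\myC(V_{1}^{L}))$ equals $\mult(A,\myC(L_i(\cS)))$ for every $i\in\{1,2,3\}$.

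Item (1) is immediate: if $A\in\bigcap_{i=1}^{3}\sing(\myC(L_i(\cS)))\cap\myB(\cS)$ then $m_A=\mult(A,\myC(L_i(\cS)))\ge 2$, so $A\in\sing(\myC(V_{1}^{L}))$. For item (2), I would apply Lemma~\ref{lemma-BasePointS}(3), which describes the tangent cone as $\sum_i\epsilon_i x_i T_i$ with $\epsilon_i=1$ exactly when $\mult(A,\myC(L_i(\cS)))$ realises the minimum of the three. Since by Lemma~\ref{lemma-singularlocus} the three multiplicities coincide, $\epsilon_1=\epsilon_2=\epsilon_3=1$, and the formula collapses to $x_1T_1+x_2T_2+x_3T_3$, as claimed.

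Item (3) is the substantive step. I intend to invoke the classical plane-curve fact $\mult_A(\myC,\myC')\ge\mult(A,\myC)\cdot\mult(A,\myC')$, with equality if and only if the tangent cones at $A$ share no common linear factor. Writing $\tilde{t}_1,\tilde{t}_2$ for local coordinates at $A$, item (2) identifies the tangent cones of $\myC(V_{1}^{L})$ and $\myC(V_{2}^{L})$ at $A$ as
\[
\tau_1=x_1T_1+x_2T_2+x_3T_3,\qquad \tau_2=y_1T_1+y_2T_2+y_3T_3,
\]
each of degree $m_A$. Combining this with Lemma~\ref{lemma-BasePointS}(2), which yields $\mult(A,\myC(V_{i}^{L}))=m_A$ for $i=1,2$, reduces the whole problem to showing that $\tau_1$ and $\tau_2$ share no linear factor in $\mathbb{F}[\tilde{t}_1,\tilde{t}_2]$.

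The key observation, which I expect to be the main obstacle, is that $\tau_1$ lives in $\K[\ox][\tilde{t}_1,\tilde{t}_2]$ and hence splits completely into linear forms already over $\overline{\K(\ox)}$, while $\tau_2$ splits over $\overline{\K(\oy)}$; any common linear factor over $\mathbb{F}$ must therefore be defined over $\overline{\K(\ox)}\cap\overline{\K(\oy)}=\K$, the last equality resting on the algebraic independence of $\ox$ and $\oy$. A $\K$-linear form $\ell=\alpha\tilde{t}_1+\beta\tilde{t}_2$ dividing $\tau_1=\sum_i x_i T_i$ must, by the $\K$-linear independence of $x_1,x_2,x_3$, divide each $T_i$ separately, contradicting $\gcd(T_1,T_2,T_3)=1$. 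Once this descent from $\mathbb{F}$ to $\K$ is justified, the intersection-multiplicity identity $\mult_A(\myC(V_{1}^{L}),\myC(V_{2}^{L}))=m_A^2=\mult(A,\myC(L_i(\cS)))^2$ follows at once for every $i\in\{1,2,3\}$, completing the proof.
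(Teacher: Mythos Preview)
Your argument is correct and, for items (1) and (2), identical to the paper's. For item (3) the overall strategy also matches: identify the tangent cones $\tau_1=\sum x_iT_i$ and $\tau_2=\sum y_iT_i$, show they have no common factor, and conclude via the standard intersection--multiplicity identity.

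The only difference is how the coprimality of $\tau_1$ and $\tau_2$ is established. You pass to linear factors over $\overline{\K(\ox)}$ and $\overline{\K(\oy)}$ and invoke $\overline{\K(\ox)}\cap\overline{\K(\oy)}=\K$, which is correct but, as you note yourself, needs justification. The paper instead views $\tau_1,\tau_2\in\K[\tilde t_1,\tilde t_2][\ox,\oy]$: since $\gcd(T_1,T_2,T_3)=1$, each $\tau_i$ is primitive (content $1$) as a polynomial in $\ox,\oy$ over $\K[\tilde t_1,\tilde t_2]$; any common divisor has degree $0$ in $\ox$ (it divides $\tau_2$) and in $\oy$ (it divides $\tau_1$), hence lies in $\K[\tilde t_1,\tilde t_2]$ and must divide each $T_i$, forcing it to be a unit. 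This Gauss--lemma route avoids the field--intersection step entirely and is shorter, while your splitting--field argument is a valid alternative that makes the ``descent to $\K$'' explicit.
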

\begin{proof} \
(1)  Let $A\in \cap_{i=1}^{3} \sing(\myC(L_i(\cS))) \cap \myB(\cS)$. By Lemma \ref{lemma-singularlocus}, $m:=\mult(A,\myC(L_i(\cS)))>0$, for $i\in \{1,2,3\}$, and $\mult(A,\myC(V_{1}^{L}))=m>0$. So, $A\in \sing(\myC(V_{1}^{L}))$. \\
(2) follows from Lemma \ref{lemma-singularlocus} and Lemma \ref{lemma-BasePointS}. \\
(3) By (2) the tangents to $\myC(V_{1}^{L})$ and to $\myC(V_{2}^{L})$ at $A$ are $\mathcal{T}_1:=\sum x_i T_i$ and $\mathcal{T}_2:=\sum y_i T_i$, respectively. Since $\gcd(T_1,T_2,T_3)=1$, then $\mathcal{T}_i$ is primitive, and hence $\gcd(\mathcal{T}_1,\mathcal{T}_2)=1$. That is, $\myC(V_{1}^{L})$ and $\myC(V_{2}^{L})$ intersect transversally at $A$. From here, the results follows.
 \end{proof}

\subsection{Transversality}

We start introducing the notion of transversality for birational maps of $\proj2$.

\para

\begin{definition}\label{def-transversal}
 We say that $\cS$ is \textsf{transversal} if either $\myB(\cS)=\emptyset$ or
 for every $A\in \myB(\cS)$ it holds that (see \eqref{eq-V})
\[ \mult(A,\myB(\cS))=\mult(A,\myC(V_1))^2 \]
In this case, we also say that the \textsf{base locus of $\cS$ is transversal}.
\end{definition}

\para

{In the following  lemma, we see that the transversality is invariant under left composition with elements in $\myGS$.

\para

\begin{lemma}\label{lemma-transSSL}
If $\cS$ is transversal, then for every  $L\in \myGS$ it holds that $\LcS$ is transversal.
\end{lemma}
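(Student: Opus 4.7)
The plan is to reduce everything to two previously established facts: that left composition with $L \in \myGS$ preserves both the base locus (as a set) and the multiplicity of each base point, and that the multiplicity of a base point on the auxiliary curve $\myC(V_1)$ depends only on the forms $s_i$, not on $L$.

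First I would dispose of the trivial case: if $\myB(\cS)=\emptyset$, then Lemma \ref{lemmaBsBLs}(1) gives $\myB(\LcS)=\emptyset$, and $\LcS$ is transversal by definition. So assume $\myB(\cS)\neq \emptyset$ and fix $A\in \myB(\cS)$.

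Next, I would unravel the notation: the polynomial that plays the role of $V_1$ for the map $\LcS$ is, by Remark \ref{remark-notation-Vi}, exactly $V_1^{\LcS}=\sum_{i=1}^3 x_i\, L_i(\cS)$, which coincides with $V_1^{L}$ of \eqref{eq-VL}. Now Lemma \ref{lemma-BasePointS}(2) applied twice — once with $L$ equal to the identity, once with the given $L$ — yields
\[
\mult(A,\myC(V_1))=\min_i\mult(A,\myC(s_i))=\mult(A,\myC(V_1^L))=\mult(A,\myC(V_1^{\LcS})).
\]
Combined with Lemma \ref{lemmaBsBLs}(1)--(2), which gives $A\in \myB(\LcS)$ and $\mult(A,\myB(\LcS))=\mult(A,\myB(\cS))$, the transversality of $\cS$ then propagates directly:
\[
\mult(A,\myB(\LcS))=\mult(A,\myB(\cS))=\mult(A,\myC(V_1))^2=\mult(A,\myC(V_1^{\LcS}))^2.
\]
Since $A\in \myB(\LcS)$ was arbitrary, this is exactly the transversality condition for $\LcS$.

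There is no serious obstacle here; the entire argument is a bookkeeping exercise in which the two key inputs are the invariance of the base locus multiplicity under left composition (Lemma \ref{lemmaBsBLs}) and the characterization of $\mult(A,\myC(V_1^L))$ as the minimum of the multiplicities of the $\myC(s_i)$ at $A$ (Lemma \ref{lemma-BasePointS}(2)). The only subtlety worth flagging in the write-up is the notational point that $V_1^{L}$, $V_1^{L,\cS}$ and $V_1^{\LcS}$ all denote the same polynomial, so that comparing ``the $V_1$ of $\cS$'' with ``the $V_1$ of $\LcS$'' is legitimate in the definition of transversality.
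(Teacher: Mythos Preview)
Your proof is correct and follows essentially the same route as the paper's: both arguments combine Lemma~\ref{lemmaBsBLs}(1)--(2) for the invariance of the base locus and its multiplicities under left composition with Lemma~\ref{lemma-BasePointS}(2) to identify $\mult(A,\myC(V_1))$ with $\mult(A,\myC(V_1^{L,\cS}))$ via the common value $\min_i\mult(A,\myC(s_i))$. Your version is slightly more explicit about the notational identification $V_1^{\LcS}=V_1^{L,\cS}=V_1^L$, but the logical structure is identical.
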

\begin{proof}
By Lemma   \ref{lemmaBsBLs} (1), $\myB(\cS)=\myB(\LcS)$.  So, if  $\myB(\cS)=\emptyset$, there is nothing to prove. Let $A\in \myB(\cS)\neq \emptyset$, and let
 $L:=(L_1:L_2:L_3).$   Then
\[ \begin{array}{cclr}
\mult(A,\myB(\LcS))& =& \mult(A,\myB(\cS)) & \text{(see Lemma \ref{lemmaBsBLs} (2))} \\
& =& \mult(A,\myC(V_1))^2 & \text{($\cS$ is transversal)}  \\
& =& \mult(A,\myC(V_{1}^{L,\cS}))^2 & \text{(see Lemma \ref{lemma-BasePointS} (2) and Remark \ref{remark-notation-Vi})}  \\
\end{array}\]
Therefore, $\LcS$ is transversal.
\end{proof}

\para

The next lemma characterizes the transversality by means of the tangents of $\myC(s_i)$ at the base points.

\para
\begin{lemma}\label{Lemma-trasgcd} The following statements are equivalent
\begin{enumerate}
\item $\cS$ is transversal.
\item For every $A \in \myB(\cS)$ it holds that $\gcd(T_{1}, T_{2}, T_3)=1$, where $T_i$ is the product of the tangents, counted with multiplicities, to $\myC(s_i)$ at $A$.
\end{enumerate}
\end{lemma}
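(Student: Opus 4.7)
The plan is to characterize transversality pointwise through the Bezout lower bound for intersection multiplicities, then to express the tangent cones of $\myC(V_1)$ and $\myC(V_2)$ at each base point in terms of the $T_i$. So fix $A\in\myB(\cS)$ (the case $\myB(\cS)=\emptyset$ being trivial) and set $m_A:=\min_i\mult(A,\myC(s_i))$. By Lemma \ref{lemma-BasePointS}(2), $\mult(A,\myC(V_1))=\mult(A,\myC(V_2))=m_A$, so the transversality condition $\mult(A,\myB(\cS))=\mult(A,\myC(V_1))^2$ becomes $\mult_A(\myC(V_1),\myC(V_2))=m_A^2$. The classical Bezout-type bound $\mult_A(\myC(f),\myC(g))\ge \mult_A(\myC(f))\cdot \mult_A(\myC(g))$ with equality iff the two curves share no tangent at $A$ then shows that transversality at $A$ is equivalent to $\myC(V_1)$ and $\myC(V_2)$ having no common tangent at $A$.

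Next, by Lemma \ref{lemma-BasePointS}(3) applied with $L$ the identity, the tangents to $\myC(V_1)$ at $A$ are the factors in $\mathbb{K}[\ox,\ot]\setminus\mathbb{K}[\ox]$ of
\[
F_1:=\epsilon_1 x_1T_1+\epsilon_2 x_2T_2+\epsilon_3 x_3T_3,
\]
where $\epsilon_i=1$ if $\mult(A,\myC(s_i))=m_A$ and $\epsilon_i=0$ otherwise, and analogously the tangents of $\myC(V_2)$ come from $F_2:=\sum_i \epsilon_i y_i T_i$. The key algebraic step is then to show that $F_1$ and $F_2$ share a $\ot$-dependent factor over $\overline{\mathbb{F}}$ precisely when $\gcd(\{T_i:\epsilon_i=1\})\neq 1$. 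Any such common factor $\ell$ must lie, up to a scalar in $\mathbb{F}$, in $\K[\ot]$, because $F_1$ has coefficients in $\K[\ox]$ while $F_2$ has coefficients in $\K[\oy]$, and the intersection of $\overline{\K(\ox)}$ and $\overline{\K(\oy)}$ inside $\mathbb{F}$ is $\K$; specializing the $x_i$ to the standard basis shows $\ell\mid T_i$ for every $i$ with $\epsilon_i=1$, while the converse is immediate.

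Combining these three observations, transversality at $A$ is equivalent to $\gcd(\{T_i:\epsilon_i=1\})=1$. The implication $(1)\Rightarrow(2)$ follows at once, since $\gcd(T_1,T_2,T_3)$ divides $\gcd(\{T_i:\epsilon_i=1\})$. For the converse $(2)\Rightarrow(1)$ one has to bridge the partial-index gcd and the full three-index gcd, and this is the main obstacle. I would overcome it by the invariance argument: by Lemma \ref{lemma-transSSL} one may replace $\cS$ by $\LcS$ for a generic $L\in\Omega_3$, and then Lemma \ref{lemma-singularlocus} forces $\mult(A,\myC(L_i(\cS)))=m_A$ for $i=1,2,3$, so after composition $\epsilon_i\equiv 1$ and the partial and full gcds coincide. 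A short linear-algebra check identifying the span of the tangent products $T_i^{L,\cS}=\sum_j\lambda_{ij}T_j$ with the span of the $T_j$ then ensures that $\gcd(T_1^{L,\cS},T_2^{L,\cS},T_3^{L,\cS})=1$ for generic $L$ is equivalent to $\gcd(T_1,T_2,T_3)=1$, closing the equivalence.

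The delicate point is precisely Step four, where the tangent formula of Lemma \ref{lemma-BasePointS}(3) only involves those $T_i$ with minimal multiplicity, so a direct argument yields transversality in terms of $\gcd(\{T_i:\epsilon_i=1\})$ rather than the full $\gcd(T_1,T_2,T_3)$ in the statement; the reconciliation via Lemma \ref{lemma-transSSL} and Lemma \ref{lemma-singularlocus} is what makes the two conditions actually equivalent.
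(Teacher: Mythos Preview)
Your approach mirrors the paper's: both reduce transversality at $A$ to the absence of common tangents of $\myC(V_1)$ and $\myC(V_2)$ via the equality case of the intersection-multiplicity bound, and both invoke the tangent description of Lemma~\ref{lemma-BasePointS}(3). The paper simply applies the reduction of Lemmas~\ref{lemma-transSSL} and~\ref{lemma-singularlocus} at the outset so that all $\epsilon_i=1$ and the tangent cone of $\myC(V_1)$ at $A$ is $x_1T_1+x_2T_2+x_3T_3$; you instead carry the $\epsilon_i$ through and arrive at the sharper intermediate characterization $\gcd(\{T_i:\epsilon_i=1\})=1$, which is a genuine clarification.

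The gap is in your reconciliation step. The formula $T_i^{L,\cS}=\sum_j\lambda_{ij}T_j$ is only valid when every $s_j$ already has multiplicity exactly $m_A$ at $A$. If $\mult(A,\myC(s_j))>m_A$ for some $j$, the contribution of $s_j$ to the degree-$m_A$ homogeneous part of $L_i(\cS)$ at $A$ vanishes, and the correct identity is
\[
T_i^{L,\cS}=\sum_{j:\,\epsilon_j=1}\lambda_{ij}T_j.
\]
Hence the span of the $T_i^{L,\cS}$ equals the span of $\{T_j:\epsilon_j=1\}$, and one obtains $\gcd(T_1^{L,\cS},T_2^{L,\cS},T_3^{L,\cS})=\gcd(\{T_j:\epsilon_j=1\})$, not $\gcd(T_1,T_2,T_3)$. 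Your linear-algebra check therefore shows only that condition~(2) for $\LcS$ coincides with your partial-gcd condition for $\cS$; it does not establish that $\gcd(T_1,T_2,T_3)=1$ forces the partial gcd to be~$1$, so the implication $(2)\Rightarrow(1)$ is not closed. The paper's own ``w.l.o.g.'' via Lemma~\ref{lemma-transSSL} is equally terse on exactly this point---it invokes invariance of condition~(1) under left composition but says nothing about invariance of condition~(2)---so you have correctly isolated the delicate step even though your proposed resolution does not go through as written.
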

\begin{proof}  If $\myB(\cS)=\emptyset$, the result if trivial. Let $\myB(\cS)\neq \emptyset$.  First of all, we observe that, because of Lemma \ref{lemma-transSSL}, we may assume w.l.o.g. that Lemma \ref{lemma-singularlocus}  applies to $\cS$. So, by Definition \ref{def-transversal}, $\cS$ is transversal  if and only if for every $A \in \myB(\cS)$ it holds that
\[\mult(A, \myB(\cS))= \mult(A, \myC(V_1))^2,\]
and, by Definition  \ref{def-multiplicity-BP}, if and only if
\[\mult(A, \myC(V_1))^2=\mult_A(\myC(V_{1}),\myC(V_{2})).\]
 Furthermore, using Theorem 2.3.3 in \cite{SWP}, we have that
\[\mult_A(\myC(V_{1}),\myC(V_{2}))=\mult(A,\myC(V_{1}))\,\mult(A,\myC(V_{2}))\] if and only $V_1$ and $V_2$ intersect transversally
at $A$ i.e. if the curves have no common tangents at $A$ which is equivalent to $\gcd(T_1, T_2, T_3)=1$.
The proof finishes taking into account that, by Lemma  \ref{lemma-singularlocus} $\mult(A,\myC(V_{1}))=\mult(A,\myC(V_{2}))$.
\end{proof}

\para

In the last part of this section, we analyze the relationship of the transversality of a birational map of the projective plane and the transversality of a birational projective surface parametrization. For this purpose, first we introduce the notion of transversality for parametrizations.

\para

\begin{definition}\label{def-transversalP}
Let $\cP$ be a birational surface parametrization of $\projtres$.  We say that $\cP$ is \textsf{transversal} if either $\myB(\cP)=\emptyset$ or for every $A\in \myB(\cP)$ it holds that (see \eqref{eq-W})
\[ \mult(A,\myB(\cP))=\mult(A,\myC(W_1))^2 \]
In this case, we say that the \textsf{base locus of $\cP$ is transversal}.
\end{definition}

\para

\noindent
We start with some technical lemmas.

\para

\begin{lemma}\label{lemma-transPMP}
If $\cP$ is transversal, then for every  $M\in \myGS$ it holds that ${}^{M}\!\cP$ is transversal.
\end{lemma}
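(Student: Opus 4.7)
The plan is to mirror the proof of Lemma \ref{lemma-transSSL}, now for birational projective surface parametrizations rather than for birational maps of $\proj2$. All the ingredients needed are already in place: Remark \ref{rem-BPBPL} guarantees that Lemmas \ref{lemmaBsBLs} and \ref{lemma-singularlocus} transfer verbatim from the $\proj2$ setting to the parametrization setting, with ${}^{M}\!\cP:=M\circ \cP$ playing the role of $\LcS$, and Lemma \ref{lemma-BasePointP} plays the role of Lemma \ref{lemma-BasePointS}.

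First I would invoke the parametrization version of Lemma \ref{lemmaBsBLs}\,(1) (via Remark \ref{rem-BPBPL}) to conclude that $\myB({}^{M}\!\cP)=\myB(\cP)$. If this common set is empty, then by Definition \ref{def-transversalP} the parametrization ${}^{M}\!\cP$ is transversal and there is nothing more to prove. Otherwise, I would fix an arbitrary $A\in \myB(\cP)\neq\emptyset$ and aim to establish the identity $\mult(A,\myB({}^{M}\!\cP))=\mult(A,\myC(W_{1}^{M,\cP}))^{2}$.

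For such an $A$, the desired equality would follow by chaining three steps:
\[
\mult(A,\myB({}^{M}\!\cP))\ =\ \mult(A,\myB(\cP))\ =\ \mult(A,\myC(W_{1}^{\cP}))^{2}\ =\ \mult(A,\myC(W_{1}^{M,\cP}))^{2}.
\]
The first equality comes from the parametrization version of Lemma \ref{lemmaBsBLs}\,(2) (Remark \ref{rem-BPBPL}); the second is precisely the transversality hypothesis on $\cP$ (Definition \ref{def-transversalP}); and the third follows from Lemma \ref{lemma-BasePointP}\,(1), which shows that both $\mult(A,\myC(W_{1}^{\cP}))$ and $\mult(A,\myC(W_{1}^{M,\cP}))$ coincide with $\min\{\mult(A,\myC(p_i))\mid i=1,\dots,4\}$, and therefore with each other.

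I do not anticipate any real obstacle here, since the whole argument is a translation of the proof of Lemma \ref{lemma-transSSL} through the dictionary supplied by Remark \ref{rem-BPBPL} and Lemma \ref{lemma-BasePointP}. The only point requiring a moment of care is to apply Lemma \ref{lemma-BasePointP}\,(1) with $M$ belonging to the projective transformation group for which the lemma is stated (that is, $\myG$, the group of projective transformations of $\projtres$, since ${}^{M}\!\cP=M\circ\cP$ must make sense as a composition).
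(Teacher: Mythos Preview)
Your proposal is correct and matches the paper's approach exactly: the paper's proof reads in its entirety ``The proof is analogous to the proof of Lemma \ref{lemma-transSSL}'', and you have spelled out that analogy precisely via Remark \ref{rem-BPBPL} and Lemma \ref{lemma-BasePointP}. Your closing remark is also well taken --- for ${}^{M}\!\cP=M\circ\cP$ to be defined one needs $M\in\myG$, so the $\myGS$ in the statement is evidently a typo.
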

\begin{proof}
 The proof is analogous to the proof of Lemma \ref{lemma-transSSL}.
\end{proof}

\para

\para
\begin{lemma}\label{Lemma-traPgcd}  The following statements are equivalent
\begin{enumerate}
\item $\cP$ is transversal.
\item For every $A \in \myB(\cP)$ it holds that $\gcd(T_{1}, \ldots, T_4)=1$, where $T_i$ is the product of the tangents, counted with multiplicities, to $\myC(p_i)$ at $A$.
\end{enumerate}
\end{lemma}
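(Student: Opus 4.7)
The proof plan is to mimic verbatim the proof of Lemma~\ref{Lemma-trasgcd}, substituting every ingredient about the birational map $\cS$ of $\proj2$ with its parametrization analog for $\cP$. Concretely, I would organize the argument along four steps.

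First, the trivial case: if $\myB(\cP)=\emptyset$, both statements hold vacuously. So assume $\myB(\cP)\neq \emptyset$ and fix $A\in\myB(\cP)$. Next, I would invoke Lemma~\ref{lemma-transPMP} together with Remark~\ref{rem-BPBPL} (which extends Lemma~\ref{lemma-singularlocus} to parametrizations) to reduce, without loss of generality, to the situation in which $\mult(A,\myC(W_1))=\mult(A,\myC(p_i))$ for $i=1,\ldots,4$. This reduction is what forces every $\epsilon_i$ in Lemma~\ref{lemma-BasePointP}(2) to equal $1$, so that the tangents to $\myC(W_1)$ at $A$ are precisely the factors in $\K[\ox,\ot]\setminus\K[\ox]$ of $\cT_1:=x_1T_1+x_2T_2+x_3T_3+x_4T_4$, and, symmetrically, the tangents to $\myC(W_2)$ at $A$ are the factors in $\K[\oy,\ot]\setminus\K[\oy]$ of $\cT_2:=y_1T_1+y_2T_2+y_3T_3+y_4T_4$.

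With this setup, I would string together the following equivalences. By Definition~\ref{def-transversalP}, $\cP$ is transversal if and only if $\mult(A,\myB(\cP))=\mult(A,\myC(W_1))^2$. By Definition~\ref{def-multiplicity-BP} and Lemma~\ref{lemma-BasePointP}(1) (which gives $\mult(A,\myC(W_1))=\mult(A,\myC(W_2))$), this is equivalent to
\[
\mult_A(\myC(W_1),\myC(W_2))=\mult(A,\myC(W_1))\cdot\mult(A,\myC(W_2)).
\]
By Theorem~2.3.3 in \cite{SWP}, the latter equality is equivalent to $\myC(W_1)$ and $\myC(W_2)$ intersecting transversally at $A$, that is, to having no common tangent at $A$.

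Finally, I would translate the ``no common tangent'' condition into the $\gcd$ condition on the $T_i$. Since $\cT_1\in \K[\ox,\ot]$ and $\cT_2\in \K[\oy,\ot]$ share no variables outside $\ot$, any common factor of $\cT_1$ and $\cT_2$ must lie in $\K[\ot]$, and such a factor exists precisely when $\gcd(T_1,T_2,T_3,T_4)\neq 1$. Thus absence of common tangents is equivalent to $\gcd(T_1,T_2,T_3,T_4)=1$, closing the equivalence. The step I expect to be the main obstacle is the ``without loss of generality'' reduction, where one must argue carefully that the $\gcd$ condition on the tangent cones $T_i$ of $\myC(p_i)$ is unaffected by replacing $\cP$ by ${}^{M\!}\cP$ with $M$ in the appropriate open subset of $\myG$ from the parametrization version of Lemma~\ref{lemma-singularlocus}; once the multiplicities are equalized, the tangent cones of the $M_i(\cP)$ are invertible linear combinations of those of the $p_i$ of the same order, so primitivity is preserved, exactly as in the proof of Lemma~\ref{Lemma-trasgcd}.
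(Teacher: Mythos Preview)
Your proposal is correct and follows precisely the same approach as the paper, which simply states that the proof is analogous to that of Lemma~\ref{Lemma-trasgcd}. You have in fact written out explicitly the parametrization analog of that argument---including the reduction via Lemma~\ref{lemma-transPMP} and Remark~\ref{rem-BPBPL}, the use of Definition~\ref{def-transversalP}, Lemma~\ref{lemma-BasePointP}, and Theorem~2.3.3 in \cite{SWP}---so the two coincide.
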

\begin{proof}
 The proof is analogous to the proof of Lemma \ref{Lemma-trasgcd}.
\end{proof}

\para

 The following lemma focusses on the behavior of the base points of $\cP$ when right composing with elements in $\myGS$.

\para

\begin{lemma}\label{lemma-basePbasePL}
Let $L\in \myGS$. It holds that
\begin{enumerate}
\item  $\myB(\cP)=L(\myB(\cP^{L}))$. Furthermore, $A\in \myB(\cP)$ if and only $L^{-1}(A)\in \myB(\cP^L)$.
\item  For $A\in \myB(\cP)$, $\mult(A,\myB(\cP))=\mult(L^{-1}(A),\myB(\cP^{L}))$
\item $\mult(\myB(\cP))=\mult(\myB(\cP^{L}))$.
\item If $\cP$ is transversal then $\cP^L$ is also transversal.
\end{enumerate}
\end{lemma}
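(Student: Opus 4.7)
The plan is to push everything through the $L$-equivariance of the auxiliary polynomials $W_i$. The key identity to establish at the outset is
$$W_{1}^{\cP^L}(\ox,\ot) \;=\; \sum_{i=1}^{4} x_i\, p_i(L(\ot)) \;=\; W_{1}^{\cP}(\ox, L(\ot)),$$
and analogously for $W_2$. Since $L \in \myGS$ is a linear isomorphism of $\proj2$, composition with $L$ preserves $\gcd$, so $\gcd(p_1\circ L,\ldots,p_4\circ L)=1$ and $\cP^L$ is a bona fide parametrization whose base locus is well-defined in the sense of Definition \ref{def-base-point-P}.

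For (1), the equivalence $A \in \myB(\cP^L) \Leftrightarrow L(A) \in \myB(\cP)$ is immediate from $(p_i\circ L)(A)=p_i(L(A))$, which yields both the set equality $\myB(\cP)=L(\myB(\cP^L))$ and the pointwise characterization $A\in\myB(\cP) \Leftrightarrow L^{-1}(A)\in\myB(\cP^L)$. For (2), I invoke the standard fact that a projective change of coordinates induces an isomorphism of the local rings at corresponding points and therefore preserves all intersection multiplicities; applied to the displayed identity for $W_1, W_2$ above, this gives
$$\mult_{A}(\myC(W_{1}^{\cP}),\myC(W_{2}^{\cP})) \;=\; \mult_{L^{-1}(A)}(\myC(W_{1}^{\cP^L}),\myC(W_{2}^{\cP^L})),$$
which, unwinding Definition \ref{def-multiplicity-BP}, is exactly $\mult(A,\myB(\cP))=\mult(L^{-1}(A),\myB(\cP^L))$. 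Part (3) then follows by summing the identity in (2) across the bijection $\myB(\cP)\to\myB(\cP^L)$, $A\mapsto L^{-1}(A)$, furnished by (1).

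For (4), the same $L$-equivariance applied to a single curve together with Lemma \ref{lemma-BasePointP}(1) yields $\mult(A,\myC(W_1^{\cP}))=\mult(L^{-1}(A),\myC(W_1^{\cP^L}))$. Combining this with (2) and the transversality hypothesis,
$$\mult(L^{-1}(A),\myB(\cP^L)) \;=\; \mult(A,\myB(\cP)) \;=\; \mult(A,\myC(W_1^{\cP}))^2 \;=\; \mult(L^{-1}(A),\myC(W_1^{\cP^L}))^2,$$
so $\cP^L$ satisfies Definition \ref{def-transversalP}.

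The only slightly delicate point is the invariance of intersection multiplicities under a projective change of coordinates acting on the $\ot$-variables (with $\ox,\oy$ acting as parameters in the coefficient field $\mathbb{F}$). Because $L$ is genuinely a linear isomorphism of $\proj2$ and $\ox,\oy$ play no role in the substitution $\ot\mapsto L(\ot)$, this reduces to the classical invariance statement and is really the only step that requires justification; everything else is a direct application of the preceding lemmas and the definitions.
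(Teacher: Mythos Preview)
Your proof is correct and follows essentially the same approach as the paper: both identify that $\myC(W_{i}^{\cP^L})$ is the image of $\myC(W_{i}^{\cP})$ under $L^{-1}$ and then invoke invariance of (intersection) multiplicities under projective change of coordinates. The only noteworthy difference is in part~(4): the paper appeals to the tangent-gcd characterization of transversality (Lemma~\ref{Lemma-traPgcd}), whereas you argue directly from Definition~\ref{def-transversalP} using the invariance of the point multiplicity $\mult(A,\myC(W_1))$ under $L$; both routes are valid and equally short.
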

\begin{proof}
\noindent (1)   $A\in \myB(\cP)$ iff  $p_i(A)=0$ for $i\in \{1,\ldots,4\}$ iff $p_i(L(L^{-1}(A)))=0$ for $i\in \{1,\ldots,4\}$ iff $L^{-1}(A)\in \myB(\cP^L)$ iff $A\in L(\myB(\cP^L))$. So (1) follows.

\noindent We consider the curves $\myC(W_{i}^{\cP})$ and $\myC(W_{i}^{\cP^L})$ (see Remark \ref{remark-notation-Wi}), and we note that $\myC(W_{i}^{\cP^L})$ is the transformation of $\myC(W_{i})$ under the birational transformation $L^{-1}$ of $\proj2$. Now, (2) and (3) follow from Definition  \ref{def-multiplicity-BP}, and (4) from Lemma \ref{Lemma-traPgcd}.
\end{proof}

\para

Next results  analyze the base loci of  birational surface parametrizations assuming that there exists one of them with empty base locus.

\para

\begin{lemma}\label{lemma-BSBP}  Let $\cP$ and $\cQ$ be two birational projective parametrizations of the same surface $\myS$ such that $\cQ(\cS)=\cP$ and $\myB(\cQ)=\emptyset$.  It holds that
\begin{enumerate}
\item $\myB(\cS)=\myB(\cP)$.
\item If $A\in \myB(\cS)$ then $\deg(\myS)\,\mult(A,\myB(\cS))=\mult(A,\myB(\cP))$.
\end{enumerate}
\end{lemma}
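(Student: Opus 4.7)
The plan is to treat the two claims in turn: (1) by a $\gcd$-argument in homogeneous coordinates, and (2) by a local analysis of Hilbert--Samuel multiplicities using the polynomial identity from~(1).

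\para

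\emph{Part (1).} Since $\cP=\cQ\circ\cS$ as rational maps from $\proj2$ to $\projtres$, the tuples $(p_1:\cdots:p_4)$ and $(q_1(\cS):\cdots:q_4(\cS))$ represent the same map, so there is a polynomial $\mu(\ot)\in\K[\ot]$ with $q_i(\cS(\ot))=\mu(\ot)\,p_i(\ot)$ for $i=1,\dots,4$. From $\gcd(p_1,\dots,p_4)=1$ we read $\mu=\gcd(q_1(\cS),\dots,q_4(\cS))$ up to a nonzero constant. A point $\ot_0$ is a common zero of the $q_i(\cS)$ iff either $\cS(\ot_0)$ is defined and lies in $\myB(\cQ)=\emptyset$ (impossible), or $\cS(\ot_0)$ is undefined, i.e.\ $\ot_0\in\myB(\cS)$; hence this common zero locus equals $\myB(\cS)$, which is finite. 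The zero locus of $\mu$ lies in this finite set, so $\mu$ must be a nonzero constant, and consequently $\myB(\cP)=\{p_1=\cdots=p_4=0\}=\{q_1(\cS)=\cdots=q_4(\cS)=0\}=\myB(\cS)$.

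\para

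\emph{Part (2).} With $\mu$ constant, $p_i=\mu^{-1}q_i(\cS)$ as polynomials, hence $W_j^{\cP}(\cdot,\ot)=\mu^{-1}W_j^{\cQ}(\cdot,\cS(\ot))$ for $j=1,2$. Let $R$ be the local ring of $\proj2$ at $A$ and let $\phi\colon\K[\oy]\to R$ be the substitution $y_i\mapsto s_i$. Then, up to units, $(W_1^{\cP},W_2^{\cP})R=\phi\bigl((W_1^{\cQ},W_2^{\cQ})\bigr)R$, and $\mult(A,\myB(\cP))$ is the length of $R$ modulo this ideal. Because $\myB(\cQ)=\emptyset$, the ideal $(q_1,\dots,q_4)\K[\oy]$ is primary to the irrelevant ideal $(\oy)$; two generic linear combinations $W_j^{\cQ}(\cdot,\oy)$ meet transversally in $d_Q^2$ points on $\proj2_{\oy}$, and by the standard Bezout-type formula $\deg(\myS)=\deg(\cQ)^2-\mult(\myB(\cQ))=d_Q^2$ for a birational $\cQ$ with empty base locus.

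\para

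The identity to establish is therefore
\[
  \mult(A,\myB(\cP))=d_Q^2\cdot\mult(A,\myB(\cS))=\deg(\myS)\,\mult(A,\myB(\cS)).
\]
I would prove this via the multiplicativity of Samuel multiplicities for the composed ideal $\phi((W_1^{\cQ},W_2^{\cQ}))$: the ``horizontal'' factor contributed by $(W_1^{\cQ},W_2^{\cQ})$ on the target side is $d_Q^2$, while the ``vertical'' factor at $A$ coming from the substitution $y_i\mapsto s_i$ is exactly $\mult_A(\myC(V_1^{\cS}),\myC(V_2^{\cS}))=\mult(A,\myB(\cS))$. The main obstacle is pinning down this multiplicativity per point rather than only after summing; as a consistency check, the total identity $\mult(\myB(\cP))=d_P^2-\deg(\myS)=d_Q^2(d_S^2-1)=\deg(\myS)\,\mult(\myB(\cS))$ follows directly from Bezout, where $d_P=d_Q d_S$ and $\mult(\myB(\cS))=d_S^2-1$. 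As a fallback I would resolve the base locus of $\cS$ at $A$ by blowups $\pi\colon\tilde X\to\proj2$, so that $\tilde\cS=\cS\circ\pi$ becomes a morphism on a neighborhood of $\pi^{-1}(A)$, and then compute the relevant intersection on $\tilde X$ via the projection formula applied to $\tilde\cS$ together with the decomposition $\pi^{\ast}\myC(W_j^{\cP})=\tilde\cS^{\ast}\myC(W_j^{\cQ})+D_j$, where $D_j$ is supported on the exceptional locus of $\pi$ over $A$.
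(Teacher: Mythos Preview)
Your argument for Part~(1) is correct and, in fact, more transparent than the paper's. The paper obtains $\myB(\cS)=\myB(\cP)$ by invoking Theorem~11 of \cite{CoxPerezSendra2020} (which gives $\myB({}^{L_\cS}\!\cS)=\myB({}^{L_\cP}\!\cP)$ for generic $L_\cS,L_\cP$) and then removing the auxiliary $L$'s via Lemma~\ref{lemmaBsBLs} and Remark~\ref{rem-BPBPL}. Your direct route---observing that $q_i(\cS)=\mu\,p_i$, that the common zero locus of the $q_i(\cS)$ is $\myB(\cS)$ because $\myB(\cQ)=\emptyset$, and that a homogeneous $\mu$ with finite zero set must be constant---avoids the external citation entirely and yields as a by-product the polynomial identity $p_i=\mu^{-1}q_i(\cS)$ you then reuse. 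This is a genuinely simpler argument.

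Part~(2), however, is not a proof as written: you yourself flag the gap. The identity you want is a \emph{pointwise} multiplicativity of intersection multiplicities under the substitution $y_i\mapsto s_i$, and the Hilbert--Samuel heuristic you sketch (``horizontal factor $d_Q^2$ times vertical factor $\mult(A,\myB(\cS))$'') is not a theorem you can cite without further work; the clean product formula you would need typically requires flatness or a careful blow-up analysis, and your fallback via resolution of $\myB(\cS)$ is only an outline. The global consistency check $\mult(\myB(\cP))=d_Q^2(d_S^2-1)=\deg(\myS)\,\mult(\myB(\cS))$ is correct but does not localize. The paper does not attempt an independent local computation either: it simply invokes Theorem~11 (and elsewhere Corollary~9) of \cite{CoxPerezSendra2020}, where the per-base-point relation is established. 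If you want a self-contained argument, the blow-up approach is the right idea, but you would need to actually carry out the exceptional-divisor bookkeeping and verify that the generic curves $\myC(W_j^{\cQ})$ meet only at $d_Q^2$ simple points (which does follow from $\myB(\cQ)=\emptyset$ and birationality of $\cQ$).
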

\begin{proof}
Since $\cQ=\cP(\cS)$ and $\myB(\cQ)=\emptyset$, by Theorem 11 in \cite{CoxPerezSendra2020} we get that $\myB({}^{L_{\cS}}\!\cS)=\myB({}^{L_{\cP}}\!\cP)$ for $L_{\cS}$ in a certain open subset of $\myGS$ and $L_{\cP}$ in a certain open subset of $\myG$. Now, using Lemma \ref{lemmaBsBLs}, and Remark \ref{rem-BPBPL} one concludes the proof of statement (1). Statement (2) follows from Theorem 11 in \cite{CoxPerezSendra2020}, taking into account that $\cQ$ is birational.
\end{proof}

\begin{lemma}\label{L-mult} Let $\cP$ and $\cQ$ be two birational projective parametrizations  of the same surface $\myS$ such that $\cQ(\cS)=\cP$ and $\myB(\cQ)=\emptyset$. Then,   for every $A \in \myB(\cS)$ it holds that  (see \eqref{eq-W} and \eqref{eq-V})
$$\mult(A, \myC(W_1))= \mult(A, \myC(V_1))\,\deg(\cQ).$$
\end{lemma}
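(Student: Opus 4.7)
The plan is to invoke Lemma~\ref{lemma-BasePointP}(1) and Lemma~\ref{lemma-BasePointS}(2), both applied with $L$ equal to the identity, to rewrite the statement as an equality of minimum vanishing orders:
\[
\mult(A,\myC(W_{1}))=\min_{i=1,\ldots,4}\mult(A,\myC(p_i))=:m_{\cP},\qquad \mult(A,\myC(V_{1}))=\min_{j=1,2,3}\mult(A,\myC(s_j))=:m_{\cS}.
\]
Setting $d=\deg(\cQ)$, the goal reduces to showing $m_{\cP}=d\cdot m_{\cS}$. The identity $\cQ\circ\cS=\cP$ at the level of homogeneous tuples yields $q_i(\cS(\ot))=g(\ot)\,p_i(\ot)$ for every $i$, where $g\in\K[\ot]$ is a homogeneous form and, because $\gcd(p_1,\ldots,p_4)=1$, coincides with $\gcd(q_1(\cS),\ldots,q_4(\cS))$.

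The first step is to show that $g$ is a non-zero constant. I plan to argue by contradiction: if $h$ were a non-constant irreducible factor of $g$, then $h\mid q_i(\cS)$ for every $i$, so for any $B\in\myC(h)\setminus\myB(\cS)$ the point $\cS(B)\in\proj2$ would be a common zero of $q_1,\ldots,q_4$, contradicting $\myB(\cQ)=\emptyset$. Thus $\myC(h)\subseteq\myB(\cS)$, which is impossible since $\myB(\cS)$ is finite while $\myC(h)$ is a projective curve. Hence $g\in\K^{*}$, and consequently $\mult(A,\myC(p_i))=\mult(A,\myC(q_i(\cS)))$ for all $i$.

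The second step is a local initial-form computation establishing $\min_{i}\mult(A,\myC(q_i(\cS)))=d\cdot m_{\cS}$. Passing to an affine chart around $A$, let $\tau_j$ denote the initial form of $s_j$ at $A$, a non-zero homogeneous polynomial of degree $m_j:=\mult(A,\myC(s_j))\geq m_{\cS}$. For any multi-index $\alpha=(\alpha_1,\alpha_2,\alpha_3)$ with $|\alpha|=d$, the monomial $s_1^{\alpha_1}s_2^{\alpha_2}s_3^{\alpha_3}$ has order $\sum_j\alpha_j m_j\geq d\cdot m_{\cS}$ at $A$, with equality exactly when the support of $\alpha$ is contained in $I:=\{j:m_j=m_{\cS}\}$. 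Consequently, the degree-$(d\cdot m_{\cS})$ part of $q_i(\cS)$ at $A$ equals
\[
F_i(\vec{v}):=q_i\bigl(\tau_1^{(I)}(\vec{v}),\tau_2^{(I)}(\vec{v}),\tau_3^{(I)}(\vec{v})\bigr),
\]
where $\tau_j^{(I)}:=\tau_j$ if $j\in I$ and $\tau_j^{(I)}:=0$ otherwise. Hence $\mult(A,\myC(q_i(\cS)))=d\cdot m_{\cS}$ as soon as $F_i\not\equiv 0$ for at least one index $i$.

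The hardest point, and the precise place where $\myB(\cQ)=\emptyset$ is essential, is ruling out the possibility that $F_1\equiv\cdots\equiv F_4\equiv 0$. My plan here is to observe that, since $I\neq\emptyset$ and each $\tau_j$ with $j\in I$ is a non-zero form, the assignment $\vec{v}\mapsto(\tau_1^{(I)}(\vec{v}):\tau_2^{(I)}(\vec{v}):\tau_3^{(I)}(\vec{v}))$ has a non-empty image in $\proj2$. Simultaneous vanishing of all $F_i$ would then produce a point of $\proj2$ at which all $q_i$ vanish, contradicting $\myB(\cQ)=\emptyset$. Therefore some $F_i\not\equiv 0$, giving $\min_{i}\mult(A,\myC(q_i(\cS)))=d\cdot m_{\cS}$; combined with the first step this yields $m_{\cP}=d\cdot m_{\cS}$, which is the claimed equality.
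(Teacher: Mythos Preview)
Your proof is correct and follows the same overall strategy as the paper's (an initial-form computation at the base point, with the emptiness of $\myB(\cQ)$ supplying the non-vanishing of the leading part), but the implementation differs in two respects. First, the paper begins by left-composing $\cS$ and $\cP$ with generic projective transformations so that, via Lemma~\ref{lemma-singularlocus} and Remark~\ref{rem-BPBPL}, \emph{all} the $\mult(A,\myC(s_i))$ coincide and \emph{all} the $\mult(A,\myC(p_i))$ coincide; this removes the need for your index set $I$ and lets them work with the full tangent tuple $(T_1,T_2,T_3)$. You instead keep the minima and handle the possibly strict subset $I$ directly, which is a bit more elementary and avoids the reduction machinery. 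Second, to exclude the simultaneous vanishing of the leading forms, the paper argues by cases according to whether $(T_1:T_2:T_3)$ defines a single point of $\proj2$ (then it would lie in $\myB(\cQ)$) or a curve parametrization (then $\gcd(q_1,\dots,q_4)\neq 1$); your argument is shorter: any point in the image of $\vec{v}\mapsto(\tau_1^{(I)}:\tau_2^{(I)}:\tau_3^{(I)})$ would be a base point of $\cQ$. Finally, where the paper quotes Theorem~10 of \cite{CoxPerezSendra2020} to obtain $\gcd(q_1(\cS),\dots,q_4(\cS))=1$, you supply the direct argument. Both routes yield the same conclusion; yours is more self-contained, while the paper's fits the left-$\myGS$-composition framework used throughout Section~\ref{sec-trasnversal-base-locus}.
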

\begin{proof}
Let $\cP=(p_1:\cdots:p_4)$, and $\cQ=(q_1:\cdots:q_4)$, where $\gcd(q_1,\ldots,q_4)=1$. We know that $p_i=q_i(\cS)$. Moreover,  since $\myB(\cQ)=\emptyset$, by Theorem 10 in \cite{CoxPerezSendra2020}, we have that $\gcd(p_1,\ldots,p_4)=1$.

We start observing that because of  {Lemma} \ref{lemma-BSBP} one has that $\myB(\cS)=\myB(\cP)$. Now, let us consider $L\in \myGS$ and $M\in \myG$. Let $\cQ^*={}^{M}\!Q^{L^{-1}}$, $\cS^*=\LcS$ and $\cP^*={}^{M}\!\cP$. Note that $\cQ^*(\cS^*)=\cP^*$. Moreover, $\myB(\cQ^*)=\emptyset$. Indeed: if $A\in \projtres$ then
$B:=L^{-1}(A)\in \projtres$ and, since $\myB(\cQ)=\emptyset$, $C:=\cQ(B)\in \projtres$. Therefore $\cQ^*(A)=M(B)\in \projtres$ and,
 in consequence, $\myB(\cQ^*)=\emptyset.$  Moreover, $\cQ^*$ and $\cP^*$ parametrize the same surface. Furthermore, by Lemma \ref{lemma-transSSL}, $\cS^*$ is transversal. Thus, $\cS^*,\cP^*,\cQ^*$ satisfy the hypotheses of the lemma. On the other hand,
 by Lemma \ref{lemmaBsBLs} and Remark \ref{rem-BPBPL}, we have that $\myB(\cS^*)=\myB(\cS)=\myB(\cP)=\myB(\cP^*)$. Furthermore,
 by Lemmas \ref{lemma-BasePointP}, \ref{lemma-BasePointS} we have that
 $\mult(A,\myC(V_1))=\mult(A,\myC(V_{1}^{L}))$ and $\mult(A,\myC(W_1))=\mult(A,\myC(W_{1}^{M}))$. Therefore, by   Lemma \ref{lemma-singularlocus} and Remark \ref{rem-BPBPL}, we can assume w.lo.g. that
 for every $A\in \myB(\cS)=\myB(\cP)$ it holds that
 \begin{equation}\label{eqSP}
 \begin{array}{l}
 \text{$\mult(A,\myC(V_1))=\mult(A,\myC(s_i))$ for $i\in \{1,2,3\}$} \\
 \text{$\mult(A,\myC(W_1))=\mult(A,\myC(p_i))$ for $i\in \{1,2,3,4\}$}
 \end{array}
 \end{equation}
   Now, let $A\in \myB(\cP)$ and let $m:=\mult(A,\myC(V_1))$.
   We can  assume w.l.o.g that $A=(0:0:1)$. Let  $T_{i}$ denote the product of the tangents to $s_i$ at $A$. Also, let $\deg(\cS)=\mathfrak{s}$ $\deg(\cP)=\mathfrak{p}$, and  $\deg(\cQ)=\mathfrak{q}$ Then, by \eqref{eqSP}, we may write:
\begin{equation}\label{eqS}
s_i=T_i t_3^{\mathfrak{s}-m}+g_{{m+1,i}}t_3^{\mathfrak{s}-m-1}+\cdots+g_{{\mathfrak{s},i}}
\end{equation}
where $g_{{j,i}}(t_1, t_2)$ are homogeneous forms of degree $j$. In addition, let $q_i$ be expressed as
\begin{equation}\label{eq-q}
q_i(\ot)=F_{\mathfrak{q},i}+F_{\mathfrak{q}-1,i}t_3+\cdots +F_{\ell_i,i} t_{3}^{\mathfrak{q}-\ell_i}.
\end{equation}
where $F_{j,i}(t_1,t_2)$ are homogeneous forms of degree $j$. Then
\[ p_i(\ot)=F_{\mathfrak{q},i}(s_1,s_2)+F_{\mathfrak{q}-1,i}(s_1,s_2)s_3+\cdots +F_{\ell_i,i}(s_1,s_2) s_{3}^{\mathfrak{q}-\ell_i}. \]
Using this expression and \eqref{eqS} can be expressed as
\begin{equation}\label{eq-psubs}
\begin{array}{c}
p_i(\ot)=\left(F_{\mathfrak{q},i}(T_1,T_2)+F_{\mathfrak{q}-1,i}(T_1,T_2) T_3+
\cdots +F_{\ell_i,i}(T_1,T_2) T_{3}^{\mathfrak{q}-\ell_i}  \right) t_{3}^{\mathfrak{q}(\mathfrak{s}-m)} \\  \hspace*{1cm} + \left( \text{terms of degree in ${t_3}$  strictly smaller than $\mathfrak{q}(\mathfrak{s}-m)$}\right).
\end{array}
\end{equation}
Let
\[
H_i:=F_{\mathfrak{q},i}(T_1,T_2)+F_{\mathfrak{q}-1,i}(T_1,T_2) T_3+
\cdots +F_{\ell_i,i}(T_1,T_2) T_{3}^{\mathfrak{q}-\ell_i}. \]
Now, let us prove that $H_i$ is not identically zero. We first observe that $H_i=q_i(T_1,T_2,T_3)$ for   $i\in \{1,2,3,4\}$. We also note that
if there exists $i\in \{1,2,3,4\}$ such that $H_i =0$, by \eqref{eqSP},  it must happen that  for all $i\in \{1,2,3,4\}$ it holds that $ H_i =0$.
Let $H_1$ be zero. Then,
$\mathcal{T}=(T_1:T_2:T_3)\notin \proj2$, because otherwise $\mathcal{T}\in \myB(\cQ)$ and $\myB(\cQ)=\emptyset$. Therefore, $\mathcal{T}$ is a curve parametrization. Thus, if $H_i =0$,  then $\mathcal{T}$ parametrizes a common component of the four curves $\myC(p_i)$. But this implies that $\gcd(q_1,q_2,q_3,q_4)\neq 1$ which is a contradiction.

Thus, by \eqref{eqSP},
$\mult(A,\myC(p_1))=\mult(A,\myC(W_i))=\mathfrak{q} \, m= \deg(\cQ) \, \mult(A,\myC(V_i))$.
\end{proof}

\para

We finish this section stating the relationship between the transversality of $\cS$ and $\cP$ under the assumption that $\cP(\cS^{-1})$ does not have base points.

\para

\begin{theorem}\label{theorem-transversalSiffP}   Let $\cP$ and $\cQ$ be two birational projective parametrizations of the same surface $\myS$ such that $\cQ(\cS)=\cP$ and $\myB(\cQ)=\emptyset$. Then, $\cS$ is transversal if and only if $\cP$ is transversal.
\end{theorem}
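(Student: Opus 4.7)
The strategy is to reduce the biconditional to a direct comparison, base point by base point, between the two quantities $\mult(A,\myB(\cdot))$ and $\mult(A,\myC(\cdot))^2$ that define transversality, using the two preceding lemmas to move between the $\cS$-world and the $\cP$-world by known scaling factors.

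First I would invoke Lemma~\ref{lemma-BSBP}(1) to conclude that $\myB(\cS)=\myB(\cP)$, so in particular one base locus is empty if and only if the other is; in that trivial case both parametrizations are transversal by Definitions~\ref{def-transversal} and \ref{def-transversalP} and there is nothing more to prove. From now on fix $A\in \myB(\cS)=\myB(\cP)$.

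The key input is the pair of scaling relations:
\begin{equation*}
\mult(A,\myB(\cP))=\deg(\myS)\cdot \mult(A,\myB(\cS)) \qquad \text{(Lemma \ref{lemma-BSBP}(2))},
\end{equation*}
\begin{equation*}
\mult(A,\myC(W_1))=\deg(\cQ)\cdot \mult(A,\myC(V_1)) \qquad \text{(Lemma \ref{L-mult})}.
\end{equation*}
Squaring the second yields $\mult(A,\myC(W_1))^2=\deg(\cQ)^2\cdot \mult(A,\myC(V_1))^2$. To make the $\cS$-equation and $\cP$-equation comparable, I need the identity $\deg(\myS)=\deg(\cQ)^2$; this is the standard Bezout-type degree formula for a birational surface parametrization whose base locus is empty, applied to $\cQ$ (so $\mapdeg(\cQ)=1$ and $\mult(\myB(\cQ))=0$ forces $\deg(\cQ)^2=\deg(\myS)$). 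With this in hand the two sides of the transversality condition scale by the same factor $\deg(\myS)=\deg(\cQ)^2$:
\begin{equation*}
\mult(A,\myB(\cP))-\mult(A,\myC(W_1))^2=\deg(\myS)\bigl(\mult(A,\myB(\cS))-\mult(A,\myC(V_1))^2\bigr).
\end{equation*}
Since this holds for every $A$ in the common base locus, $\cS$ satisfies Definition~\ref{def-transversal} at $A$ if and only if $\cP$ satisfies Definition~\ref{def-transversalP} at $A$, which yields the equivalence.

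The potential obstacle is the clean use of $\deg(\myS)=\deg(\cQ)^2$; since $\cQ$ is birational onto $\myS$ with empty base locus this is classical, but I would state it explicitly (or cite the relevant result from \cite{CoxPerezSendra2020}) so that the cancellation is unambiguous. Beyond that, the argument is purely a bookkeeping of the two scaling factors provided by Lemmas~\ref{lemma-BSBP} and \ref{L-mult}.
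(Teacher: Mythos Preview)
Your proof is correct and follows essentially the same approach as the paper: both arguments combine Lemma~\ref{lemma-BSBP}(2) (the paper cites Corollary~9 in \cite{CoxPerezSendra2020} directly) with the squaring of Lemma~\ref{L-mult}, and then use $\deg(\cQ)^2=\deg(\myS)$ (Corollary~5 in \cite{CoxPerezSendra2020}) to match the scaling factors. The only cosmetic difference is that you phrase the conclusion as an equality of differences while the paper writes it as an equality of products, but the content is identical.
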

\begin{proof}
Let $A\in \myB(\cS)=\myB(\cP)$. First we note that from Lemma \ref{L-mult}, and Corollary 5 in \cite{CoxPerezSendra2020}, it holds that
\[ \mult(A, \myC(W_1))^2= \mult(A, \myC(V_1))^2  \deg({Q})^2 =\mult(A, \myC(V_1))^2  \deg(\myS) \]
Using Corollary 9 in \cite{CoxPerezSendra2020}, we have that
\[ \mult(A, \myB(\cP))=\deg(\myS)\mult(A, \myB(\cS)). \]
Therefore,
\[ \mult(A,\myB(\cS))\,\mult(A,\myC(W_1))^2=\mult(A,\myB(\cP))\,\mult(A,\myC(V_1))^2. \]
Thus, $\cS$ is transversal if and only if $\cP$ is transversal.
 \end{proof}

\section{Proper Polynomial Reparametrization}\label{section-properPolRep}

\para

In this section, we deal with the central problem of the paper, namely, the determination, if they exist, of proper (i.e. birational) polynomial parametrizations of rational surfaces. For this purpose, we distinguish several subsections. In the first subsection, we fix the general assumptions and we propose our strategy. In the second subsection, we perform the theoretical analysis, and in the last subsection  we prove the existence of a linear subspace, computable from the input data,  and containing the solution to the problem.

\para

We start recalling what we mean with a polynomial projective parametrization.   We say that a projective parametrization is \textsf{polynomial} if its dehomogenization w.r.t. the fourth component, taking $t_i=1$ for some $i\in \{1,2,3\}$, is polynomial; note that the fourth component of a polynomial projective parametrization has to be a power of $t_i$ for some $i\in \{1,2,3\}$. Clearly, a similar reasoning is applicable w.r.t. other dehomogenizations. On the other hand, we say that a parametrization is \textsf{almost polynomial} if its fourth component is the power of a linear form.

\para

The important fact is that a rational surface admits a birational polynomial parametrization if and only if it admits a birational almost polynomial parametrization. Furthermore, if we have an almost polynomial parametrization, and its fourth component is a power of the linear form  $L_{3}^{*}(\ot)$, we may consider two additional linear forms  $L_{1}^{*},L_{2}^{*}$ such that $L^*=(L_{1}^{*}:L_{2}^{*}:L_{3}^{*})\in \myGS$ and then  the composition of the almost polynomial parametrization with $(L^*)^{-1}$ is a polynomial parametrization of the same surface.

\subsection{General assumptions and strategy}\label{subsec-general assumptions}

\para

In our analysis we have two main assumptions. Se \textbf{assume} that the rational surface $\myS$ admits a polynomial birational parametrization with empty base locus. Throughout the rest of the paper, let us \textbf{fix} one of these parametrizations  and denote it by $\cQ$; that is,
\begin{equation}\label{eq-q}
\cQ(\ot)=(q_1(\ot): q_2(\ot): q_3(\ot): q_4(\ot)),
\end{equation}
with  $q_i$  homogenous polynomials of the same degree such that $\gcd(q_1,\ldots,q_4)=1$,
is a proper polynomial parametrization of $\myS$ satisfying that $\myB(\cQ)=\emptyset$. Note  that, by Corollary 6 in \cite{CoxPerezSendra2020}, the degree of $\myS$ is then the square of a natural number. Moreover, we introduce a second assumption. We \textbf{assume} that we are given a transversal birational parametrization  of $\myS$. Throughout the rest of the paper, let us \textbf{fix}   $\cP$  as a transversal proper parametrization of $\myS$, and let $\cP$ be expressed as in  \eqref{eq-Param}.

\para

Our goal is to reach $\cQ$, or more precisely an almost polynomial parametrization of $\myS$, from $\cP$. For this purpose, first we observe that, since both $\cP$ and $\cQ$ are birational,
they are related by means of a birational map of $\proj2$, say $\cS_\cP$. More precisely, $\cS_\cP := \cQ^{-1} \circ \cP$. In the following, we represent  $\cS_\cP$ as
\begin{equation}\label{eq-Sp}
\cS_\cP(\ot) =(s_1(\ot):s_2(\ot):s_3(\ot)),
\end{equation}
where $\gcd(s_1,s_2,s_3)=1$. Note that, because of Theorem \ref{theorem-transversalSiffP}, since $\cP$ is transversal, then $\cS_\cP$ is transversal.
  In addition, let $\cR_{\cP}  :=\cS_{\cP}^{-1}(\ot)=\cP^{-1} \circ \cQ$. In the sequel, we represent $\cR_\cP$ as
\begin{equation}\label{eq-Rp}
\cR_{\cP}(\ot) =(r_1(\ot):r_2(\ot):r_3(\ot)),
\end{equation}
where $\gcd(r_1,r_2,r_3)=1$.

\para

So, in order to derive   $\cQ$ from $\cP$ it would be sufficient to determine $\cS_\cP$, and hence  $\cR_\cP$, because $\cQ=\cP(\cR_\cP)$. Furthermore  if, instead of determining $\cS_\cP$, we obtain
$\LcS_{\cP}:=L\circ \cS_\cP$, for some $L\in \myGS$, then instead of $\cQ$ we get
\[  \cP((\LcS_{\cP})^{-1})=\cP(\cR^{L^{-1}}_{\cP})=\cQ(L^{-1})=\cQ^{L^{-1}},\]
which is almost polynomial, and hence solves the problem. Taking into account this fact we make the following two considerations:
\begin{enumerate}
\item We can \textbf{assume} w.l.o.g. that $\myB(\cP)\neq \emptyset$. Indeed, if $\myB(\cP)=\emptyset$, by Theorem 10 and Corollary 9 in \cite{CoxPerezSendra2020}, we get that $\myB(\cS_\cP)=\emptyset$. Furthermore, by Corollary 7 in \cite{CoxPerezSendra2020}, we obtained that $\deg(\cS_\cP)=1$. Thus, using that $\cQ$ is indeed polynomial, we get that the fourth component of $\cP$ is the power of a linear form, and therefore the input parametrization $\cP$  would be already almost polynomial, and hence the problem would be solved.
\item We can \textbf{assume} w.l.o.g.  that $\cS_\cP$  satisfies whatever property  reachable by means of a left composition with elements in $\myGS$, as for instance those stated in Lemmas \ref{lemma-RatV1V2}, or \ref{lemma-rat-Si}, or \ref{lemma-singularlocus}. In particular, by Lemma \ref{lemma-transSSL}, the transversality is preserved. In other words, in the set $\myR$ of all birational transformations of $\proj2$, we consider  the equivalence relation $\thicksim$, defined as
$\cS \thicksim \cS^*$ if there exists $L\in \myGS$ such that $L\circ \cS=\cS^*$, and we work with the equivalence classes   in $\myR/\thicksim$. %In the following, we \textbf{simplify the notation} and we write $\cS$ for $\cS_\cP$ and $\cR$ for $\cR_\cP$. See \eqref{eq-Sp} and \eqref{eq-Rp}.
\end{enumerate}

\para

Therefore, our strategy will be to find a birational map $\cM$ of $\proj2$ such that $\cP(\cM^{-1})$ is almost polynomial. For this purpose, we will see that it is enough to determine  a dominant   rational transformation $\cM$ of $\proj2$ (later, we will prove that such a transformation is indeed birational)  such that
\begin{enumerate}
\item $\deg({\cM})=\deg(\cS_\cP)$.
\item $\myB({\cM})=\myB(\cS_\cP)$.
\item $\forall \, A\in \myB({\cM})$ it holds that
$\mult(A,\myB({\cM}))=\mult(A,\myB(\cS_\cP))$.
\end{enumerate}
 The difficulty is that both $ {\cM}$ and $\cS_\cP$  are unknown. Nevertheless, by Corollary 10 and Theorem 3 in \cite{CoxPerezSendra2020}, we have that
 \[ \deg(\cS_\cP)=\dfrac{\deg(\cP)}{\sqrt{\deg(\myS)}}. \]
 Note that $\deg(\cP)$ is given and $\deg(\myS)$ can be determined by applying, for instance, the formulas in \cite{JSC08} (see also \cite{PS-ISSAC}). On the other hand, taking into account Lemma \ref{lemma-BSBP}, we can achieve our goal by focusing on $\cP$. More precisely, we reformulate the above conditions into the equivalent following conditions.

 \para

\begin{conditions}\label{conditions} \
We say that a  rational dominant map ${\cM}$ of $\proj2$  satisfies Conditions \ref{conditions} if
\begin{enumerate}
\item $\deg(\cM)=\frac{\deg(\cP)}{\sqrt{\deg(\myS)}}$.
\item $\myB(\cM)=\myB(\cP)$,
\item $\mult(A,\myB(\cM))=\frac{\mult(A,\myB(\cP))}{\deg(\myS)}$ for all $A\in \myB(\cP)$.
\end{enumerate}
\end{conditions}

In the following subsections, we will see that   rational dominant maps  satisfying Conditions \ref{conditions} provide an answer to the polynomiality  problem.

\subsection{Theoretical analysis}

We start this analysis with some technical lemmas. For this purpose, $\myS, \cQ, \cP, \cS_\cP, \cR_\cP$   are as in the previous subsection. We recall that $\cQ(\cS_\cP)=\cP, \myB(\cQ)=\emptyset$, $\cR_\cP=\cS_{\cP}^{-1}$, $\cP$ is transversal, and hence $\cS_\cP$ is also transversal. Moreover, by Lemma \ref{lemma-BSBP}, $\cS_\cP$ satisfies Conditions \ref{conditions}.  Furthermore, in the sequel, let
\begin{equation}\label{eq-overlineS}
\overline{\cS}(\ot)=(\overline{s}_1(\ot):\overline{s}_2(\ot):\overline{s}_3(\ot)),
\end{equation}
with $\gcd(\overline{s}_1,\overline{s_2},\overline{s_3})=1$, be  dominant rational map of $\proj2$ satisfying Conditions \ref{conditions}.

\para

\begin{lemma}\label{lema-degSR}
Let $\cM$ be a birational map of $\proj2$. Then,
$\deg(\cM)=\deg(\cM^{-1})$.
\end{lemma}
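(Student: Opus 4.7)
The plan is to invoke Lemma \ref{lemma-RatV1V2} symmetrically to $\cM$ and $\cM^{-1}$. Put $d = \deg(\cM)$ and $e = \deg(\cM^{-1})$; the goal is $d = e$.

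First I would apply Lemma \ref{lemma-RatV1V2} with $\cS = \cM$ and $\cR = \cM^{-1}$. Pick any $L \in \Omega_1$. The polynomial $V_{1}^{L} = \sum_{i=1}^{3} x_i L_i(\cM)$ is homogeneous of degree $d$ in $\ot$, so $\myC(V_{1}^{L}) \subset \mathbb{P}^{2}(\mathbb{F})$ is a plane curve of degree $d$. The lemma supplies the proper parametrization
\[
\cV_1(\ox, h_1, h_2) \;=\; \cR^{L^{-1}}\bigl(h_1 x_3,\; h_2 x_3,\; -(h_1 x_1 + x_2 h_2)\bigr).
\]
Because $L^{-1}$ is a linear automorphism, $\cR^{L^{-1}} = \cR \circ L^{-1}$ has components of degree $e$; substituting the linear forms in $(h_1, h_2)$ then produces components of $\cV_1$ that are homogeneous of degree $e$ in $(h_1, h_2)$.

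Next I would apply the standard correspondence for proper polynomial parametrizations of rational projective plane curves (see, e.g., \cite{SWP}): the degree of a plane curve coincides with the degree of any proper parametrization once common factors of the components have been removed. In particular, the degree of the curve is bounded above by the apparent degree of the parametrization, which yields $d \leq e$. To close the loop, I would apply Lemma \ref{lemma-RatV1V2} a second time with the roles swapped, namely $\cS = \cM^{-1}$ and $\cR = \cM$: the same reasoning produces a proper parametrization of a degree $e$ curve whose components have apparent degree $d$ in $(h_1, h_2)$, and hence $e \leq d$. Combining the two inequalities yields $d = e$.

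The step most requiring care is the inequality ``curve degree $\leq$ apparent parametrization degree,'' because a priori the components of $\cV_1$ could share a nontrivial common factor in $\mathbb{F}[h_1, h_2]$. However, this is not actually an obstacle: any such common factor simply lowers the effective degree of the parametrization, converting an equality into precisely the inequality that my argument uses, and the symmetric application of Lemma \ref{lemma-RatV1V2} to both $\cM$ and $\cM^{-1}$ is exactly what removes the need to verify coprimality directly.
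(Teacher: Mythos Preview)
Your proof is correct and rests on the same core idea as the paper's: apply Lemma~\ref{lemma-RatV1V2} to realize $\myC(V_1^{L})$ as a degree-$\deg(\cM)$ curve with a proper parametrization built from $\cM^{-1}$, and then read off the degree relation via the standard result (Theorem~4.21 in \cite{SWP}) equating the degree of a rational curve with that of any proper parametrization.

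The difference is in how you handle possible common factors of the components of $\cV_1$. The paper arranges a single direct equality by choosing $L$ so that two components of $\cM^{-1}\!\circ L^{-1}$ are coprime and then passing to affine coordinates via an auxiliary projective change $N$; this lets it invoke the curve--parametrization degree formula once, as an equality. You instead accept that a common factor might occur, observe that it can only make the effective parametrization degree drop (giving $d\le e$), and then close with the symmetric application of Lemma~\ref{lemma-RatV1V2} to $\cM^{-1}$ to obtain $e\le d$. Your route is a little more economical---it sidesteps the coprimality check and the auxiliary transformation $N$---at the price of invoking Lemma~\ref{lemma-RatV1V2} twice rather than once. Both arguments are clean; yours trades one technical verification for a symmetry argument.
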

\begin{proof} We use the notation introduced in Lemma \ref{lemma-RatV1V2}.
We take $L\in\myG$ such that
\begin{enumerate}
\item $\myC(V_{1}^{L})$  is rational (see  \eqref{eq-VL} for the definition of $V_{1}^{L}$ constructed from $\cM$, and Lemma \ref{lemma-RatV1V2} for the existence of $L$).
\item $\gcd(\eta_{1}^{L},\eta_{3}^{L})=1$, where $\cM^{-1}=(\eta_1: \eta_2: \eta_3)$.
\end{enumerate}
     In addition, we consider a projective transformation $N(\ot)$ in the parameters $\ot$ such that $\deg_{\ot}(V_{1}^{L}(\ox, N(\ot))=\deg_{t_2}(V_{1}^{L}(\ox, N(\ot))$ and
     $\deg_{\ot}(\eta_{1}^{L}(N(\ot))=\deg_{t_2}(\eta_{1}^{L}(N(\ot))=\deg_{t_2}(\eta_{3}^{L}(N(\ot))$. Then, it holds
\[
\begin{array}{cclr}
\deg(\cM) &= & \deg_{\ot}(V_{
1}^{L}(\ox,\ot) ) & \\
          &= & \deg_{\ot}(V_{1}^{L}(\ox,N(\ot)) & \text{($M$ is a proj. transf.)}\\
          & = & \deg_{t_2}(V_{1}^{L}(\ox, N(\ot)) & \text{(see above)} \\
          & = & \deg_{t_2}(\eta_{1}^{L}(N(x_1,h_1,x_3))/\eta_{3}^{L}(N(x_1,h_1,x_3)) & \text{(See Thm. 4.21 in \cite{PDSeSi}}) \\
          & = & \deg_{t_2}(\eta_{1}^{L}((N(x_1,h_1,x_3))) & \text{($\gcd(\eta_{1}^{L},\eta_{3}^{L})=1$)} \\
          & = & \deg(\cM^{-1}) &
\end{array}
\]
\end{proof}

\para

\begin{lemma}\label{Lemma-condition}
Let $\cM$ be a rational dominant map of $\proj2$.
If  $\cM$ satisfies Conditions \ref{conditions}, then $\cM$ is birational.
\end{lemma}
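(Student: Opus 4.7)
The plan is to show that $\mapdeg(\cM)=1$ by computing it via a Bezout-style formula and using Conditions \ref{conditions} together with the analogous formula for the birational parametrization $\cP$.

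First I would set up the auxiliary polynomials $V_1^{\cM}, V_2^{\cM}$ (as in \eqref{eq-V}) associated with $\cM$, and recall that, for a generic $(\overline{x}, \overline{y})$, the intersection $\myC(V_1^{\cM})\cap\myC(V_2^{\cM})$ splits into the base locus $\myB(\cM)$ and a generic fiber of $\cM$. Applying Bezout over $\mathbb{F}$ to the two curves of degree $\deg(\cM)$ yields the identity
\[
\deg(\cM)^{2} \;=\; \mapdeg(\cM) \;+\; \mult(\myB(\cM)),
\]
which is precisely the formula relating $\deg$, $\mapdeg$, and base-locus multiplicity for a dominant rational self-map of $\proj2$ (this is the planar instance of the degree identity invoked in the excerpt from \cite{CoxPerezSendra2020}, since $\proj2$ has degree $1$).

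Next I would write the corresponding identity for $\cP$. Using the curves $\myC(W_1^{\cP}), \myC(W_2^{\cP})$ of \eqref{eq-W} and Bezout inside the ambient surface $\myS\subset\projtres$ of degree $\deg(\myS)$, one gets
\[
\deg(\cP)^{2} \;=\; \deg(\myS)\cdot \mapdeg(\cP) \;+\; \mult(\myB(\cP)),
\]
which is exactly the content of Corollary~9 in \cite{CoxPerezSendra2020}. Since $\cP$ is birational, $\mapdeg(\cP)=1$, so
\[
\deg(\cP)^{2}-\mult(\myB(\cP))\;=\;\deg(\myS).
\]

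Finally I substitute Conditions \ref{conditions} into the first displayed identity:
\[
\mapdeg(\cM)\;=\;\deg(\cM)^{2}-\mult(\myB(\cM))
\;=\;\frac{\deg(\cP)^{2}}{\deg(\myS)}-\frac{\mult(\myB(\cP))}{\deg(\myS)}
\;=\;\frac{\deg(\cP)^{2}-\mult(\myB(\cP))}{\deg(\myS)}\;=\;1,
\]
where the last equality uses the identity just derived for $\cP$. Hence the generic fiber of $\cM$ has cardinality one, i.e., $\cM$ is birational.

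The only subtle point is the first Bezout identity for $\cM$: it must be applied after possibly a generic left composition with an element of $\myGS$ so that Lemma \ref{lemma-singularlocus} and Lemma \ref{lemma-RatV1V2} apply and the intersection multiplicities at base points really equal $\mult(A,\myB(\cM))$, with no stray contribution from points outside $\myB(\cM)$. Since left composition by $L\in\myGS$ preserves both $\deg(\cM)$, $\mapdeg(\cM)$ and the base locus with its multiplicities (Lemma \ref{lemmaBsBLs}), this normalization is harmless; once available, the two Bezout identities combine to give $\mapdeg(\cM)=1$ as above.
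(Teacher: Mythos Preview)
Your proof is correct and uses essentially the same mechanism as the paper: the identity $\deg(\cdot)^2=\mapdeg(\cdot)+\mult(\myB(\cdot))$ for dominant rational self-maps of $\proj2$ (Theorem~7 in \cite{CoxPerezSendra2020}) together with the surface analogue for $\cP$. The only cosmetic difference is that the paper compares $\cM$ directly to $\cS_\cP$ (which is known to be birational and, by Lemma~\ref{lemma-BSBP}, satisfies Conditions~\ref{conditions}), so it concludes in one line that $\mapdeg(\cM)=\mapdeg(\cS_\cP)=1$; you instead unfold the same identities through $\cP$ and $\deg(\myS)$, arriving at the same number. Your final ``subtle point'' paragraph is unnecessary: $\mult(A,\myB(\cM))$ is by definition the intersection multiplicity of $\myC(V_1)$ and $\myC(V_2)$ at $A$, so no normalization is needed for the Bezout identity to hold.
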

\begin{proof}
Since $\deg(\cM)=\deg(\cS_\cP)$, and $\mult(\myB(\cM))=\mult(\myB(\cS_\cP))$, by Theorem 7 (a) in \cite{CoxPerezSendra2020}, we have that $\mapdeg(\cM)=\mapdeg(\cS_\cP)$. So $\overline{\cS}$ is birational.
\end{proof}

\para

 Therefore, since we have assume above (see \eqref{eq-overlineS}) that $\overline{\cS}$ satisfies Conditions \ref{conditions}, $\overline{\cS}$ is birational. Let
\begin{equation}\label{eq-OverlineR}
\overline{\cR}(\ot)=\overline{\cS}^{-1}(\ot) =(\overline{r}_1(\ot): \overline{r}_2(\ot): \overline{r}_3(\ot))
\end{equation}
be its inverse. Clearly, $\overline{\cS}(\overline{\cR})=(t_1:t_2:t_3)$, which implies that $\overline{s}_i(\overline{\cR}(\ot))=t_i \, \wp(\ot)$, for $i\in \{1,2,3\}$, and where $\deg(\wp)=\deg(\overline{\cS})^2-1$ and hence $\deg(\wp)=\mult(\myB(\overline{\cS}))=\mult(\myB(\cS_\cP))=\mult(\myB(\cP))$. In the next result we prove that $\wp$ is directly related to $\myB(\overline{\cS})$, and using that $\myB(\overline{\cS})=\myB(\cS_\cP)$, we study the common factor appearing in the composition $\cS(\overline{\cR})$.  We start with a technical lemma.

\para

\begin{lemma}\label{lemma-composicion-derecha}
Let $L\in \myGS$. It holds that
\begin{enumerate}
\item $\myB(\cS_{\cP}^{L})=\myB(\cP^L)=\myB(\overline{\cS}^{\,L})$.
\item If $A\in \myB(\cS_{\cP}^L)$ then $\mult(A,\myB(\cS_{\cP}^L))=\mult(A,\cP^L)/\deg(\myS)$
\item $\overline{\cS}^{\,L}$   satisfies Conditions \ref{conditions}.
\item  $\cS_{\cP}^L$ is transversal
\item   If $\overline{\cS}$ is transversal,   then $\overline{\cS}^{L}$ is transversal.
\end{enumerate}
\end{lemma}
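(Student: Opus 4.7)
My plan is to treat each of the five statements in turn, reducing everything to right-composition bookkeeping plus the fact that both $\cS_{\cP}$ and $\overline{\cS}$ satisfy Conditions \ref{conditions} and relate to $\cP$ through the factorization $\cQ(\cS_{\cP})=\cP$ with $\myB(\cQ)=\emptyset$. The key observation that glues everything together is that Lemma \ref{lemma-basePbasePL} is stated for a surface parametrization but its proof is purely formal and applies verbatim to a rational map of $\proj2$ such as $\cS_{\cP}$ or $\overline{\cS}$: if $L\in\myGS$ and $\cM$ is a rational dominant map of $\proj2$, then $A\in\myB(\cM)$ iff $L^{-1}(A)\in\myB(\cM^L)$, with equal multiplicities, and transversality is preserved. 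I would record this adaptation explicitly at the start of the proof (or as a brief remark analogous to Remark \ref{rem-BPBPL}) so that both $\cS_{\cP}^{L}$ and $\overline{\cS}^{\,L}$ can be analyzed with the same tools.

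For (1), I would first compose $\cQ(\cS_{\cP})=\cP$ on the right with $L$, obtaining $\cQ(\cS_{\cP}^{L})=\cP^{L}$; since $\myB(\cQ)=\emptyset$ is not affected by right composition on the argument, Lemma \ref{lemma-BSBP}(1) applied to $(\cQ,\cS_{\cP}^{L},\cP^{L})$ yields $\myB(\cS_{\cP}^{L})=\myB(\cP^{L})$. For the second equality, use the right-composition rule: $\myB(\cP^{L})=L^{-1}(\myB(\cP))$ and $\myB(\overline{\cS}^{\,L})=L^{-1}(\myB(\overline{\cS}))$; since $\overline{\cS}$ satisfies Condition (2) of Conditions \ref{conditions}, $\myB(\overline{\cS})=\myB(\cP)$, and therefore the two sets coincide after applying $L^{-1}$. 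Statement (2) then follows by combining Lemma \ref{lemma-basePbasePL}(2) (which gives $\mult(A,\myB(\cS_{\cP}^{L}))=\mult(L(A),\myB(\cS_{\cP}))$ and similarly for $\cP$) with Lemma \ref{lemma-BSBP}(2), which relates $\mult(L(A),\myB(\cS_{\cP}))$ to $\mult(L(A),\myB(\cP))$ by the factor $\deg(\myS)$.

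For (3), I would verify the three clauses of Conditions \ref{conditions} for $\overline{\cS}^{\,L}$ separately. The degree is unchanged by composition with a projective transformation, so $\deg(\overline{\cS}^{\,L})=\deg(\overline{\cS})=\deg(\cP)/\sqrt{\deg(\myS)}=\deg(\cP^{L})/\sqrt{\deg(\myS)}$, covering Condition (1). Condition (2) was just proved as part of (1). For Condition (3), apply again the right-composition formula (the $\proj2$-analogue of Lemma \ref{lemma-basePbasePL}(2)) to pass from multiplicities at $A\in\myB(\overline{\cS}^{\,L})$ to multiplicities at $L(A)\in\myB(\overline{\cS})$, use that $\overline{\cS}$ satisfies Condition (3), and then translate $\mult(L(A),\myB(\cP))$ back to $\mult(A,\myB(\cP^{L}))$ by the same right-composition rule.

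Statements (4) and (5) are the cleanest: since $\cP$ is transversal by hypothesis, Theorem \ref{theorem-transversalSiffP} gives that $\cS_{\cP}$ is transversal, and the $\proj2$-analogue of Lemma \ref{lemma-basePbasePL}(4) (invariance of transversality under right composition by an element of $\myGS$) then yields (4); the same invariance applied to $\overline{\cS}$ under the hypothesis that $\overline{\cS}$ is transversal yields (5). The only mildly delicate point I anticipate is checking that the proof of Lemma \ref{lemma-basePbasePL} really transfers to rational maps of $\proj2$ without modification; this should amount to noting that the auxiliary curves $\myC(V_{i}^{\cS})$ transform under $L^{-1}$ exactly as $\myC(W_{i}^{\cP})$ do, so that Definition \ref{def-multiplicity-BP}, together with Lemma \ref{Lemma-trasgcd}, delivers the required equalities. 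Once that is recorded, the rest is a direct chain of substitutions and citations.
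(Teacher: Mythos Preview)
Your proposal is correct and follows essentially the same approach as the paper: both arguments rest on the invariance of base loci and intersection multiplicities under right composition with $L\in\myGS$ (the $\proj2$-analogue of Lemma~\ref{lemma-basePbasePL}, which the paper also invokes for (4) and (5)), combined with the relation $\myB(\cS_\cP)=\myB(\overline{\cS})=\myB(\cP)$ coming from Conditions~\ref{conditions} and Lemma~\ref{lemma-BSBP}. The only cosmetic difference is that for (1) you apply Lemma~\ref{lemma-BSBP} directly to the triple $(\cQ,\cS_{\cP}^{L},\cP^{L})$, whereas the paper unwinds the equivalence $A\in\myB(\cS_{\cP}^{L})\Leftrightarrow L(A)\in\myB(\cS_{\cP})=\myB(\cP)\Leftrightarrow A\in\myB(\cP^{L})$ by hand; both routes are equally valid.
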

\begin{proof} \
(1) $A\in \myB(\cS_{\cP}^{L})$ iff $\cS_{\cP}^{L}(A)=\overline{0}$ iff $\cS_{\cP}(L(A))=\overline{0}$ iff $L(A)\in \myB(\cS_{\cP})=\myB(\cP)$ iff $\cP^L(A)=\cP(L(A))=\overline{0}$ iff $A\in \myB(\cP^L)$. Moreover, the second equality follows as in the previous reasoning, taking into account that $\overline{S}$ satisfies Condition \ref{conditions}, and hence $\myB(\cS_{\cP})=\myB(\overline{\cS})=\myB(\cP)$. \\
(2) follows taking into account that the multiplicity of a point on a curve, as well as the multiplicity of intersection, does not change under projective transformations. \\
(3) Condition (1) follows taking into account that $\deg(L)=1$. Statement (1) implies condition (2). For condition (3), we apply statement (2) and that $\mult(A,\myB(\overline{\cS}^{\, L}))=\mult(A,\myB(\cS))$ because the multiplicity of intersection does not change with $L$. \\
(4) and (5) follow  arguing as in  the proof of Lemma  \ref{lemma-basePbasePL} (4).
\end{proof}

\para

\begin{theorem}\label{T-constructionS} Let $\overline{\cS}$ be transversal.
 If $i\in \{1,2,3\}$ then $\overline{s}_i(\overline{\cR})=t_i\, \wp(\ot)$ where
  $\deg(\wp(\ot))=\mult(\myB(\cP))$  and such that $\wp$ is uniquely determined by $\myB(\cP)$.
\end{theorem}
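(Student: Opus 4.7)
The plan is to prove existence of $\wp$ with the stated degree, then to identify $\wp$ as the unique member (up to scalar) of a linear system of plane curves determined entirely by $\myB(\cP)$. For the existence and degree, the homogeneous triple $(\overline{s}_1(\overline{\cR}(\ot)):\overline{s}_2(\overline{\cR}(\ot)):\overline{s}_3(\overline{\cR}(\ot)))$ agrees projectively with $(t_1:t_2:t_3)$ because $\overline{\cS}\circ\overline{\cR}$ is the identity as a rational map of $\proj2$; this produces a common polynomial factor $\wp(\ot)$ with $\overline{s}_i(\overline{\cR}(\ot))=t_i\,\wp(\ot)$ for $i\in\{1,2,3\}$. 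By Lemma \ref{lema-degSR}, $\deg(\overline{\cR})=\deg(\overline{\cS})$, so $\overline{s}_i(\overline{\cR})$ has total degree $\deg(\overline{\cS})^2$, forcing $\deg(\wp)=\deg(\overline{\cS})^2-1$; combining the Cremona-type identity $\mapdeg(\overline{\cS})=\deg(\overline{\cS})^2-\mult(\myB(\overline{\cS}))$ (with $\mapdeg=1$ by birationality) with Conditions \ref{conditions} identifies this with $\mult(\myB(\cP))$.

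\para

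Next, I would localize $\wp$ at each $A\in\myB(\overline{\cS})=\myB(\cP)$. After a generic left composition on $\overline{\cS}$, permissible because such a composition preserves Conditions \ref{conditions} (base points and their multiplicities are unchanged by Lemma \ref{lemmaBsBLs}) and transversality (Lemma \ref{lemma-transSSL}), Lemma \ref{lemma-singularlocus} makes $m_A:=\mult(A,\myC(\overline{s}_i))$ independent of $i\in\{1,2,3\}$; transversality (Lemma \ref{Lemma-trasgcd}) forces the tangents to $\myC(\overline{s}_1),\myC(\overline{s}_2),\myC(\overline{s}_3)$ at $A$ to be coprime, and $m_A^2=\mult(A,\myB(\overline{\cS}))$. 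Expanding the identity $t_i\,\wp(\ot)=\overline{s}_i(\overline{\cR}(\ot))$ locally around $A$ and invoking Corollary \ref{cor-tgs}, the vanishing order $\nu_A:=\mult(A,\myC(\wp))$ is a fixed function of $m_A$ and the tangent data, hence (by Conditions \ref{conditions}) of $\mult(A,\myB(\cP))$ alone; in particular $\nu_A$ is independent of the specific $\overline{\cS}$.

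\para

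The uniqueness then reduces to a postulation count. Consider the linear system of homogeneous $f\in\K[\ot]$ of degree $\mult(\myB(\cP))$ with $\mult(A,\myC(f))\geq\nu_A$ for every $A\in\myB(\cP)$; its expected dimension is
\[
\binom{\mult(\myB(\cP))+2}{2}-\sum_{A\in\myB(\cP)}\binom{\nu_A+1}{2},
\]
and the Cremona-type identity $\sum_{A}m_A^2=\deg(\overline{\cS})^2-1$ together with transversality forces this quantity to equal one. Hence $\wp$ is unique up to a nonzero scalar and depends only on $\myB(\cP)$.

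\para

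The main obstacle I expect is the explicit identification of $\nu_A$ from the tangent data in Corollary \ref{cor-tgs}, together with the verification that the fat-point scheme $\{(A,\nu_A)\}_{A\in\myB(\cP)}$ imposes independent conditions on forms of degree $\mult(\myB(\cP))$, so that the expected dimension is actually attained. Both steps rely essentially on the transversality hypothesis: without it, common tangent factors at base points would destroy the clean local factorization and the dimension of the linear system could drop, breaking the uniqueness argument.
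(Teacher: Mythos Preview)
Your existence and degree computation for $\wp$ is fine and matches what the paper does informally before the theorem. The problem is the uniqueness part.

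You localize $\wp$ at $A\in\myB(\overline{\cS})=\myB(\cP)$ and set $\nu_A:=\mult(A,\myC(\wp))$, hoping to recover $\wp$ as the unique element of a linear system of curves through the base points of $\overline{\cS}$. But $\myC(\wp)$ does not, in general, pass through $\myB(\overline{\cS})$. From $t_i\,\wp=\overline s_i(\overline{\cR})$ one sees that $\wp(B)=0$ exactly when $\overline{\cR}(B)$ is either undefined (so $B\in\myB(\overline{\cR})$) or lands in $\myB(\overline{\cS})$; hence $\myC(\wp)$ is supported on $\myB(\overline{\cR})$ together with the contracted locus of $\overline{\cR}$, not on $\myB(\overline{\cS})$. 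A quick example: take the quadratic Cremona $\overline{\cS}=((t_2-t_3)(t_1-t_3):t_3(t_1-t_3):t_3(t_2-t_3))$ with $\myB(\overline{\cS})=\{(1{:}0{:}0),(0{:}1{:}0),(1{:}1{:}1)\}$; one computes $\wp=t_1t_2t_3$, which does not vanish at $(1{:}1{:}1)$. So your $\nu_A$ are in general zero, the linear system you describe is not the right one, and the postulation count collapses. Even if you switched to $\myB(\overline{\cR})$, the resulting system would depend on $\overline{\cR}$ and hence on $\overline{\cS}$, destroying the claim that $\wp$ is determined by $\myB(\cP)$ alone.

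The paper avoids this entirely by a different device. It parametrizes the rational curve $\myC(\overline V_1)$ (Lemma \ref{lemma-RatV1V2}) and uses the key observation of Remark \ref{rem-V1V2} that $\overline{\cR}$ arises as a specialization $\overline{\cV}(\ot^*)$ of that parametrization. The analysis is then carried out in the parameter space of $\myC(\overline V_1)$: for each $A\in\myB(\overline{\cS})$ one forms $g_A=\gcd(\overline w_1-a_1\overline w_3,\overline w_2-a_2\overline w_3)$ in the parameter variables $(h_1,h_2)$, shows that $g_A^{m_A}$ divides $\overline s_i(\overline{\cW})$, and counts degrees using transversality to see that the remaining factor $f_i$ is linear. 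Specializing through $\ot^{**}$ identifies $f_i$ with $t_i$ (up to a common scalar) and gives $\wp=\lambda\prod_A g_A(\ot^{**})^{m_A}$. The factors $g_A$ and the multiplicities $m_A$ depend only on the points of $\myB(\cP)$ and on the parametrization $\overline{\cW}$; this is exactly what is re-used verbatim in Theorem \ref{T-constructionS2} (with the same $\overline{\cW}$ but $\cS_\cP$ in place of $\overline{\cS}$) to conclude that the common factor is the same $\wp$, which is the substantive content of ``uniquely determined by $\myB(\cP)$''. Your approach contains no analogue of this parametrized-curve argument, and Corollary \ref{cor-tgs} does not supply it: that corollary concerns tangents of $\myC(\overline s_i)$ at $A$, not the local behaviour of $\wp$ (which lives on the \emph{target} side of $\overline{\cR}$).
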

\begin{proof}
We first observe that we can assume w.l.o.g. that no base point of $\cP$ is on the   line at infinity $x_3=0$. Indeed, let $L\in \myGS$ be such that $\myB(\cP)$ is contained in the affine plane $x_3=1$. We consider $\overline{\cS}^{\,*}:=\overline{\cS}^L=(\overline{s}_{1}^{\,*}:\overline{s}_{2}^{\,*}:\overline{s}_{3}^{\,*})$ and $\overline{\cR}^{\,*}:=(\overline{\cS}^{L})^{-1}$, then $\overline{s}_{i}^{\,*}(\overline{\cR}^{\,*})=\overline{s}_{i}^{\,*}(L^{-1}(\overline{\cR}))$, and $\overline{s}_{i}^{\,*}=\overline{s}_{i}(L)$; hence  $\overline{s}_{i}^{\,*}(\overline{\cR}^{\,*})=\overline{s}_{i}(\overline{\cR})$. In addition, because of Lemma \ref{lemma-composicion-derecha}, $\overline{\cS}^{\,*}$   satisfies the hypothesis of the theorem.

\smallskip

Let $\myC(\overline{V}_1)$ denote the curve associated to $\overline{\cS}$ as in \eqref{eq-V}.
By Lemma \ref{lemma-RatV1V2}, taking $L$ in the corresponding open subset of $\myGS$, we have that $\myC(\overline{V}_{1}^{L})$ is a rational curve. So, we assume w.l.o.g. that $\myC(\overline{V}_1)$ is rational. Let $\overline{\mathcal{V}}(\ox,h_1,h_2)$ be the rational parametrization of $\myC(\overline{V}_1)$ provided by Lemma \ref{lemma-RatV1V2}. We apply a M\"obious transformation $\phi\in \mathscr{G}(\mathbb{P}^{1}(\K))$ such that if $\overline{\cW}(\ox,h_1,h_2)=(\overline{w}_1(\ox,h_1,h_2):\overline{w}_2(\ox,h_1,h_2):\overline{w}_3(\ox,h_1,h_2)):=
\overline{\cV}(\ox,\phi(h_1,h_2))$ then  the affine parametrization $\overline{\rho}(\ox,h_1):=(\overline{w}_1(\ox,h_1,1)/\overline{w}_3(\ox,h_1,1), \overline{w}_2(\ox,h_1,1)/\overline{w}_3(\ox,h_1,1))$ is affinely surjective (see \cite{Sendra-Normal} and \cite{SWP}).

Now, let $A=(a_1:a_2:1)\in \myB(\cP)$. By Lemma \ref{lemma-BasePointS}, $P\in \myC(\overline{V}_{1})$. We observe that, by taking $L$ in the open subset of Lemma \ref{lemma-singularlocus}, we may assume that
\begin{equation}\label{eq-mult1} m_A:=\mult(A,\myC(\overline{V}_{1}))=\mult(A,\myC(\overline{s}_{i})), \,\,i\in \{1,2,3\}.
\end{equation}
We consider the polynomial
\[g_A=\gcd(\overline{w}_{1}(\ox,h_1,h_2)-a_1\overline{w}_{3}(\ox,h_1,h_2), \overline{w}_{2}(\ox,h_1,h_2)-a_2\overline{w}_{3}(\ox,h_1,h_2)). \]
 Since the affine parametrization has been taken surjective, we have that
\begin{equation}\label{eq-mult2}
 \deg_{\oh}(g_A)=m_A
\end{equation}
 and that for every root $t_0$ of $g_A$ it holds that $\overline{\rho}(t_0)=(a_1,a_2)$.
We write $\overline{w}_{i}$ as
\[ \overline{w}_i=g_A\cdot \overline{w}_{i}^{\,*}+a_i\overline{w}_{3},\,i=1,2. \]
On the other hand, we express $\overline{s}_i$ as
\[\overline{s}_i(\ot)=\overline{T}_{i,m_A}(\ot)t_3^{\deg(\overline{\cS})-m_A}+
\cdots+\overline{T}_{i,\deg(\overline{\cS})}(\ot),\,\,\,\]
where $\deg(\overline{T}_{i,j})=j,\,j\in\{m_A,\ldots,\deg(\overline{\cS})\},$ and $\overline{T}_{i,j}(\ot)=\sum_{k_1+k_2=j}(t_1-a_1t_3)^{k_1}(t_2-a_2 t_3)^{k_2}$. Therefore
\[
\overline{s}_i(\overline{\cW})=g_{A}^{m_A} \cdot
\left(\overline{T}_{i,m_A}(\overline{w}_{1}^{\,*}, \overline{w}_{2}^{\,*}) \, \overline{w}_{3}^{\,\deg(\overline{\cS})-m_A}+\cdots+g_{A}^{{\deg(\overline{\cS})-
m_A}}
\overline{T}_{i,\deg(\overline{\cS})}(\overline{w}_{1}^{\,*},
\overline{w}_{2}^{\,*})\right).
\]
In other words, $g_A$ divides $\overline{s}_i(\overline{\cW})$. Now, for $B=(b_1:b_2:1)\in \myB(\cP)$, with $A\neq B$, it holds that $\gcd(g_A,g_B)=1$, since otherwise there would exist a root $t_0$ of $\gcd(g_A,g_B)$, and this implies that $\overline{\rho}(t_0)=(a_1,a_2)=(b_1,b_2)=\overline{\rho}(t_0)$ which is a contradiction. Therefore, we have that
\begin{equation}\label{eq-factorization}
 \overline{s}_i(\overline{\cW})=\prod_{A\in \myB(\cP)}g_A(\ox,h_1,h_2)^{m_A}\, f_i(\ox,h_1,h_2)
\end{equation}
We observe that the factor defined by the base points does not depend on $i$. Thus, since $\overline{s}_i(\overline{\cW})$ does depend on $i$, we get that $f_i$ is the factor depending on $i$. Furthermore,
\[ \begin{array}{cclr}
\deg_{\oh}\left(\prod_{A\in \myB(\cP)}g_{A}^{m_A}\right) & =& \sum_{A\in \myB(\cP)}\deg_{\oh}(g_A)^{m_A} & \\
& = & \sum_{A\in \myB(\cP)} m_{A}^{2} &  \text{(see \eqref{eq-mult2})} \\
& = & \sum_{A\in \myB(\cP)} \mult(A,\myC(\overline{V}_1))^{2} &  \text{(see \eqref{eq-mult1})} \\
& = & \sum_{A\in \myB(\cP)} \mult(A,\myB(\overline{\cS}))   &  \text{($\overline{\cS}$ is transversal)} \\
& = & \sum_{A\in \myB(\cP)} \mult(A,\myB(\overline{\cP}))   &  \text{(See Conditions \ref{conditions})} \\
& = & \mult(\myB(\overline{\cP}))   &  \text{(See Def. \ref{def-multiplicity-BP})} \\
\end{array}
\]
Moreover, by Theorem 4.21 in  \cite{SWP}, since $\overline{\cW}$ is birational it holds that $\deg(\overline{\cW})=\deg(\myC(\overline{V}_1))=\deg(\overline{\cS})$. Hence, $\deg(\overline{s_i}(\overline{\cW}))=\deg(\overline{\cS})^2=\mult(\myB(\cP))+1$. Therefore, $f_i$ in \eqref{eq-factorization} is a linear form.

In this situation, let us introduce the notation $\ot^*:=(t_3,0,-t_1,t_1,t_2)$ and $\ot^{**}=(t_3,0,-t_1,\phi^{-1}(t_1,t_2))$.
Then, for $i\in \{1,2,3\}$, we have that
\[
\begin{array}{cclr}
t_i \,\wp &= & \overline{s}_i(\overline{\cR}) &  \\
& =& \overline{s}_{i}(\overline{\cV}(\ot^*)) & \text{(see Remark \ref{rem-V1V2})} \\
& =& \overline{s}_{i}(\overline{\cW}(\ot^{**})) & \text{(see definition of $\overline{\cW}$)}  \\
& =& \prod_{A\in \myB(\cP)}g_A(\ot^{**})^{m_A}\, f_i(\ot^{**})  & \text{(see \eqref{eq-factorization})}
\end{array}
\]
Taking into account that $\prod_{A\in \myB(\cP)}g_A(\ot^{**})^{m_A}$ does not depend on $i$, we get that $t_1\,f_2(\ot^{**}) =t_2\,f_1(\ot^{**}) $. This implies that $t_1$ divides $f_1(\ot^{**}) $, and since $f_1(\ot^{**})$ is linear we get that $t_1=\lambda f_1(\ot^{**})$ for $\lambda\in \K\setminus \{0\}$. Then, substituting above, we get $\lambda f_1(\ot^{**})f_{2}(\ot^{**})=t_2 f_1(\ot^{**})$, which implies that  $t_2=\lambda f_2(\ot^{**})$.
Similarly, for  $t_3=\lambda f_3(\ot^{**})$ .   Therefore, we get that
\[ \overline{s}_i(\overline{\cR})=t_i\, \lambda \prod_{A\in \myB(\cP)}g_A(\ot^{**})^{m_A}, \,\,\,\text{with $\lambda\in  \K\setminus \{0\}$} \]
This concludes the proof.
\end{proof}

\para

For the next theorem, we recall that $\cS_\cP=(s_1:s_2:s_3)$ with $\gcd(s_1,s_2,s_3)=1$; see \eqref{eq-Sp}.

\para

\begin{theorem}\label{T-constructionS2} Let   $\overline{\cS}$ be transversal.   If $i\in \{1,2,3\}$ then $ {s}_i(\overline{\cR})=Z_i(\ot)\,  \wp(\ot)$ where $Z_i$ is a linear form,
  $\deg(\wp(\ot))=\mult(\myB(\cP))$  and such that $\wp$ is uniquely determined by $\myB(\cP)$.
\end{theorem}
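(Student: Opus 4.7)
The plan is to mirror the argument of Theorem \ref{T-constructionS} almost verbatim, substituting the components $s_i$ of $\cS_\cP$ for the components $\overline{s}_i$ of $\overline{\cS}$ in the key computation, while keeping the parametrization $\overline{\cW}(\ox,h_1,h_2)$ of the curve $\myC(\overline{V}_1)$ and the specialization $\ot^{**}$ exactly as they were constructed there. This is feasible because $\cS_\cP$ and $\overline{\cS}$ share the structural data that drives that proof: by Lemma \ref{lemma-BSBP} and Conditions \ref{conditions} they have the same base locus as $\cP$ with the same pointwise multiplicities; by Conditions \ref{conditions}(1) they have the same degree; and by Theorem \ref{theorem-transversalSiffP} the map $\cS_\cP$ is transversal just as $\overline{\cS}$ is assumed to be. Moreover, the conclusion is invariant under the replacement $\cS_\cP \mapsto L\circ \cS_\cP$ for $L \in \myGS$, since the components of $L\circ \cS_\cP$ are $\K$-linear combinations of the $s_i$; this lets me invoke Lemma \ref{lemma-singularlocus} to assume, without loss of generality, that $\mult(A,\myC(s_i))=m_A$ for every $A \in \myB(\cP)$ and every $i \in \{1,2,3\}$, where $m_A := \mult(A, \myC(\overline{V}_1))$ is the integer used in the proof of Theorem \ref{T-constructionS}. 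That the two possible values of $m_A$ actually coincide follows from transversality of both $\cS_\cP$ and $\overline{\cS}$ combined with Conditions \ref{conditions}(3), which force $\mult(A,\myC(V_1^{\cS_\cP}))^2$ and $\mult(A,\myC(\overline{V}_1))^2$ to coincide with the common base-point multiplicity.

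Once this is set up, I would perform the Taylor expansion of $s_i$ around each base point $A=(a_1:a_2:1)$ exactly as in Theorem \ref{T-constructionS}, whose leading layer has order $m_A$ in the shifted variables $(t_1-a_1t_3, t_2-a_2t_3)$. Substituting $\overline{\cW}$ and using the identities $\overline{w}_1-a_1\overline{w}_3=g_A\overline{w}_1^{\,*}$, $\overline{w}_2-a_2\overline{w}_3=g_A\overline{w}_2^{\,*}$ together with the pairwise coprimality of the $g_A$'s, I would obtain a factorization
\[ s_i(\overline{\cW}) \;=\; \prod_{A \in \myB(\cP)} g_A(\ox, h_1, h_2)^{m_A} \cdot f_i(\ox, h_1, h_2). \]
A degree count in $\oh$ then forces $f_i$ to be linear: since $\deg(s_i)=\deg(\cS_\cP)=\deg(\overline{\cS})$ by Conditions \ref{conditions}(1) and $\deg_{\oh}(\overline{\cW})=\deg(\overline{\cS})$, one has $\deg_{\oh}(s_i(\overline{\cW}))=\deg(\overline{\cS})^2=\mult(\myB(\cP))+1$, while the base-point factor has total degree $\mult(\myB(\cP))$, as already verified in the proof of Theorem \ref{T-constructionS}.

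To finish, I would specialize $\overline{\cW}$ at the value $\ot^{**}$ built in that same proof, for which $\overline{\cW}(\ot^{**})=\overline{\cR}(\ot)$ up to a nonzero scalar and which already yields $\lambda\prod_{A}g_A(\ot^{**})^{m_A}=\wp(\ot)$. This gives
\[ s_i(\overline{\cR}) \;=\; \lambda^{-1} f_i(\ot^{**})\cdot \wp(\ot) \;=\; Z_i(\ot)\,\wp(\ot), \]
where $Z_i(\ot):=\lambda^{-1}f_i(\ot^{**})$ is a linear form in $\ot$, because $\ot^{**}$ depends linearly on $\ot$ ($\phi$ being a M\"obius transformation of $\mathbb{P}^1$ is linear in homogeneous coordinates). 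The polynomial $\wp$ is literally the one produced in Theorem \ref{T-constructionS} and is therefore uniquely determined by $\myB(\cP)$. The most delicate point, and the likely obstacle requiring explicit justification, is matching the Taylor-expansion orders of vanishing of $s_i$ and $\overline{s}_j$ at every base point; this is precisely where transversality of $\cS_\cP$ (via Theorem \ref{theorem-transversalSiffP}) combines with Conditions \ref{conditions}(3) to equate the square multiplicities, forcing the $m_A$'s to be common across $\cS_\cP$ and $\overline{\cS}$ and thereby allowing the argument of Theorem \ref{T-constructionS} to be transferred intact.
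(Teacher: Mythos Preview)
Your proposal is correct and follows essentially the same approach as the paper's own proof: reduce via left composition (using Lemma~\ref{lemma-singularlocus} simultaneously for $\cS_\cP$ and $\overline{\cS}$) so that $\mult(A,\myC(s_i))=m_A=\mult(A,\myC(\overline{s}_j))$ at every base point, match the $m_A$'s via transversality of both maps and Conditions~\ref{conditions}(3), then rerun the Taylor-expansion and factorization argument of Theorem~\ref{T-constructionS} with $s_i$ in place of $\overline{s}_i$, and finish with the same specialization at $\ot^{**}$. The paper additionally spells out the reduction to affine base points via Lemma~\ref{lemma-composicion-derecha}, which you subsume under ``mirror the argument of Theorem~\ref{T-constructionS} almost verbatim''; otherwise the two proofs coincide.
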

\begin{proof}
We first observe that we can assume w.l.o.g. that no base point of $\cP$ is on the   line at infinity $x_3=0$. Indeed, let $L\in \myGS$ be such that $\myB(\cP)$ is contained in the affine plane $x_3=1$. We consider ${\cS}^{\,*}:=\cS_{\cP}^L$$=({s}_{1}^{\,*}:{s}_{2}^{\,*}:{s}_{3}^{\,*})$,
$\overline{\cS}^{\,*}:=\overline{\cS}^L$, and  $\overline{\cR}^{\,*}:=(\overline{\cS}^{\,*})^{-1}$. Then
${s}_{i}^{\,*}(\overline{\cR}^{\,*})=s_{i}(L(L^{-1}(\overline{\cR})))=s_{i}(\overline{\cR})$.
In addition, because of Lemma \ref{lemma-composicion-derecha}, $\overline{\cS}^{\,*}$ and $\cS^*$ satisfy the hypotheses of the theorem.

\smallskip

Let $\myC(\overline{V}_1)$, $\overline{\mathcal{V}}(\ox,h_1,h_2)$, $\overline{\mathcal{W}}(\ox,h_1,h_2)=(\overline{w}_1:\overline{w}_2:\overline{w}_3)$ and $\overline{\rho}$ be as in the proof of Theorem \ref{T-constructionS}.

Now, let $A=(a_1:a_2:1)\in \myB(\cP)$. By Lemma \ref{lemma-BasePointS}, $P\in \myC(\overline{V}_{1})$. We recall that $\myB(\cP)=\myB(\overline{\cS})=\myB(\cS_\cP)$.
Let $\Omega_{3}^{\overline{\cS}}$ and $\Omega_{3}^{{\cS_\cP}}$ be the open subset in lemma \ref{lemma-singularlocus} applied to $\overline{\cS}$ and $\cS_\cP$, respectively. Taking $L\in \Omega_{3}^{\overline{\cS}}\cap \Omega_{3}^{\cS_\cP}$ (note that $\myGS$ is irreducible and hence the previous intersection is non-empty), we may assume that
\begin{equation}\label{eq-mult1b} m_A:=\mult(A,\myC(\overline{V}_{1}))=\mult(A,\myC(\overline{s}_{i})), \,\,i\in \{1,2,3\}.
\end{equation}
and
\begin{equation}\label{eq-multi1b-1}
\mult(A,\myC({V}_{1}))=\mult(A,\myC({s}_{i})), \,\,i\in \{1,2,3\}.
\end{equation}
Since $\cS_\cP$ and $\overline{\cS}$ are transversal, and taking into account Conditions \ref{conditions}, it holds that
\[ \mult(A,\myC(V_1))^2=\mult(A,\myB(\cS_\cP))=\mult(A,\myB(\overline{\cS}))=\mult(A,\myC(\overline{V}_1))^2 \]
Therefore,
\begin{equation}\label{eq-mult-equal}
 \mult(A,\myC(V_1))=m_A=\mult(A,\myC(\overline{V}_1)).
\end{equation}
We consider the polynomial $g_A=\gcd(\overline{w}_{1}-a_1\overline{w}_{3}, \overline{w}_{2}-a_2\overline{w}_{3}).$ Reasoning as in the Proof of Theorem \ref{T-constructionS} we get that
\begin{equation}\label{eq-mult2b}
 \deg_{\oh}(g_A)=m_A
\end{equation}
and that for every root $t_0$ of $g_A$ it holds that $\overline{\rho}(t_0)=(a_1,a_2)$.
We write $\overline{w}_{i}$ as $\overline{w}_i=g_A\cdot \overline{w}_{i}^{\,*}+a_i\overline{w}_{3}$ for $i=\in \{1,2\}.$

On the other hand, by  \eqref{eq-multi1b-1}, \eqref{eq-mult-equal}, we
have that $\mult(A,\myC(s_i))=m_A$. Therefore, we can express ${s}_i$ as
\[
{s}_i(\ot)=T_{i,m_A}(\ot)t_3^{\deg({\cS_\cP})-m_A}+
\cdots+T_{i,\deg({\cS_\cP})}(\ot),\,\,\,\]
where $\deg(T_{i,j})=j,\,j\in\{m_A,\ldots,\deg({\cS_\cP})\},$ and $T_{i,j}(\ot)=\sum_{k_1+k_2=j}(t_1-a_1t_3)^{k_1}(t_2-a_2 t_3)^{k_2}$. Therefore
\[
{s}_i(\overline{\cW})=g_{A}^{m_A} \cdot
\left(T_{i,m_A}(\overline{w}_{1}^{\,*}, \overline{w}_{2}^{\,*}) \, \overline{w}_{3}^{\,\deg( {\cS_\cP})-m_A}+\cdots+g_{A}^{{\deg({\cS_\cP})-
m_A}}
T_{i,\deg(\cS)}(\overline{w}_{1}^{\,*},
\overline{w}_{2}^{\,*})\right).
\]
In other words, $g_A$ divides ${s}_i(\overline{\cW})$. Now, for $B=(b_1:b_2:1)\in \myB(\cP)$, with $A\neq B$, reasoning as in the proof of Theorem \ref{T-constructionS}, it holds that $\gcd(g_A,g_B)=1$. Therefore, we have that
\begin{equation}\label{eq-factorization}
 {s}_i(\overline{\cW})=\prod_{A\in \myB(\cP)}g_A(\ox,h_1,h_2)^{m_A}\, f_i(\ox,h_1,h_2)
\end{equation}
%We observe that the factor defined by the base points does not depend on $i$. Thus, since %$\overline{s}_i(\overline{\cW})$ does depend on $i$, we get that $f_i$ is the factor %depending on $i$.
Furthermore,
\[ \begin{array}{cclr}
\deg_{\oh}\left(\prod_{A\in \myB(\cP)}g_{A}^{m_A}\right) & =& \sum_{A\in \myB(\cP)}\deg_{\oh}(g_A)^{m_A} & \\
& = & \sum_{A\in \myB(\cP)} m_{A}^{2} &  \text{(see \eqref{eq-mult2b})} \\
& = & \sum_{A\in \myB(\cP)} \mult(A,\myC(\overline{V}_1))^{2} &  \text{(see \eqref{eq-mult1b})} \\
& = & \sum_{A\in \myB(\cP)} \mult(A,\myB(\overline{\cS}))   &  \text{($\overline{\cS}$ is transversal)} \\
& = & \sum_{A\in \myB(\cP)} \mult(A,\myB(\overline{\cP}))   &  \text{(See Conditions \ref{conditions})} \\
& = & \mult(\myB(\overline{\cP}))   &  \text{(See Def. \ref{def-multiplicity-BP})} \\
\end{array}
\]
Moreover, by Theorem 4.21 in \cite{SWP}, since $\overline{\cW}$ is birational it holds that $\deg(\overline{\cW})=\deg(\myC(\overline{V}_1))=\deg(\overline{\cS})$.
Hence, by Condition \ref{conditions}, $\deg({s_i}(\overline{\cW}))=\deg(\cS_\cP)\deg(\overline{\cS})=\deg(\cS_\cP)^2=\mult(\myB(\cP))+1$. Therefore, $f_i$ in \eqref{eq-factorization} is a linear form.

In this situation, let us introduce the notation $\ot^*:=(t_3,0,-t_1,t_1,t_2)$ and $\ot^{**}=(t_3,0,-t_1,\phi^{-1}(t_1,t_2))$.
Then, for $i\in \{1,2,3\}$, we have that
\[
\begin{array}{cclr}
 {s}_i(\overline{\cR})& =& {s}_{i}(\overline{\cV}(\ot^*)) & \text{(see Remark \ref{rem-V1V2})} \\
& =&  {s}_{i}(\overline{\cW}(\ot^{**})) & \text{(see definition of $\overline{\cW}$)} \\
& =& \prod_{A\in \myB(\cP)}g_A(\ot^{**})^{m_A}\, f_i(\ot^{**})  & \text{(see \eqref{eq-factorization})}
\end{array}
\]
This concludes the proof.
\end{proof}

\para

\begin{corollary}\label{C-constructionS2} If   $\overline{\cS}$ is transversal, there exists $L\in \myGS$ such that   $\overline{\cS}=\LcS_{\cP}$.
\end{corollary}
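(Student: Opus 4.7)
The plan is to read off the corollary as a near-immediate consequence of Theorem \ref{T-constructionS2}, whose whole point is to show that $\cS_\cP \circ \overline{\cR}$ is given, component-wise, by a linear form times a common factor. Once that common factor is cancelled, the composition is a linear rational self-map of $\proj2$; birationality then forces it to lie in $\myGS$, and the final identity is obtained by composing with $\overline{\cS}$.

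Concretely, first I would apply Theorem \ref{T-constructionS2} to the triple $(\cS_\cP, \overline{\cS}, \overline{\cR})$, which is legitimate because $\overline{\cS}$ is assumed transversal and satisfies Conditions \ref{conditions}, and $\cS_\cP$ is birational and transversal by Theorem \ref{theorem-transversalSiffP}. This yields, for each $i \in \{1,2,3\}$, linear forms $Z_i(\ot)$ and a common polynomial factor $\wp(\ot)$ with $\deg(\wp) = \mult(\myB(\cP))$ such that $s_i(\overline{\cR}(\ot)) = Z_i(\ot)\,\wp(\ot)$. Cancelling the nonzero factor $\wp$, the rational map $\cS_\cP \circ \overline{\cR}$ is represented projectively by $L := (Z_1 : Z_2 : Z_3)$.

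Next I would argue $L \in \myGS$. By Lemma \ref{Lemma-condition}, $\overline{\cS}$ is birational, so $\overline{\cR} = \overline{\cS}^{\,-1}$ exists as a birational map of $\proj2$; together with the birationality of $\cS_\cP$, the composition $\cS_\cP \circ \overline{\cR}$ is a birational self-map of $\proj2$. Since this composition is represented by a triple of linear forms, the three forms $Z_1, Z_2, Z_3$ must be linearly independent (otherwise the image of $L$ would lie in a proper linear subspace, contradicting dominance), i.e.\ the associated matrix is invertible, so $L \in \myGS$.

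Finally, from $\cS_\cP \circ \overline{\cR} = L$, composing on the right with $\overline{\cS}$ gives $\cS_\cP = L \circ \overline{\cS}$, hence $\overline{\cS} = L^{-1} \circ \cS_\cP = {}^{L^{-1}\!}\cS_\cP$. Setting the required element of $\myGS$ to be $L^{-1}$ proves the corollary. The only step that carries any content is the invertibility of $L$; the rest is bookkeeping on top of Theorem \ref{T-constructionS2}.
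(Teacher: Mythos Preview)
Your proposal is correct and follows essentially the same route as the paper: both apply Theorem \ref{T-constructionS2} to obtain $\cS_\cP(\overline{\cR}) = (Z_1:Z_2:Z_3)$ with linear forms $Z_i$, and then take the inverse of this linear map to produce the required element of $\myGS$. You supply a bit more justification for why $(Z_1:Z_2:Z_3)$ is invertible (via birationality of the composition), which the paper simply asserts.
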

\begin{proof} From Theorem \ref{T-constructionS2}, we get that $\cS_{\cP}(\overline{\cR})=(Z_1(\ot): Z_2(\ot): Z_3(\ot))$,  where $Z_i$ is a linear form.
Thus, $\LcS_{\cP}=\overline{\cS},$ where $L\in \myGS$ is the inverse of $(Z_1, Z_2, Z_3)$.
\end{proof}

\para

\begin{corollary}\label{C-constructionS3}
  The following statements are equivalent
\begin{enumerate}
\item $\overline{\cS}$ is transversal.
\item There exists $L\in \myGS$ such that $\overline{\cS}=\LcS_{\cP}$.
\end{enumerate}
\end{corollary}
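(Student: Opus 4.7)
The plan is to establish the two implications separately, using results already in place.

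For the implication $(1)\Rightarrow (2)$ there is nothing new to prove: this is exactly the content of Corollary \ref{C-constructionS2}. So I would just cite it.

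For the reverse implication $(2)\Rightarrow (1)$, I would first recall that $\cS_\cP$ itself is transversal. Indeed, by the standing assumptions of Subsection \ref{subsec-general assumptions}, $\cP$ is transversal, $\cQ$ has empty base locus, and $\cQ(\cS_\cP)=\cP$; hence Theorem \ref{theorem-transversalSiffP} (applied to the triple $\cP, \cQ, \cS_\cP$) gives that $\cS_\cP$ is transversal. Now suppose (2) holds, i.e.\ $\overline{\cS}=\LcS_\cP$ for some $L\in\myGS$. Lemma \ref{lemma-transSSL} says exactly that transversality is preserved under left composition with elements of $\myGS$, so $\LcS_\cP$ is transversal, and hence so is $\overline{\cS}$. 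This closes the equivalence.

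There is no real obstacle here; the corollary is essentially a packaging result that combines Corollary \ref{C-constructionS2} (the hard direction, whose engine is Theorems \ref{T-constructionS} and \ref{T-constructionS2} describing $s_i(\overline{\cR})$ up to a universal factor $\wp$) with the routine invariance of transversality under left $\myGS$-action. The only point that deserves a line of care is making explicit that $\cS_\cP$ lies in the hypotheses of Theorem \ref{theorem-transversalSiffP}, which is immediate from the setup fixed at the beginning of Subsection \ref{subsec-general assumptions}.
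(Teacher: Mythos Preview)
Your proposal is correct and follows exactly the same approach as the paper: the paper also cites Corollary~\ref{C-constructionS2} for $(1)\Rightarrow(2)$ and Lemma~\ref{lemma-transSSL} for $(2)\Rightarrow(1)$. Your extra line justifying that $\cS_\cP$ is transversal is fine, though the paper treats this as part of the standing assumptions already recorded in Subsection~\ref{subsec-general assumptions}.
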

\begin{proof}
If (1) holds, then (2) follows from Corollary \ref{C-constructionS2}. Conversely, if (2) holds, then (1) follows from Lemma \ref{lemma-transSSL}
\end{proof}

\subsection{The Solution Space}

In this subsection we introduce a linear projective variety containing the solution to our problem and we show how to compute it. We start  identifying the set of all projective curves, including multiple component curves, of a fixed degree $d$, with the projective space (see \cite{Miranda},  \cite{SWP} or  \cite{Walker} for further details) $$\myV_d:=\mathbb{P}^{\frac{d (d+3)}{2}}(\K).$$
More precisely, we identify the projective curves of degree $d$ with the forms in $\K[\ot]$ of degree $d$, up to multiplication by non-zero $\K$-elements. Now, these forms are identified with the elements in $\myV_d$ corresponding to their coefficients, after fixing an order of the monomials. By abuse of notation, we will refer to the elements in $\myV_d$ by either their tuple of coefficients, or the associated form, or the corresponding curve.

\para

Let $\cM=(m_1(\ot):m_2(\ot):m_3(\ot))$, $\gcd(m_1,m_2,m_3)=1$, be a birational transformation of $\proj2$. We consider $\myV_{\deg(\cM)}$. Then, $m_1,m_2,m_3\in \myV_{\deg(\cM)}$. Moreover, in $\myV_{\deg(\cM)}$, we introduce  the projective linear subspace
\[ \cL(\cM):=\{ a_1 m_1(\ot)+ a_2 m_2(\ot) +a_3 m_3(\ot) \,|\, (a_1:a_2:a_3)\in \proj2\}. \]
We observe that if $\{m_1,m_2,m_3\}$ are linearly  dependent, then the image of $\proj2$ via $\cM^{-1}$ would be a line in $\proj2$ which is impossible because $\cM$ is birational on $\proj2$. Therefore, the following holds.

\para

\begin{lemma}\label{lemma-dim-L(S)}
If $\cM$ is a birational transformation of $\proj2$, then
$\dim(\cL(\cM))=2$.
\end{lemma}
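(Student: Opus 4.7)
The plan is to argue by contradiction, using that a birational map of $\proj2$ is in particular dominant, so its image must be dense in $\proj2$ and cannot be contained in a proper closed subvariety.

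First I would recall that $\cL(\cM)$ is by definition the projective linear span of the three forms $m_1, m_2, m_3 \in \myV_{\deg(\cM)}$, and hence $\dim(\cL(\cM)) \leq 2$ automatically. The whole content of the lemma is to rule out $\dim(\cL(\cM)) \leq 1$, i.e. to show that $m_1, m_2, m_3$ are linearly independent over $\K$.

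Next I would suppose for contradiction that $\{m_1, m_2, m_3\}$ is linearly dependent. Then there exist $(\alpha_1:\alpha_2:\alpha_3) \in \proj2$ with $\alpha_1 m_1(\ot) + \alpha_2 m_2(\ot) + \alpha_3 m_3(\ot) = 0$ as a form in $\K[\ot]$. Interpreting this equation on the image side, for every $\ot$ in the domain of definition of $\cM$ the point $\cM(\ot) = (m_1(\ot):m_2(\ot):m_3(\ot))$ satisfies the linear equation $\alpha_1 u_1 + \alpha_2 u_2 + \alpha_3 u_3 = 0$ in the target copy of $\proj2$. Hence the image of $\cM$ is contained in the projective line $\mathcal{L}_{\alpha} := \{ (u_1:u_2:u_3)\in \proj2 \mid \alpha_1 u_1 + \alpha_2 u_2 + \alpha_3 u_3 = 0 \}$.

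Finally I would close the argument: since $\cM$ is birational, it is dominant, so the Zariski closure of the image of $\cM$ is all of $\proj2$. But $\mathcal{L}_{\alpha}$ is a proper closed subset of $\proj2$, which gives the desired contradiction. Therefore $\{m_1, m_2, m_3\}$ are linearly independent and $\dim(\cL(\cM)) = 2$. I do not anticipate any serious obstacle; the only point that deserves care is making explicit that the linear dependence of the \emph{forms} translates into the containment of the \emph{image} in a line, which is immediate once one evaluates the relation at a generic point of the domain of $\cM$.
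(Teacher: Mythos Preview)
Your proposal is correct and follows essentially the same argument as the paper: the paper also observes that if $\{m_1,m_2,m_3\}$ were linearly dependent then the image of $\cM$ would be contained in a line of $\proj2$, contradicting birationality. Your write-up is simply a more detailed version of that one-sentence remark.
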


\para

\noindent
Similarly, one has the next lemma

\para

\begin{lemma}\label{Lemma-LSLSL}
If $\cM$ is a birational transformation of $\proj2$ and $L\in \myGS$ then $\cL(\cM)=\cL({}^{L}\!\cM)$.
\end{lemma}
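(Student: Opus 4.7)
The plan is to reduce the statement to straightforward linear algebra, using Lemma \ref{lemma-dim-L(S)} to avoid having to prove both inclusions explicitly.

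First I would observe that ${}^{L}\!\cM = L\circ \cM$ is again a birational transformation of $\proj2$, since $L\in\myGS$ is itself birational (a projective transformation) and composition of birational maps of $\proj2$ is birational. Moreover, $\deg({}^{L}\!\cM)=\deg(\cM)$ because $L$ is linear. Hence Lemma \ref{lemma-dim-L(S)} applies to ${}^{L}\!\cM$, giving $\dim(\cL({}^{L}\!\cM))=\dim(\cL(\cM))=2$. Both are linear subspaces of the same projective space $\myV_{\deg(\cM)}$.

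Next, I would write $L=(L_1:L_2:L_3)$ with $L_i(\ot)=\sum_{j=1}^{3}\ell_{i,j}t_j$, so that
\[
L_i(\cM)(\ot)=\sum_{j=1}^{3}\ell_{i,j}\,m_j(\ot),\qquad i=1,2,3.
\]
Thus every component of ${}^{L}\!\cM$ is already a $\K$-linear combination of $m_1,m_2,m_3$, so for any $(a_1:a_2:a_3)\in\proj2$,
\[
a_1 L_1(\cM)+a_2 L_2(\cM)+a_3 L_3(\cM)=\sum_{j=1}^{3}\Bigl(\sum_{i=1}^{3}a_i\ell_{i,j}\Bigr) m_j,
\]
which is an element of $\cL(\cM)$. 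This yields the inclusion $\cL({}^{L}\!\cM)\subseteq \cL(\cM)$.

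Since both subspaces have dimension $2$ by the first paragraph, this inclusion of linear subspaces of the same dimension inside $\myV_{\deg(\cM)}$ forces equality, so $\cL(\cM)=\cL({}^{L}\!\cM)$. No step is particularly delicate here; the only thing one has to be a little careful about is that Lemma \ref{lemma-dim-L(S)} genuinely applies to ${}^{L}\!\cM$, which requires checking birationality and that the three components of ${}^{L}\!\cM$ are coprime forms of the correct common degree (the coprimality follows because the invertibility of $(\ell_{i,j})$ lets one recover each $m_j$ as a $\K$-combination of the $L_i(\cM)$, so any common factor would divide $\gcd(m_1,m_2,m_3)=1$).
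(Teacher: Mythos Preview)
Your argument is correct. The paper does not actually write out a proof of this lemma (it is introduced with ``Similarly, one has the next lemma'' and left without proof), so your straightforward linear-algebra verification---show the inclusion $\cL({}^{L}\!\cM)\subseteq\cL(\cM)$ and conclude by equality of dimensions via Lemma~\ref{lemma-dim-L(S)}---is exactly the kind of routine check the authors had in mind. If anything, you could shorten it: since the matrix $(\ell_{i,j})$ of $L$ is invertible, $\{L_1(\cM),L_2(\cM),L_3(\cM)\}$ and $\{m_1,m_2,m_3\}$ span the same $\K$-linear space, giving both inclusions at once without appealing to Lemma~\ref{lemma-dim-L(S)}.
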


\para

\noindent
Furthermore, the following theorem holds

\para

\begin{theorem}\label{theorem-basesDeLS}
Let $\cM$ be a birational transformation of $\proj2$ and
let $\{n_1,n_2,n_3\}$ be  a basis of $\cL(\cM)$ and $\mathcal{N}:=(n_1:n_2:n_3)$. There exists $L\in \myGS$ such that ${}^{L}\!\cM=\mathcal{N}$.
\end{theorem}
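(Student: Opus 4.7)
The plan is to reduce the statement to elementary linear algebra on the $3$-dimensional vector space underlying the projective plane $\cL(\cM)$, and then package the change of basis as a projective transformation $L \in \myGS$.

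First I would recall that, by Lemma \ref{lemma-dim-L(S)}, $\dim(\cL(\cM)) = 2$, so the underlying vector space has dimension $3$. Since $m_1, m_2, m_3$ span $\cL(\cM)$ and their span has the maximal possible dimension, the forms $\{m_1, m_2, m_3\}$ are linearly independent over $\K$; in particular they constitute a (vector space) basis of the linear system. The hypothesis that $\{n_1, n_2, n_3\}$ is a basis of $\cL(\cM)$ means that these three forms are also linearly independent over $\K$ and span the same $3$-dimensional space.

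Next, I would express the new basis in terms of the old one. There exist scalars $\ell_{i,j} \in \K$ with
\[
n_i(\ot) \;=\; \ell_{i,1}\, m_1(\ot) + \ell_{i,2}\, m_2(\ot) + \ell_{i,3}\, m_3(\ot), \qquad i = 1,2,3,
\]
and the $3 \times 3$ matrix $(\ell_{i,j})$ is non-singular because it is the change-of-basis matrix between two bases of the same $3$-dimensional vector space. Define the projective transformation
\[
L(\ot) \;:=\; \bigl(\ell_{1,1} t_1 + \ell_{1,2} t_2 + \ell_{1,3} t_3 \,:\, \ell_{2,1} t_1 + \ell_{2,2} t_2 + \ell_{2,3} t_3 \,:\, \ell_{3,1} t_1 + \ell_{3,2} t_2 + \ell_{3,3} t_3\bigr),
\]
which belongs to $\myGS$ precisely because $(\ell_{i,j})$ is invertible.

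Finally, I would verify the composition. By construction,
\[
L_i(\cM(\ot)) \;=\; \ell_{i,1}\, m_1(\ot) + \ell_{i,2}\, m_2(\ot) + \ell_{i,3}\, m_3(\ot) \;=\; n_i(\ot), \qquad i = 1,2,3,
\]
so ${}^{L}\!\cM = L \circ \cM = (n_1 : n_2 : n_3) = \mathcal{N}$, as required. There is no real obstacle here: the content of the theorem is that a change of basis inside $\cL(\cM)$ is realized by a left composition with an element of $\myGS$, and this is immediate from the linear structure once $\dim(\cL(\cM)) = 2$ has been invoked.
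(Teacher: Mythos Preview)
Your proof is correct and follows essentially the same change-of-basis argument as the paper: both reduce the claim to the fact that $\{m_1,m_2,m_3\}$ and $\{n_1,n_2,n_3\}$ are two bases of the same $3$-dimensional space, so the transition matrix is invertible and defines an element of $\myGS$. Your version expresses the $n_i$ in terms of the $m_j$ (rather than the other way around, as the paper does), which yields the composition ${}^{L}\!\cM=\mathcal{N}$ directly without needing to pass to $L^{-1}$.
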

\begin{proof}
Let $\cM=(m_1:m_2:m_3)$, with $\gcd(m_1,m_2,m_3)=1$. Since $m_1,m_2,m_3\in \cL(\cM)$, and $\{n_1,n_2,n_3\}$ is a basis of $\cL(\cM)$, there exist $(\lambda_{i,1}:\lambda_{i,2}:\lambda_{i,3})\in \proj2$ such that
\[ m_i=\sum \lambda_{i,j} n_j. \]
Since $\{m_1,m_2,m_3\}$ is also a basis of $\cL(\cM)$, one has that $L:=(\sum \lambda_{1,j} t_j:
\sum \lambda_{2,j} t_j: \sum \lambda_{3,j} t_j)\in \myGS)$ and $\mathcal{N}=L\circ \cM$.
\end{proof}

\para

\begin{remark}
Observe that, by Theorem \ref{theorem-basesDeLS}, all bases of $\cL(\cM)$ generate birational maps of $\proj2$.
\end{remark}

\para

\begin{corollary}\label{corollary-transversal}
Let $\cM$ be a birational transformation of $\proj2$. The following statements are equivalent
\begin{enumerate}
\item $\cM$ is transversal.
\item There exists a basis $\{n_1,n_2,n_3\}$   of $\cL(\cM)$ such that $(n_1:n_2:n_3)$ is transversal.
\item For all bases $\{n_1,n_2,n_3\}$   of $\cL(\cM)$ it holds that $(n_1:n_2:n_3)$ is transversal.
\end{enumerate}
\end{corollary}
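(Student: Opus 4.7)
The plan is to exploit the cycle (1) $\Rightarrow$ (3) $\Rightarrow$ (2) $\Rightarrow$ (1), using as the main engine Theorem \ref{theorem-basesDeLS} (which identifies bases of $\cL(\cM)$ with left-compositions ${}^{L}\!\cM$ for $L\in\myGS$) together with Lemma \ref{lemma-transSSL} (invariance of transversality under left composition with projective transformations of $\proj2$).

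For (1) $\Rightarrow$ (3), I would take an arbitrary basis $\{n_1,n_2,n_3\}$ of $\cL(\cM)$ and set $\mathcal{N}=(n_1:n_2:n_3)$. By Theorem \ref{theorem-basesDeLS} there exists $L\in\myGS$ with $\mathcal{N}={}^{L}\!\cM$. Since $\cM$ is transversal, Lemma \ref{lemma-transSSL} yields that ${}^{L}\!\cM=\mathcal{N}$ is transversal. The implication (3) $\Rightarrow$ (2) is immediate because $\cL(\cM)$ admits a basis: by Lemma \ref{lemma-dim-L(S)} we have $\dim(\cL(\cM))=2$, so a basis (with three elements) exists.

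For (2) $\Rightarrow$ (1), I would start from a basis $\{n_1,n_2,n_3\}$ with $\mathcal{N}=(n_1:n_2:n_3)$ transversal. By Theorem \ref{theorem-basesDeLS} there exists $L\in\myGS$ with ${}^{L}\!\cM=\mathcal{N}$. Composing on the left with $L^{-1}\in\myGS$ gives $\cM={}^{L^{-1}}\!\mathcal{N}$, and since $\mathcal{N}$ is transversal, Lemma \ref{lemma-transSSL} applied to $\mathcal{N}$ with the projective transformation $L^{-1}$ implies that $\cM$ is transversal.

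I do not expect any serious obstacle here: everything follows by direct invocation of the previously established theorem and lemma, provided one is careful to note that Lemma \ref{lemma-transSSL} can be applied in both directions by replacing $L$ with $L^{-1}$, so that transversality is in fact invariant (not merely preserved) under left composition with elements of $\myGS$. The only routine check is that $\cL(\cM)=\cL({}^{L}\!\cM)$ when one wants to rephrase bases of $\cL(\cM)$ as bases of $\cL(\mathcal{N})$, which is exactly Lemma \ref{Lemma-LSLSL}.
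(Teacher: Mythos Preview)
Your proposal is correct and follows exactly the same approach as the paper, which simply states that the corollary follows from Theorem \ref{theorem-basesDeLS} and Lemma \ref{lemma-transSSL}. Your write-up merely spells out in detail the cycle of implications that the paper leaves implicit, including the observation that Lemma \ref{lemma-transSSL} can be applied with $L^{-1}$ to get the reverse direction.
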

\begin{proof}
It follows from Theorem \ref{theorem-basesDeLS} and Lemma \ref{lemma-transSSL}.
\end{proof}

\para

In the following results we analyze the bases of $\cL(\cS_\cP)$. So, $\cS,\cR,\cP, \cQ$ and $\overline{S}$ are as the in previous subsections.

\para

\begin{corollary}\label{corollary-condition}
Let $\{m_1,m_2,m_3\}$ a basis of $\cL(\cS_\cP)$. Then, $(m_1:m_2:m_3)$  satisfies Conditions \ref{conditions}.
\end{corollary}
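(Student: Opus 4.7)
The plan is to reduce the statement to the fact that $\cS_\cP$ itself satisfies Conditions \ref{conditions} (as remarked after Lemma \ref{lema-degSR}, a consequence of Lemma \ref{lemma-BSBP}) together with the invariance of all three conditions under left composition with an element of $\myGS$.

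First I would invoke Theorem \ref{theorem-basesDeLS} with $\cM=\cS_\cP$: since $\{m_1,m_2,m_3\}$ is a basis of $\cL(\cS_\cP)$, there exists $L\in\myGS$ such that $(m_1:m_2:m_3)={}^{L}\!\cS_\cP$. This identification is the crux of the argument and reduces everything to a comparison between $\cS_\cP$ and ${}^{L}\!\cS_\cP$.

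Next I would check the three items of Conditions \ref{conditions} for ${}^{L}\!\cS_\cP$. For (1), since $L$ is a projective transformation, $\deg({}^{L}\!\cS_\cP)=\deg(\cS_\cP)$, and $\cS_\cP$ already satisfies $\deg(\cS_\cP)=\deg(\cP)/\sqrt{\deg(\myS)}$. For (2), Lemma \ref{lemmaBsBLs}(1) gives $\myB({}^{L}\!\cS_\cP)=\myB(\cS_\cP)$, and condition (2) for $\cS_\cP$ yields $\myB(\cS_\cP)=\myB(\cP)$. For (3), Lemma \ref{lemmaBsBLs}(2) gives $\mult(A,\myB({}^{L}\!\cS_\cP))=\mult(A,\myB(\cS_\cP))$ for every $A$, and condition (3) for $\cS_\cP$ supplies $\mult(A,\myB(\cS_\cP))=\mult(A,\myB(\cP))/\deg(\myS)$.

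There is no real obstacle here: the statement is essentially a bookkeeping corollary whose content is that Theorem \ref{theorem-basesDeLS} places every basis of $\cL(\cS_\cP)$ in the same left-$\myGS$-orbit as $\cS_\cP$, and Conditions \ref{conditions} are preserved along that orbit via Lemma \ref{lemmaBsBLs}. The only point that requires a little care is ensuring that the rational map $(m_1:m_2:m_3)$ is actually well defined and dominant so that the conclusions of Lemma \ref{lemmaBsBLs} apply; this is guaranteed by Theorem \ref{theorem-basesDeLS} (which produces it as ${}^{L}\!\cS_\cP$, hence birational and thus dominant).
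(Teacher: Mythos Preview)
Your proposal is correct and follows the same approach as the paper, which simply states that the result is a direct consequence of Theorem \ref{theorem-basesDeLS}. You have merely made explicit the details the paper leaves implicit: that Theorem \ref{theorem-basesDeLS} yields $(m_1:m_2:m_3)={}^{L}\!\cS_\cP$ for some $L\in\myGS$, and that Conditions \ref{conditions} are preserved under left composition with $L$ via Lemma \ref{lemmaBsBLs} together with the fact that $\cS_\cP$ itself satisfies them.
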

\begin{proof}
It is a direct consequence of Theorem \ref{theorem-basesDeLS}.
\end{proof}

\para

\begin{corollary}\label{corollary-definitivo}
 If $\cM:=(m_1:m_2:m_3)$ is transversal and satisfies  Conditions \ref{conditions},   then $\{m_1,m_2,m_3\}$ is a basis of $\cL(\cS_\cP)$.
\end{corollary}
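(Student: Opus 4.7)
The plan is to chain together the earlier results so that $\cM$ is forced to be, up to left composition with a projective transformation, essentially $\cS_\cP$ itself, and then to read off that its components span the same linear system.

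First, I would invoke Lemma \ref{Lemma-condition}: since $\cM$ satisfies Conditions \ref{conditions}, it is birational. This is what allows us to apply the material from Section 3 and the preceding subsection to $\cM$. In particular, Lemma \ref{lemma-dim-L(S)} then gives $\dim(\cL(\cM))=2$, so the three generators $m_1,m_2,m_3$ of $\cL(\cM)$ are automatically linearly independent, i.e.\ they form a basis of $\cL(\cM)$.

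Next, I would apply Corollary \ref{C-constructionS3} with the role of $\overline{\cS}$ played by $\cM$. The hypothesis of that corollary is precisely that $\overline{\cS}$ is a birational dominant map of $\proj2$ which satisfies Conditions \ref{conditions} and is transversal, and all of these now hold for $\cM$. Hence there exists $L\in\myGS$ with $\cM={}^{L}\!\cS_\cP$. Lemma \ref{Lemma-LSLSL} then yields
\[
\cL(\cM)=\cL({}^{L}\!\cS_\cP)=\cL(\cS_\cP),
\]
and combining with the previous paragraph we conclude that $\{m_1,m_2,m_3\}$ is a basis of $\cL(\cS_\cP)$, which is exactly the claim.

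I do not anticipate a real obstacle here: the work has already been done in Corollary \ref{C-constructionS3} (which rests on Theorem \ref{T-constructionS2}) and in the elementary Lemmas \ref{lemma-dim-L(S)} and \ref{Lemma-LSLSL}. The only point one must be a little careful about is ensuring that $\cM$ legitimately plays the role of $\overline{\cS}$ in Corollary \ref{C-constructionS3}; but this is immediate, since $\overline{\cS}$ was fixed in Section 4.2 only as an arbitrary dominant rational map satisfying Conditions \ref{conditions}, and its birationality (needed to speak of $\overline{\cR}=\overline{\cS}^{-1}$) was derived precisely via Lemma \ref{Lemma-condition}, which is the same step we use for $\cM$.
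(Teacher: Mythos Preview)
Your proof is correct and follows essentially the same route as the paper's own argument: invoke Corollary \ref{C-constructionS3} to write $\cM={}^{L}\!\cS_\cP$ for some $L\in\myGS$, use Lemma \ref{Lemma-LSLSL} to conclude $\cL(\cM)=\cL(\cS_\cP)$, and finish by noting that $\{m_1,m_2,m_3\}$ are linearly independent. Your version is slightly more explicit than the paper's in that you spell out the appeal to Lemma \ref{Lemma-condition} for birationality and to Lemma \ref{lemma-dim-L(S)} for linear independence, but these are exactly the implicit justifications underlying the paper's shorter proof.
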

\begin{proof}
By Corollary \ref{C-constructionS3}, there exists $L\in \myGS$ such that $\cM=\LcS_{\cP}$. Now, by Lemma \ref{Lemma-LSLSL}, $\cL(\cS_\cP)=\cL(\cM)$. Taking into account that
$\{m_1,m_2,m_3\}$ are linearly independent, we get the result.
\end{proof}

\para

The previous results show that the solution to our problem lies in $\cL(\cS_\cP)$. However, knowing $\cL(\cS_\cP)$ implies knowing $\cS_\cP$, which is essentially our goal. In the following, we see how to achieve $\cL(\cS_\cP)$ by simply knowing $\myB(\cS_\cP)$ and the base point multiplicities; note that, under the hypotheses of this section, this information is given by  $\cP$.

\para

\begin{definition}\label{def-linear-system} Let $\cM$ be a birational transformation of $\proj2$.
We define the \textsf{linear system of base points of $\cM$}, and we denote it by $\myL(\cM)$, as the linear system of curves, of degree $\deg(\cM)$,
\[ \myL(\cM)=\{ f\in \myV_{\deg(\cM)}\,|\,   \mult(A,\myC(f))\geq \sqrt{\mult(A,\myB(\cM))}\,\,\forall A\in \myB(\cM) \} \]
Observe that $\myL(\cM)$ is the $\deg(\cM)$-linear system associated to the effective divisor
\[ \sum_{A\in \myB(\cM)} \sqrt{\mult(A,\myB(\cM))} \,\cdot A  \]
%\[ \sum_{A\in \myB(\cS_\cP)} \sqrt{\mult(A,\myB(\cS_\cP))} \,\cdot A=
% \sum_{A\in \myB(\cP)} \sqrt{\mult(A,\myB(\cP))} \,\cdot A \]
\end{definition}

%\begin{definition}\label{def-linear-system}
%We define the \textsf{linear system of base points of $\cS_\cP$}, and we denote it by %$\myL(\cS_\cP)$, as the linear system of curves, of degree $\deg(\cS_\cP)$,
%\[ \myL(\cS_\cP)=\{ m\in \myV_{\deg(\cS_{\cP})}\,|\,   \mult(A,\myC(m))\geq %\sqrt{\mult(A,\myB(\cS_\cP))}\,\,\forall A\in \myB(\cS_\cP) \} \]
%Observe that $\myL(\cS_\cP)$ is the $\deg(\cS_\cP)$-linear system associated to the effective %divisor
%\[ \sum_{A\in \myB(\cS_\cP)} \sqrt{\mult(A,\myB(\cS_\cP))} \,\cdot A=
% \sum_{A\in \myB(\cP)} \sqrt{\mult(A,\myB(\cP))} \,\cdot A \]
%\end{definition}

\para

\begin{remark} We observe that if $\cM$ satisfies Conditions \ref{conditions}, in particular $\cS_\cP$, then $\myL(\cM)$ is the $\deg(\cS_\cP)$-degree linear system generated by the effective divisor
\[
 \sum_{A\in \myB(\cP)} \sqrt{\mult(A,\myB(\cP))} \,\cdot A \]
\end{remark}

\para

\noindent
The following lemma is a direct consequence  of the definition above.

\para

\begin{lemma}\label{Lemma-myLSLSL}
Let $\cM$ be a birational transformation of $\proj2$.
If $L\in \myGS$ then $\cL(\cM)=\cL({}^{L}\!\cM)$ and $\myL(\cM)=\myL({}^{L}\!\cM)$.
\end{lemma}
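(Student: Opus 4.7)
The statement splits into two assertions: $\cL(\cM)=\cL({}^{L}\!\cM)$ and $\myL(\cM)=\myL({}^{L}\!\cM)$. For the first, I plan to simply invoke Lemma \ref{Lemma-LSLSL}, which was already established for an arbitrary birational transformation of $\proj2$. So all the substantive work is in the second equality, and my plan is to verify that the two linear systems are defined by identical data in the sense of Definition \ref{def-linear-system}, namely the same degree, the same base point set, and the same base point multiplicities.

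First I would check that the ambient projective spaces coincide. Since $L\in \myGS$ is a projective transformation, $\deg(L)=1$, and therefore $\deg({}^{L}\!\cM)=\deg(L\circ \cM)=\deg(\cM)$. Consequently, both $\myL(\cM)$ and $\myL({}^{L}\!\cM)$ sit inside the same ambient space $\myV_{\deg(\cM)}$.

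Next, I would apply Lemma \ref{lemmaBsBLs} with $\cM$ playing the role of $\cS$ (the proof of that lemma uses only birationality of the underlying map, which $\cM$ has by hypothesis). Part (1) gives $\myB(\cM)=\myB({}^{L}\!\cM)$, and part (2) gives $\mult(A,\myB(\cM))=\mult(A,\myB({}^{L}\!\cM))$ for every $A$ in this common base locus. Taking square roots, the effective divisors
\[
\sum_{A\in \myB(\cM)} \sqrt{\mult(A,\myB(\cM))}\cdot A \quad \text{and} \quad \sum_{A\in \myB({}^{L}\!\cM)} \sqrt{\mult(A,\myB({}^{L}\!\cM))}\cdot A
\]
coincide, so the two linear systems they generate in $\myV_{\deg(\cM)}$ are equal, giving $\myL(\cM)=\myL({}^{L}\!\cM)$.

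No real obstacle arises, since the proof reduces to citing previously proved facts. The one point worth being explicit about is that Lemma \ref{lemmaBsBLs}, although originally phrased for the distinguished map $\cS$ of Subsection \ref{subsec-proj2}, is valid for any birational transformation of $\proj2$, and thus applies to $\cM$.
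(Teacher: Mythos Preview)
Your proposal is correct and matches the paper's approach: the paper simply states that the lemma ``is a direct consequence of the definition above'' (Definition \ref{def-linear-system}), and your argument spells out exactly why---invoking Lemma \ref{Lemma-LSLSL} for the first equality and Lemma \ref{lemmaBsBLs} (applied to the birational map $\cM$) together with the degree preservation under $L$ for the second. There is nothing to add.
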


\para

Next lemma relates the $\cL(\cM)$ and $\myL(\cM)$.

\para

\begin{lemma}\label{Lemma-subset}
If $\cM$ is a transversal birational map of $\proj2$, then  $\cL(\cM) \subset \myL(\cM)$.
\end{lemma}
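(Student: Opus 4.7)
The plan is to unwind the definitions. Take an element $f \in \cL(\cM)$; by definition, there exist $(a_1:a_2:a_3)\in\proj2$ such that $f = a_1 m_1 + a_2 m_2 + a_3 m_3$, where $\cM=(m_1:m_2:m_3)$. Clearly $f \in \myV_{\deg(\cM)}$, so it remains to check the multiplicity condition that defines $\myL(\cM)$: namely, $\mult(A, \myC(f)) \geq \sqrt{\mult(A, \myB(\cM))}$ for every $A \in \myB(\cM)$.

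First I would use the elementary fact that for any homogeneous forms $g_1,g_2,g_3$ of the same degree vanishing at $A$, and any scalars $a_i$, one has
\[
\mult(A, \myC(a_1 g_1 + a_2 g_2 + a_3 g_3)) \geq \min\{\mult(A,\myC(g_1)),\mult(A,\myC(g_2)),\mult(A,\myC(g_3))\},
\]
which is immediate from the Taylor expansion at $A$ (all partial derivatives of order less than the minimum vanish on each $g_i$, hence also on the linear combination). Applying this to $g_i = m_i$ gives
\[
\mult(A,\myC(f)) \geq \min\{\mult(A,\myC(m_i))\,|\,i=1,2,3\}.
\]

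Next, using Lemma \ref{lemma-BasePointS} (2) with $L = \mathrm{Id}$ (so that $V_1^L = V_1$), we have
\[
\mult(A,\myC(V_1)) = \min\{\mult(A,\myC(m_i))\,|\,i=1,2,3\}.
\]
Finally, since $\cM$ is transversal, by Definition \ref{def-transversal} we have $\mult(A,\myB(\cM)) = \mult(A,\myC(V_1))^2$, and therefore $\sqrt{\mult(A,\myB(\cM))} = \mult(A,\myC(V_1))$. Chaining the three observations,
\[
\mult(A,\myC(f)) \geq \mult(A,\myC(V_1)) = \sqrt{\mult(A,\myB(\cM))},
\]
which shows $f \in \myL(\cM)$ and hence $\cL(\cM)\subset \myL(\cM)$.

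There is no real obstacle here: the argument is a direct verification that chains a standard inequality on multiplicities of linear combinations, the characterization of $\mult(A,\myC(V_1))$ as the minimum of the $\mult(A,\myC(m_i))$ from Lemma \ref{lemma-BasePointS}, and the definition of transversality. The only point requiring minor care is to note that the inequality for the linear combination is preserved when some $m_i$ happen not to vanish at $A$ (in which case their multiplicity at $A$ is zero and the inequality is trivial), but this situation does not occur for $A \in \myB(\cM)$ because all $m_i$ vanish at base points.
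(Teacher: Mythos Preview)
Your proof is correct and follows essentially the same route as the paper: take $f\in\cL(\cM)$, bound $\mult(A,\myC(f))$ below by $\min_i \mult(A,\myC(m_i))$, identify this minimum with $\mult(A,\myC(V_1))$ via Lemma~\ref{lemma-BasePointS}(2), and then invoke transversality to rewrite it as $\sqrt{\mult(A,\myB(\cM))}$. The only difference is that you spell out the Taylor-expansion justification for the first inequality, which the paper simply states.
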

\begin{proof}
Let $\cM=(m_1:m_2:m_3)$, let $f\in \cL(\cM)$ and $A\in \myB(\cM)$. Then, $\deg(f)=\deg(\cM)$. On the other hand
\[ \begin{array}{cclr}
\mult(A,\myC(f)) & \geq & \min \{ \mult(A,\myC(m_i))\,|\, i\in \{1,2,3\}\} & \\
&=& \mult(A,\myC(V_{1})) & \text{(see Lemma \ref{lemma-BasePointS} (2))} \\
& =&\sqrt{\mult(A,\myB(\cM))}  & \text{($\cM$ is transversal).}    \end{array}
 \]
 Therefore, $f\in \myL(\cM)$.
 \end{proof}

\para

\begin{lemma}\label{lemma-dim2}
If $\cM$ is a transversal birational map of $\proj2$, then  $\dim(\myL(\cM))=2$.
\end{lemma}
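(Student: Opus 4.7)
The plan is to prove $\myL(\cM) = \cL(\cM)$; since $\dim\cL(\cM)=2$ by Lemma \ref{lemma-dim-L(S)}, this will immediately yield the claim. The inclusion $\cL(\cM)\subset\myL(\cM)$ is given by Lemma \ref{Lemma-subset}, so the entire content lies in the reverse inclusion: every form $f$ of degree $d:=\deg(\cM)$ whose multiplicity at each base point $A$ of $\cM$ is at least $m_A:=\sqrt{\mult(A,\myB(\cM))}$ must be a $\K$-linear combination of $m_1,m_2,m_3$, where $\cM=(m_1:m_2:m_3)$.

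The approach is by contradiction. Suppose $f\in\myL(\cM)\setminus\cL(\cM)$. Since $m_1,m_2,m_3$ are $\K$-linearly independent by $\dim\cL(\cM)=2$, the tuple $(m_1,m_2,m_3,f)$ is then linearly independent, so one may form the rational map $\Phi=(m_1:m_2:m_3:f):\proj2\dashrightarrow\projtres$. Because $\cM$ sits inside its first three coordinates, $\Phi$ is a dominant parametrization of an irreducible surface $\Sigma\subset\projtres$, and the assumed linear independence forces $\Sigma$ to lie in no hyperplane, so $\deg(\Sigma)\geq 2$.

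The key step is to bound $\mult(\myB(\Phi))$ from below. After harmlessly replacing $\cM$ by $L\circ\cM$ for a sufficiently generic $L\in\myGS$ (which affects neither $\cL(\cM)$, nor $\myL(\cM)$, nor transversality, by Lemmas \ref{Lemma-LSLSL} and \ref{lemma-transSSL}), Lemma \ref{lemma-singularlocus} guarantees $\mult(A,\myC(m_i))=m_A$ for $i=1,2,3$ at every $A\in\myB(\cM)$. Combined with the bound $\mult(A,\myC(f))\geq m_A$ provided by $f\in\myL(\cM)$, Lemma \ref{lemma-BasePointP}\,(1) gives $\mult(A,\myC(W_1^{\Phi}))=\mult(A,\myC(W_2^{\Phi}))=m_A$, and Bezout's inequality then yields $\mult(A,\myB(\Phi))=\mult_A(\myC(W_1^{\Phi}),\myC(W_2^{\Phi}))\geq m_A^2=\mult(A,\myB(\cM))$. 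Summing over $A\in\myB(\cM)\subset\myB(\Phi)$ and recalling that $\mult(\myB(\cM))=d^2-1$ for $\cM$ birational, one obtains $\mult(\myB(\Phi))\geq d^2-1$.

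The contradiction then comes from the degree formula for surface parametrizations, $d^2=\deg(\Sigma)\cdot\mapdeg(\Phi)+\mult(\myB(\Phi))$ (Corollary 5 in \cite{CoxPerezSendra2020}), which forces $\deg(\Sigma)\cdot\mapdeg(\Phi)\leq 1$ and hence $\deg(\Sigma)=1$, clashing with $\deg(\Sigma)\geq 2$. I expect the main obstacle to be precisely this Bezout-based lower bound on $\mult(\myB(\Phi))$: it relies crucially on the transversality of $\cM$ to align all three multiplicities $\mult(A,\myC(m_i))$ at every base point, and one must first verify carefully that passing from $\cM$ to $L\circ\cM$ for generic $L$ preserves every piece of data used in the argument.
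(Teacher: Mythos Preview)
Your argument is correct and takes a genuinely different route from the paper's proof. The paper argues directly in the plane: assuming $\dim\myL(\cM)>2$, it constructs two curves $\myC(f),\myC(g)$ of degree $\deg(\cM)$ with $f\in\cL(\cM)$ chosen so that $\myC(f)$ is irreducible (via Lemma \ref{lemma-rat-Si}) and $g\in\myL(\cM)$ forced to pass through three carefully chosen extra points $B_1,B_2,B_3\notin\myB(\cM)$; it then checks $\myC(f)\neq\myC(g)$ and applies B\'ezout to their intersection, obtaining $\deg(\cM)^2\geq\mult(\myB(\cM))+2=\deg(\cM)^2+1$. Your approach instead lifts the problem to $\projtres$ by packaging an extraneous $f$ into a surface parametrization $\Phi=(m_1:m_2:m_3:f)$ and invokes the degree formula $\deg(\Phi)^2=\deg(\Sigma)\,\mapdeg(\Phi)+\mult(\myB(\Phi))$, reaching a contradiction with $\deg(\Sigma)\geq 2$. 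Conceptually both proofs hinge on the same identity $\mult(\myB(\cM))=\deg(\cM)^2-1$ (Corollary 7 in \cite{CoxPerezSendra2020}), and both use the transversality through Lemma \ref{lemma-singularlocus} to align the multiplicities $\mult(A,\myC(m_i))=m_A$. Your version is cleaner in that it avoids the somewhat delicate construction of the auxiliary points $B_1,B_2,B_3$ and the verification that the two plane curves share no component; the paper's version is more self-contained, staying entirely within plane-curve intersection theory and not requiring the surface degree formula as a black box. One minor remark: the exact label ``Corollary~5'' for the degree formula may need adjusting (in this paper Corollary~5 of \cite{CoxPerezSendra2020} is invoked for the base-point-free case), but the formula itself is standard and certainly available from \cite{CoxPerezSendra2020}.
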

\begin{proof} Let $\cM=(m_1:m_2:m_3)$. By Lemmas \ref{lemma-dim-L(S)} and \ref{Lemma-subset}, we have that $\dim(\myL(\cM))\geq 2$.
Let us assume that $\dim(\myL(\cM))=k>2$ and let $\{n_1,\ldots,n_{k+1}\}$ be a basis of $\myL(\cM)$ where $n_1=m_1,n_2=m_2,n_3=m_3$. Then $$\myL(\cM)=\{\lambda_1 n_1+\cdots +\lambda_{k+1} n_{k+1}\,|\, (\lambda_1:\cdots:\lambda_{k+1})\in \mathbb{P}^{k+1}(\K)\}.$$
Now, we take three points in $\proj2$ that will be crucial later. For their construction, we first
consider an open Zariski subset $\Sigma\subset \proj2$ where $\cM: \Sigma \rightarrow \cM(\Sigma)\subset \proj2$ is a bijective map. Then, $\cM(\Sigma)$ is a constructible set of $\proj2$ (see e.g. Theorem 3.16 in \cite{Harris:algebraic}). Thus, $\proj2 \setminus \cM(\Sigma)$ can only contain finitely many lines. On the other hand, we consider the open subset $\Omega_2\subset \myGS$ introduced in Lemma \ref{lemma-rat-Si}, and we take $L=(L_1:L_2:L_3)\in \Omega_2$ such that a non-empty open subset of $\myC(L_1)$ is included in  $\cM(\Sigma)$.
We take three points $B_1,B_2,B_3\in\Sigma$  (this, in particular, implies that $B_1,B_2,B_3\not\in \myB(\cM)$) such that:
\begin{enumerate}
\item $\cM(B_1)\neq \cM(B_2)$
\item  $\cM(B_1),\cM(B_2)\in   \myC(L_1)$,
\item $\cM(B_3)\not\in \myC(L_1)$; note that $\cM(B_1),\cM(B_2),\cM(B_3)$ are not on a line
\end{enumerate}
Since $\cM(B_1),\cM(B_2),\cM(B_3)$ are not collinear,  the system
\[ \left\{ \sum_{i=1}^{k+1} \lambda_{i} n_{i}(B_j)=0\right\}_{j\in\{1,2,3\}} \]
has solution. Let $(b_1:\cdots:b_{k+1})$ be a solution. Then, we consider the polynomials (say that $L_1:=a_1 t_1+a_2 t_2+ a_3 t_3$)
\[ f(\ot):=L_1(\cM)=a_1m_1+a_2m_2+a_3m_3,\,\,\, g(\ot):=b_1n_1+\cdots+b_{k+1} n_{k+1}. \]
 We have that $\myC(f)$ is irreducible because $L\in \Omega_2$. Moreover, $\deg(\myC(f))=\deg(\myC(g))$.
 In addition, $\myC(f)\neq \myC(g)$: indeed, $B_3\in \myC(g)$ and $B_3\not\in \myC(f)$ because otherwise
 \[ \left( \begin{array}{ccc} m_1(B_1) & m_2(B_1) & m_3(B_1) \\
 m_1(B_2) & m_2(B_2) & m_3(B_2) \\
 m_1(B_3) & m_2(B_3) & m_3(B_3) \\
 \end{array}\right) \left(\begin{array}{c} a_1 \\ a_2 \\ a_3 \end{array}\right)= \left(\begin{array}{c} 0 \\ 0 \\ 0 \end{array}\right),\]
 and since  $\cM(B_1),\cM(B_2),\cM(B_3)$ are not collinear we get that $a_1=a_2=a_3=0$ that is a contradiction.  Therefore, $\myC(f)$ and $\myC(g)$ do not share components. Thus, by B\'ezout's theorem the number of intersections of $\myC(f)$ and $\myC(g)$, properly counted, is $\deg(\cM)^2$.
 In addition, we oberve that $f\in \cL(\cM)\subset \myL(\cM)$ (see Lemma \ref{Lemma-subset}) and $g\in  \myL(\cM)$. Thus,
 \begin{equation}\label{eq-inclusion}
 \myB(\cM)\cup \{B_1,B_2\} \subset \myC(f)\cap \myC(g).
 \end{equation}
Therefore
\[
\begin{array}{cclr}
\deg(\cM)^2 & =& \displaystyle{\sum_{A\in \myC(f)\cap \myC(g)} \mult_{A}(\myC(f),\myC(g))} & \\
& \geq & \!\!\displaystyle{\sum_{A\in \myB(\cM)} \mult_{A}(\myC(f),\myC(g)) +\!\! \sum_{A\in \{B_1,B_2\}} \mult_{A}(\myC(f),\myC(g))} & \text{(see \eqref{eq-inclusion})}   \\
& \geq & \displaystyle{\sum_{A\in \myB(\cM)} \mult_{A}(\myC(f),\myC(g)) + 2} & \text{($B_1,B_2\not\in \myB(\cM)$)}   \\
& \geq & \displaystyle{\sum_{A\in \myB(\cM)} \mult(A,\myC(f))\,\mult(A,\myC(g)) + 2} &  \\
& \geq & \displaystyle{\sum_{A\in \myB(\cM)} \mult(A,\myB(\cM)) + 2} & \text{($f,g\in \myL(\cM))$)} \\
& = & \mult(\myB(\cM))+2 & \text{(see Def. \ref{definition-BS})} \\
&= & \deg(\cM)^2+1 & \text{(see Cor. 7 in \cite{CoxPerezSendra2020})}.
\end{array}
\]
which is a contradiction.
\end{proof}

\begin{theorem}\label{theorem-dim2myL(S)}
If $\cM$ is a transversal birational map of $\proj2$, then $\cL(\cM)=\myL(\cM)$.
\end{theorem}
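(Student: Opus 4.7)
The proof will be essentially a one-line consequence of the machinery already assembled, so there is no real obstacle to overcome at this point; the whole burden has been shifted into the preceding lemmas. The plan is simply to combine the containment with the dimension count.

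First I would invoke Lemma \ref{Lemma-subset}, which gives the inclusion $\cL(\cM) \subset \myL(\cM)$ under the transversality hypothesis on $\cM$. Next I would recall that $\cL(\cM)$ and $\myL(\cM)$ are both projective linear subspaces of the ambient projective space $\myV_{\deg(\cM)}$ of curves of degree $\deg(\cM)$.

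Then I would read off the dimensions: by Lemma \ref{lemma-dim-L(S)}, since $\cM$ is birational, $\dim(\cL(\cM)) = 2$, and by Lemma \ref{lemma-dim2}, since $\cM$ is transversal and birational, $\dim(\myL(\cM)) = 2$ as well. A linear inclusion of projective subspaces of equal finite dimension is an equality, so $\cL(\cM) = \myL(\cM)$, which concludes the proof. The real content sits in Lemma \ref{lemma-dim2}, whose proof uses the B\'ezout-type counting argument together with Corollary 7 of \cite{CoxPerezSendra2020}; once that is in hand the present theorem is immediate.
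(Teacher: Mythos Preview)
Your proof is correct and matches the paper's own argument almost verbatim: the paper simply cites Lemma~\ref{Lemma-subset} for the inclusion $\cL(\cM)\subset \myL(\cM)$ and then invokes Lemmas~\ref{lemma-dim-L(S)} and~\ref{lemma-dim2} to conclude by equality of dimensions. Your added commentary that the substantive work has been pushed into Lemma~\ref{lemma-dim2} is accurate.
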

\begin{proof}
By Lemma \ref{Lemma-subset}, $\cL(\cM)\subset \myL(\cM)$. Thus, using Lemmas \ref{lemma-dim-L(S)} and \ref{lemma-dim2}, we get the result.
\end{proof}

\section{Algorithm and Examples}\label{section-algorithms-examples}

\para

In this section, we use the previous results to derive an algorithm for determining polynomial parametrizations of rational surface, under the conditions stated in Subsection \ref{subsec-general assumptions}. For this purpose we first introduce an auxiliary algorithm for testing the transversality of parametrizations. In addition, we observe that we require to the input parametrization to be proper (i.e. birational). This can be checked for instance using the algorithms in \cite{PS-grado}.

\para

\begin{algorithm}[H] \caption{Transversality of a Parametrization}\label{AlgTrans}
\begin{algorithmic}[1]
\smallskip
\REQUIRE A rational proper projective parametrization ${\mathcal
P}(\ot)$
of an algebraic surface $\myS$.
\\
\STATE Compute  $\myB({\cP})=\bigcap_{i=1}^{4} \myC(p_i)$ and  $\mult(A, \myB({\cP}))=\mult_{A}(\myC(W_1),\myC(W_2))$ for every  $A\in \myB({\cP})$.

\IF {$\exists\, A\in \myB({\cP})$, such that $\mult(A, \myB({\cP}))\neq m_A^2$ for some  $m_A\in \Bbb N,\,m_A\geq 1$}
\STATE  {\bf return} ``\textsf{$\cP$ is not transversal''}.
\ENDIF
\IF {$\forall \, A\in \myB({\cP})$,  $\gcd(T_1, T_2, T_3, T_4) = 1$, where $T_i$ is the product of
the tangents, counted with multiplicities, to $\myC(p_i)$ at $A$,}
\STATE {\bf return} ``\textsf{$\cP$ is transversal}''   {\bf else return} ``\textsf{$\cP$ is not transversal}''.
\ENDIF
\end{algorithmic}
\end{algorithm}

\para

Observe that Step 2 in Algorithm \ref{AlgTrans} provides a first direct filter to detect some non-transversal parametrizations, and Step 5 applies the characterization in Lemma \ref{Lemma-traPgcd}. This justifies the next theorem

\para

\begin{theorem}
Algorithm \ref{AlgTrans} is correct.
\end{theorem}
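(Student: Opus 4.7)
The plan is to verify that each return statement of Algorithm \ref{AlgTrans} matches the characterization of transversality; concretely, to map Step 2 to Definition \ref{def-transversalP} and Steps 5--6 to Lemma \ref{Lemma-traPgcd}, treating the empty-base-locus case separately.

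First, I would dispose of the case $\myB(\cP)=\emptyset$. In this situation the existential in Step 2 has empty range so the algorithm falls through to Step 5, whose universal quantifier is vacuously satisfied; hence the output is ``$\cP$ is transversal''. This agrees with Definition \ref{def-transversalP}, under which an empty base locus makes $\cP$ transversal by definition.

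Next, assume $\myB(\cP)\neq\emptyset$. The key observation for the soundness of Step 2 is that, by Definition \ref{def-transversalP}, if $\cP$ is transversal then for every $A\in\myB(\cP)$ one has $\mult(A,\myB(\cP)) = \mult(A,\myC(W_1))^2$, so the multiplicity must be the square of a nonnegative integer. The contrapositive yields exactly Step 2: if some $A\in\myB(\cP)$ fails the perfect-square test, then $\cP$ cannot be transversal and the algorithm correctly returns ``not transversal''. If instead the algorithm passes Step 2, it executes Step 5, which is the precise gcd condition in the equivalence of Lemma \ref{Lemma-traPgcd}; the output in Step 6 therefore records ``transversal'' iff that equivalent condition holds for every base point, i.e.\ iff $\cP$ is transversal.

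There is no real obstacle in this proof: the correctness reduces to (i) the vacuous handling of $\myB(\cP)=\emptyset$, (ii) a direct application of Definition \ref{def-transversalP} in contrapositive form for Step 2, and (iii) a direct application of Lemma \ref{Lemma-traPgcd} for Steps 5--6. One might note in passing that Step 2 is logically redundant (Steps 5--6 alone would decide transversality) but is retained as an efficient filter that avoids the tangent computation whenever the multiplicity pattern already precludes transversality.
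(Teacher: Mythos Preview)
Your proposal is correct and follows exactly the paper's own justification: Step 2 is a necessary-condition filter coming from Definition \ref{def-transversalP}, and Steps 5--6 invoke the characterization of Lemma \ref{Lemma-traPgcd}. The paper's proof is in fact just the one-sentence remark preceding the theorem, so your argument is a faithful (and more explicit) expansion of it.
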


\para

The following algorithm is the central algorithm of the paper.

\para

\begin{algorithm}[H] \caption{Birational Polynomial Reparametrization for Surfaces}\label{AlgPoli}
\begin{algorithmic}[1]
\smallskip
\REQUIRE A rational proper projective parametrization ${\mathcal
P}(\ot)=\left({p_{1}(\ot)}: {p_{2}(\ot)}:\,{p_{3}(\ot)}:\,{p_{4}(\ot)}\right)$
  of an algebraic surface $\myS$,   with $\gcd(p_1,\ldots,p_4)=1$.
  \IF{$p_4(\ot)$ is   of the form $(a_1t_1+a_2t_2+a_3t_3)^{\deg(\cP)}$,}
   \STATE
  Consider the projective transformation $L=(t_i,t_j,a_1t_1+a_2t_2+a_3t_3)$ where $i,j\in \{1,2,3\}$ are different indexes such if $\{k\}=\{1,2,3\}\setminus \{i,j\}$, then $a_k\neq 0$.
\STATE
  Compute the inverse $L^{-1}$ of $L$ and   {\bf return} ``$\cP(L^{-1})$  \textsf{is a  rational proper polynomial parametrization of $\myS$}''.
\ENDIF
\STATE Apply Algorithm  \ref{AlgTrans} to check whether $\cP$ is transversal. In the affirmative case go to the next Step. Otherwise   {\bf return} ``\textsf{Algorithm \ref{AlgPoli} is not applicable}''.
\STATE Compute $\deg(\myS)$ and $\deg({\cS})=\deg(\cP)/\sqrt{\deg(\myS)}$.

%\STATE Compute   $ \myB({\cP})=\bigcap_{i=1}^{4} \myC(p_i)$ and  $\mult(A, \myB({\cP}))=\mult_{A}(\myC(W_1),\myC(W_2))$ for every  $A\in \myB({\cP})$ (see \cite{CoxPerezSendra2020}).

\STATE  Compute the $\deg({\cS})$-linear system associated to the effective divisor
\[\myL:=\sum_{A\in \myB({\cP})}   \sqrt{\mult(A, \myB({\cP}))/\deg(\myS)} \cdot A\]
\STATE Determine   $\overline{\cS}(\ot)=(\overline{s}_1(\ot): \overline{s}_2(\ot): \overline{s}_3(\ot))$, where $\{\overline{s}_1, \overline{s}_2, \overline{s}_3\}$ is a basis  of $\myL$.

\STATE Compute $\overline{\cR}(\ot)=\overline{\cS}^{-1}(\ot)$.

\STATE Compute $\cQ(\ot)=(q_1:q_2:q_3:q_4)$, where $\cQ(\ot)=\cP(\overline{\cR}(\ot))$.
\IF {$q_4(\ot)$ is   of the form $(a_1t_1+a_2t_2+a_3t_3)^{\deg(\cQ)}$,}
\STATE {\bf return} ``$\cQ(L^{-1})$ (where $L$ is as in Step 2) \textsf{is a  rational proper polynomial parametrization of $\myS$}'' \textbf{else return} ``\textsf{$\myS$ does not admit a polynomial proper parametrization with empty base locus}''.
\ENDIF
\end{algorithmic}
\end{algorithm}

\para

\begin{theorem}
Algorithm \ref{AlgPoli} is correct.
\end{theorem}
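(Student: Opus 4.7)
The plan is to verify both branches of the algorithm: the fast path (Steps 1--4) and the main construction (Steps 5--11). First I would dispatch the fast path. If $p_4=(a_1t_1+a_2t_2+a_3t_3)^{\deg(\cP)}$, the construction of $L$ in Step 2 picks the indices so that the three forms $t_i,t_j,a_1t_1+a_2t_2+a_3t_3$ are linearly independent (this is exactly the reason for requiring $a_k\neq 0$), whence $L\in \myGS$. Then $\cP\circ L^{-1}$ is birational as the composition of birational maps, and its fourth component becomes $t_k^{\deg(\cP)}$, giving a polynomial proper parametrization of $\myS$.

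For Steps 5--11 I would use the Section 4 machinery. Under the transversality of $\cP$ (confirmed by Step 5 via Algorithm \ref{AlgTrans}) and the standing hypothesis on $\myS$, fix any polynomial birational $\cQ_0$ with $\myB(\cQ_0)=\emptyset$ and set $\cS_\cP:=\cQ_0^{-1}\circ\cP$. Theorem \ref{theorem-transversalSiffP} gives the transversality of $\cS_\cP$; Lemma \ref{lemma-BSBP} gives $\myB(\cS_\cP)=\myB(\cP)$ together with $\mult(A,\myB(\cS_\cP))=\mult(A,\myB(\cP))/\deg(\myS)$ for every $A\in\myB(\cP)$; and the identity $\deg(\cS_\cP)=\deg(\cP)/\sqrt{\deg(\myS)}$ recalled from \cite{CoxPerezSendra2020} matches Step 5. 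Consequently the $\deg(\cS_\cP)$-linear system $\myL$ built in Step 6 is exactly $\myL(\cS_\cP)$ of Definition \ref{def-linear-system}. The pivotal appeal would then be to Theorem \ref{theorem-dim2myL(S)}: transversality forces $\myL(\cS_\cP)=\cL(\cS_\cP)$, which has dimension $2$ by Lemma \ref{lemma-dim-L(S)}, so the basis $\{\overline{s}_1,\overline{s}_2,\overline{s}_3\}$ chosen in Step 7 is a basis of $\cL(\cS_\cP)$.

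From Theorem \ref{theorem-basesDeLS} there exists $L\in\myGS$ with $\overline{\cS}=L\circ\cS_\cP$. Hence
\[ \cQ=\cP\circ\overline{\cR}=\cP\circ\cS_\cP^{-1}\circ L^{-1}=\cQ_0\circ L^{-1}, \]
which is birational, has empty base locus (inherited from $\cQ_0$ since $L^{-1}$ is a linear automorphism of $\proj2$), and is almost polynomial because composing the polynomial $\cQ_0$ with $L^{-1}\in\myGS$ turns its fourth component into a power of a linear form. The test in Step 10 therefore succeeds, and Step 11 finishes by the fast-path argument applied to $\cQ$. Conversely, if Step 10 succeeds, then $\cQ\circ L^{-1}$ is visibly a polynomial birational parametrization of $\myS$ with empty base locus, so the ``else'' branch can only be reached when no such $\cQ_0$ exists, justifying the negative output. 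The principal obstacle is showing that the divisor in Step 6 really determines $\cL(\cS_\cP)$: this equality rests squarely on transversality through Theorem \ref{theorem-dim2myL(S)}, since without transversality $\myL(\cS_\cP)$ could strictly contain $\cL(\cS_\cP)$ and a basis extracted in Step 7 would fail to recover $\cS_\cP$ up to left composition by $\myGS$.
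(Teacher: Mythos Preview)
Your proof is correct and follows essentially the same route as the paper: verify the fast path directly, then for the main branch use Theorem~\ref{theorem-transversalSiffP} to obtain transversality of $\cS_\cP$, invoke Theorem~\ref{theorem-dim2myL(S)} to identify the linear system $\myL$ computed in the algorithm with $\cL(\cS_\cP)$, and apply Theorem~\ref{theorem-basesDeLS} to conclude $\overline{\cS}=L\circ\cS_\cP$ for some $L\in\myGS$, whence $\cQ=\cQ_0\circ L^{-1}$ is almost polynomial with empty base locus. Your treatment of the negative branch (via the contrapositive: if a polynomial $\cQ_0$ with empty base locus existed, Step~10 would necessarily succeed) is in fact more explicit than the paper's, which simply remarks that failure of the linear-form test forces $\myB(\cQ)\neq\emptyset$; note only that a couple of your step-number references are off by one relative to the algorithm's actual numbering.
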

\begin{proof}
For the correctness of the first steps (1-4) we refer to the preamble in Section \ref{section-properPolRep} where the almost polynomial parametrizations are treated. For the rest of the steps,  we use the notation introduced in Section \ref{section-properPolRep} and we assume the hypotheses there, namely, $\cQ(\cS_{\cP})=\cP$ and $\myB(\cQ)=\emptyset$. Since $\cP$ is transversal, by Theorem \ref{theorem-transversalSiffP}, we have that $\S_\cP$ is transversal. Now, by Theorem \ref{theorem-dim2myL(S)}, $\myL=\cL(\cS_\cP)$. In this situation, by Theorem \ref{theorem-basesDeLS}, $\overline{S}=\LcS_{\cP}$ for some $L\in \myGS$. Therefore, $\cP(\overline{\cR})$ has to be almost polynomial, and hence the last step generates a polynomial parametrization. If the fourth component of $\cQ$, namely $q_4$ is not the power of a linear form, then   $\myB(\cQ)\neq \emptyset$.
\end{proof}

\para

\begin{remark} Let us comment some consequences and  computational aspects involved in the execution of the previous algorithms.
\begin{enumerate}
\item We observe that if Algorithm \ref{AlgPoli} returns a parametrization, then it is polynomial and its base locus is empty.
    \item In order to check the properness of $\cP$, one may apply, for instance,  the results in \cite{PS-grado} and, for determining   $\deg(\myS)$,   one may apply, for instance, the results in \cite{PS-ISSAC}, \cite{JSC08} or  \cite{PSV-Singularities}. For the computation of  $\overline{\cR}$ one may apply well known elimination techniques as resultants or Gr\"{o}bner basis; see e.g. \cite{schichoBir}.
\item In different steps of both algorithms one need to deal with the base points. Since the base locus is zero-dimensional, one may consider a decomposition of its elements in families of conjugate points, so that all further step can be performed exactly by introducing algebraic extensions of the ground field. For further details on how to deal with conjugate points we refer to Section 3.3. in \cite{SWP}
\end{enumerate}
\end{remark}

\para

\para

We finish this section illustrating the algorithm with some examples. The first two examples provide polynomial parametrizations, while in the third the algorithm detects that the input parametrization, although proper, is not transversal.

\para

\begin{example}\label{E-algorithm1} Let $\cP(\ot)=(p_1(\ot): p_2(\ot): p_3(\ot): p_4(\ot))$ be a rational parametrization of an algebraic surface $\myS$, where
\[
\begin{array}{ccl}
p_1&=& -6  t_3^4  t_1  t_2+6  t_3^2  t_2^2  t_1^2- t_3  t_2  t_1^4-2  t_3  t_2^3  t_1^2+5  t_3^3  t_1^2  t_2+3  t_3^3  t_1  t_2^2- t_2^6+3  t_3^2  t_1^4+3  t_3^2  t_2^4- \\
\noalign{\vspace*{1mm}}
&&  t_3  t_2^5-3  t_1^4  t_2^2-3  t_1^2  t_2^4+ t_3^3  t_1^3-6  t_3^4  t_1^2+3  t_3^5  t_1+3  t_3^3  t_2^3-6  t_3^4  t_2^2+2  t_3^5  t_2- t_1^6. \\
\noalign{\vspace*{2mm}}
p_2 &=& -(t_1- t_3)  t_3 (t_2^2+ t_1^2- t_1  t_3) (t_2^2+ t_1^2-2  t_3^2+ t_1  t_3).\\
\noalign{\vspace*{2mm}}
p_3 &=&  t_3^2  t_2^3  t_1+ t_3^2  t_1^3  t_2-3  t_3^4  t_1  t_2+39  t_3^2  t_2^2  t_1^2-8  t_3  t_2^2  t_1^3-4  t_3  t_1  t_2^4-4  t_3  t_2  t_1^4-8  t_3  t_2^3  t_1^2 +8  t_3^3  t_1^2  t_2
 \\
 \noalign{\vspace*{1mm}}
  &&+6  t_3^3  t_1  t_2^2+6  t_3^6-5  t_2^6-4  t_3  t_1^5+20  t_3^2  t_1^4+19  t_3^2  t_2^4-4  t_3  t_2^5-15  t_1^4  t_2^2-15  t_1^2  t_2^4+8  t_3^3  t_1^3\\
  \noalign{\vspace*{1mm}}&& -29  t_3^4  t_1^2+4  t_3^5  t_1+7  t_3^3  t_2^3-22  t_3^4  t_2^2-2  t_3^5  t_2-5  t_1^6. \\
\noalign{\vspace*{1mm}}
p_4  &=& (t_1^2+ t_2^2- t_3^2)^3.
\end{array}
\]
Applying the results in \cite{PS-grado}, one deduces that $\cP$ is proper.
We apply  Algorithm \ref{AlgPoli} in order to compute a  rational proper polynomial parametrization ${\cal Q}(\ot)$ of $\myS$, without base points, if it exists. Clearly, $\cP$ is not almost polynomial and hence steps 1-4 does not apply. In Step 5, we perform   Algorithm \ref{AlgTrans}. The base locus is (we denote by $\pm \imath$ the square roots of $-1$) $$ \myB({\cP})=\bigcap_{i=1}^{4} \myC(p_i)=\{(1:0:1),\,(1:\imath:0),\,(1:-\imath:0)\}.$$ Moreover, it holds that   $$\mult(A, \myB({\cP}))=\mult_{A}(\myC(W_1),\myC(W_2))=9, \,\,\forall A\in \myB({\cP}.)$$ Therefore, for every  $A\in \myB({\cP})$ we have that $\mult(A, \myB({\cP}))=m_A^2$ for some  $m_A\in \Bbb N,\,m_A\geq 1$. Thus, the necessary condition in Algorithm \ref{AlgTrans} is fulfilled. In addition, one may also check that the gcd of the tangents is 1, for each base point. As a consequence, we deduce that $\cP$ is transversal.

In Step 6 of Algorithm \ref{AlgPoli},   we get that   $\deg(\myS)=9$ (see  \cite{PSV-Singularities}).
Now, using that
$$\deg({\cS})=\deg(\cP)/\sqrt{\deg(\myS)}=6/3=2,$$ and that $$\mult(A, \myB({\cS}))=\mult(A, \myB({\cP}))/\deg(\myS)=9/9=1\,\quad  \text{for every  $A\in \myB({\cP})$},$$
we compute the $2$-linear system associated to the effective divisor
\[\sum_{A\in \myB({\cP})}    A.\]
For this purpose, one considers a generic polynomial of degree $2$ with undetermined coefficients (note that we have $6$ undetermined coefficients). We impose the three conditions, i.e.  $\{(1:0:1),\,(1:\pm \imath:0) \}$ should be simple points, and we get
\[\myL:=\lambda_1(-9t_1^2-9t_2^2+9t_1t_3+t_2t_3)+\lambda_2(-10 t_1^2-10 t_2^2+9 t_1t_3+t_3^2)+\lambda_3(t_1^2+t_2^2-t_3^2).\]
Let  \[\overline{\cS}(\ot)=(\overline{s}_1 : \overline{s}_2 : \overline{s}_3 )= (-9t_1^2-9t_2^2+9t_1t_3+t_2t_3: -10 t_1^2-10 t_2^2+9 t_1t_3+t_3^2: t_1^2+t_2^2-t_3^2),\] where $\{\overline{s}_1, \overline{s}_2, \overline{s}_3\}$ is a basis of $\myL$
Next, we compute $$\overline{\cR}(\ot)=\overline{\cS}^{-1}(\ot)=(\overline{r}_1(\ot): \overline{r}_2(\ot): \overline{r}_3(\ot))=$$
where
\[\begin{array}{ccl}
\overline{r}_1&=&81  t_1^2-162  t_1  t_2-162  t_1  t_3+71  t_3^2+151  t_2  t_3+80  t_2^2,
\\
\overline{r}_2&=& -9 (2 t_2+11  t_3) (t_1- t_2- t_3), \\
\overline{r}_3&=&181  t_3^2+82  t_2^2+81  t_1^2-162  t_1  t_2+182  t_2  t_3-162  t_1  t_3.
\end{array}\]
In the last step,  the algorithm returns
$$\cQ(\ot)=\cP(\overline{\cR}(\ot))=(t_1^3+ t_2  t_3^2- t_1  t_3^2- t_3^3: t_2(t_2- t_3)(t_2+ t_3):  t_2^3+ t_1  t_2^2+ t_3  t_2  t_1-4  t_1  t_3^2-5  t_3^3:  t_3^3)$$
 that is a  rational proper polynomial parametrization of $\myS$ with empty base locus. Note that the affine polynomial parametrization is given as
$$(t_1^3+ t_2- t_1-1,\, t_2(t_2- 1)(t_2+ 1),  t_2^3+ t_1  t_2^2+ t_2  t_1-4  t_1-5).$$
Observe that in this example we have introduced  $\pm \imath$. Nevertheless we could have considered conjugate points. More precisely, the base locus decomposes as
\[ \{(1:0:1)\} \cup \{(1:s:0)\,|\, s^2+1=0\} \]
Then, all remaining computations could have been carried out working in the field extension $\mathbb{Q}(\alpha)$ where $\alpha^2+1=0$.
\end{example}

\para

\begin{example}\label{E-algorithm2} Let $\cP(\ot)=(p_1(\ot): p_2(\ot): p_3(\ot): p_4(\ot))$ be a rational parametrization of an algebraic surface $\myS$, where\\
\[
\begin{array}{ccl}
p_1& = & \frac{2891876933101
}{7056}   t_2^4  t_3^2- \frac{94253497}{42}   t_2^5  t_3+
\frac{79182089}{24}   t_1^5  t_3- \frac{15185833}{35}   t_2^5  t_1+
\frac{230745016769}{19600}   t_2^4  t_1^2
 \\
\noalign{\vspace*{1mm}} &&
-
\frac{789948757}{280}   t_1^4  t_2^2-
\frac{314171}{4}   t_1^5  t_2-
\frac{3324893202046}{2205}   t_2^3  t_3^2  t_1+
\frac{17297334852139}{29400}   t_2^2  t_3^2  t_1^2\\
\noalign{\vspace*{1mm}} &&+ \frac{835536822991}{5880}   t_2^4  t_3  t_1
- \frac{3567593339657}{14700}   t_2^3  t_3  t_1^2+ \frac{8869391921}{420}   t_1^4  t_2  t_3+ \frac{199437407}{140}   t_1^3  t_2^3\\
\noalign{\vspace*{1mm}} &&+ \frac{56021820649}{144}   t_1^4  t_3^2+ \frac{925548000997}{630}   t_1^3  t_2  t_3^2
\\
\noalign{\vspace*{1mm}} &&- \frac{35094007283}{210}   t_1^3  t_2^2  t_3+ \frac{28561}{4}   t_2^6+ \frac{3455881}{16}   t_1^6,
 \\
\noalign{\vspace*{3mm}}
p_2 &=&  - \frac{1097019300247}{2352}   t_2^4  t_3^2- \frac{246980149}{56}   t_2^5  t_3
-\mbox{\scriptsize{$2485483$}}\,  t_1^5  t_3- \frac{32737835}{56}   t_2^5  t_1- \frac{35410335273}{3920}   t_2^4  t_1^2
\\
\noalign{\vspace*{1mm}} &&+ \frac{321945}{4}   t_1^4  t_2^2+ \frac{314171}{4}   t_1^5  t_2+ \frac{287134716635}{168}
   t_2^3  t_3^2  t_1- \frac{52659146973}{80}   t_2^2  t_3^2  t_1^2- \frac{35536353385}{294}   t_2^4  t_3  t_1\\
\noalign{\vspace*{1mm}} &&+ \frac{60928171523}{280}   t_2^3  t_3  t_1^2+ \frac{52899535}{3}   t_1^4  t_2  t_3- \frac{446331197}{140}   t_1^3  t_2^3- \frac{5296771655}{12}   t_1^4  t_3^2-
   \frac{49879553251}{30}   t_1^3  t_2  t_3^2\\
\noalign{\vspace*{1mm}} &&+ \frac{23802911463}{140}   t_1^3  t_2^2  t_3- \frac{257049}{16}   t_2^6,
\\
\noalign{\vspace*{3mm}}
 p_3&=& - \frac{2676488123101}{7056}   t_2^4  t_3^2+ \frac{94253497}{42}   t_2^5  t_3-
 \frac{379182089}{24}   t_1^5  t_3+
 \frac{15185833}{35}   t_2^5  t_1- \frac{219513256369}{19600}   t_2^4  t_1^2\\
\noalign{\vspace*{1mm}} &&+ \frac{797945837}{280}   t_1^4  t_2^2+ \frac{314171}{4}   t_1^5  t_2+ \frac{3079803152296}{2205}   t_2^3  t_3^2  t_1- \frac{16059945270739}{29400}   t_2^2  t_3^2  t_1^2\\
\noalign{\vspace*{1mm}} &&- \frac{786351504991}{5880}   t_2^4  t_3  t_1+ \frac{3371173762457}{14700}   t_2^3  t_3  t_1^2- \frac{9628594001}{420}   t_1^4  t_2  t_3- \frac{163616167}{140}   t_1^3  t_2^3\\
\noalign{\vspace*{1mm}} &&- \frac{51903261673}{144}   t_1^4  t_3^2- \frac{857765630677}{630}   t_1^3  t_2  t_3^2  + \frac{32679676343}{210}   t_1^3  t_2^2  t_3-
 \frac{28561}{4}   t_2^6- \frac{3455881}{16}   t_1^6, \\
\noalign{\vspace*{2mm}}
p_4&=&   (-5348  t_1^2  t_3+5525  t_2^2  t_3+169  t_1^2  t_2+757  t_1  t_2^2-10059  t_1  t_2  t_3)^2.
\end{array}
\]
Applying the results in \cite{PS-grado}, one deduces that $\cP$ is proper.
We apply  Algorithm \ref{AlgPoli}. Clearly, $\cP$ is not almost polynomial and hence steps 1-4 does not apply. In Step 5, we perform   Algorithm \ref{AlgTrans}. The base locus is  $$ \myB({\cP})=\{(0:0:1),(1:2:1), (5:7:1), (1/3:-1/7:1),(-13:7:1)\}.$$ Moreover, it holds that
   $$\mult(A, \myB({\cP})) =4$$ for every  $A\in \myB({\cP})$ except for $A=(0:0:1)$ that satisfies that $\mult(A, \myB({\cP}))=16$. Thus, the necessary condition in Algorithm \ref{AlgTrans} is fulfilled. In addition, one may also check that the gcd of the tangents is 1, for each base point. As a consequence, we deduce that $\cP$ is transversal.
Now, using that
$$\deg({\cS})=\deg(\cP)/\sqrt{\deg(\myS)}=6/2=3,$$ and that $$\mult(A, \myB({\cS}))=\mult(A, \myB({\cP}))/\deg(\myS)=1,$$
 for every  $A\in \myB({\cP})$ except for $A=(0:0:1)$ that satisfies that $\mult(A, \myB({\cS}))=4$,
we compute the $3$-linear system associated to the effective divisor
\[4\, (0:0:1)+(1:2:1)+ (5:7:1)+ (1/3:-1/7:1)+(-13:7:1)  .\]
We get that $\myL=\lambda_1 \overline{s}_1+\lambda_2 \overline{s}_2+\lambda_3 \overline{s}_3$ where
$$\begin{array}{ccl}
\overline{s}_1 &=&  \frac{203971}{12}  t_1^2 t_3- \frac{1463501}{84}  t_2^2 t_3+
\frac{3373732}{105}  t_1 t_2 t_3+ \frac{1859}{4}  t_1^3+ \frac{169}{2}  t_2^3-
\frac{169}{2}  t_1^2 t_2- \frac{438913}{140}  t_1 t_2^2,
\\
\noalign{\vspace*{2mm}}
\overline{s}_2 &=&
 \frac{37443}{2}  t_1^2 t_3- \frac{538707}{28}  t_2^2 t_3+  \frac{140997}{4}  t_1 t_2 t_3- \frac{507}{4}  t_2^3-507 t_1^2 t_2- \frac{71637}{28}  t_1 t_2^2,
  \\
\noalign{\vspace*{2mm}}
\overline{s}_2 &=& \frac{26747}{2}  t_1^2 t_3- \frac{384007}{28}  t_2^2 t_3-338 t_1^2 t_2- \frac{50441}{28}  t_1 t_2^2+ \frac{100761}{4}  t_1 t_2 t_3- \frac{507}{4}  t_2^3.
\end{array}$$
So, we take, for instance, $\overline{\cS}(\ot)=(\overline{s}_1(\ot): \overline{s}_2(\ot): \overline{s}_3(\ot))$ and we compute $\overline{\cR}(\ot)=\overline{\cS}^{-1}(\ot)=(\overline{r}_1(\ot): \overline{r}_2(\ot): \overline{r}_3(\ot)) $ where
$$\begin{array}{ccl}
\overline{r}_1 &=&  \dfrac{1}{11} (-34331  t_2+7140  t_1+39091  t_3)
 (-1240370879  t_2^2+4693319730  t_2  t_3
 \\
\noalign{\vspace*{2mm}}
&&-957816090  t_1  t_2-4096303731  t_3^2+26989200  t_1^2+1272637170  t_1  t_3),\\
\noalign{\vspace*{2mm}}
\overline{r}_2 &=&  -\dfrac{7}{3} (-5349  t_3+3821  t_2) (-1240370879  t_2^2+4693319730  t_2  t_3-957816090  t_1  t_2\\
\noalign{\vspace*{2mm}}
&&-4096303731  t_3^2+26989200  t_1^2+1272637170  t_1  t_3),
\\
\noalign{\vspace*{2mm}}
\overline{r}_3 &=&  9122349600  t_1^2  t_3-6081566400  t_1^2  t_2+5962839694227  t_3^3-13840668860013  t_2  t_3^2\\
\noalign{\vspace*{2mm}}
&&+10640657052993  t_2^2  t_3-2711599696487  t_2^3+503701536030  t_1  t_2^2\\
\noalign{\vspace*{2mm}}
&&-1409880894660  t_1  t_2  t_3+985048833510  t_1  t_3^2.
\end{array}
$$
\noindent
Finally,  we obtain
$$\cQ(\ot)=\cP(\overline{\cR}(\ot))=(t_1^2+t_2^2-t_2t_3: -t_1t_2-t_2^2+t_1t_3: -t_1^2+t_3^2-t_2t_3: (t_2-t_3)^2).$$
Since $q_4(\ot)=(t_2-t_3)^2$,   the algorithm returns
$$\cQ((t_1, t_2, t_2-t_3)^{-1})=(t_1^2+t_2t_3: -t_2^2-t_1 t_3: -t_1^2+t_3^2-t_2t_3: t_3^2)$$ that is a  rational proper polynomial parametrization of $\myS$ with empty base locus. Note that the affine polynomial parametrization is given as
$$(t_1^2+t_2, -t_2^2-t_1, -t_1^2+1-t_2).$$
\end{example}

\para

\begin{example}\label{E-algorithm3} Let $\cP(\ot)=(p_1(\ot): p_2(\ot): p_3(\ot): p_4(\ot))$ be a rational parametrization of an algebraic surface $\myS$, where

\[
\begin{array}{ccl}
p_1&=& (-14065142 t_1^3 t_3+29410550 t_2^3 t_3-29410550 t_2 t_1^2 t_3+14065142 t_2^2 t_1 t_3+27633480 t_1^4 \\
\noalign{\vspace*{2mm}}
&&-46976541 t_1 t_2^3+64760061 t_1^3 t_2)^2,
 \\
\noalign{\vspace*{3mm}}
p_2&=&
 15452942581758441/7 \, t_2 t_1^6 t_3- 317479084729363299/49 \  t_2^6 t_1 t_3
  \\
\noalign{\vspace*{2mm}}
&&- 68267697305871459/7 \, t_2^5 t_1^2 t_3- 18666824719928010/7 \, t_2^5 t_1 t_3^2
 \\
\noalign{\vspace*{2mm}}
&&+ 212684946864036627/49  t_2^4 t_1^2 t_3^2+ 37333649439856020/7  t_2^3 t_1^3 t_3^2 \\
\noalign{\vspace*{2mm}}
&&-2927680573060371 t_2^2 t_1^5 t_3- 18666824719928010/7  t_2 t_1^5 t_3^2 \\
\noalign{\vspace*{2mm}}
&&+ 10954535298967494/7  t_2^3 t_1^5+1789545850442280 t_2^2 t_1^6-1587369926524977 t_2 t_1^7 \\
\noalign{\vspace*{2mm}}
&&-700212410256675 t_1^6 t_3^2+1255537783884564 t_1^7 t_3+ 69932525820304176/7  t_2^7 t_1 \\
\noalign{\vspace*{2mm}}
&&- 202783295759585328/49  t_2^6 t_1^2-3042203660729001 t_2^5 t_1^3+ 48537853394156778/7  t_2^7 t_3 \\
\noalign{\vspace*{2mm}}
&&- 123497677483306851/49  t_2^6 t_3^2+ 399414081398977842/49  t_2^4 t_1^3 t_3 \\
\noalign{\vspace*{2mm}}
&&-4193865500723721 t_2^8-987075578994849 t_1^8-217339297920270 t_2^4 t_1^4 \\
\noalign{\vspace*{2mm}}
&&- 54876861278152701/49  t_2^2 t_1^4 t_3^2+ 4276901329956240/7  t_2^3 t_1^4 t_3, \\
\noalign{\vspace*{2mm}}
p_3& = &   3/7 (24511557 t_1^4-64760061 t_1^2 t_2^2+11755445 t_2 t_1^2 t_3+38554704 t_1 t_2^3-1125425 t_2^2 t_1 t_3 \\
\noalign{\vspace*{2mm}}
&&-11755445 t_2^3 t_3+1125425 t_1^3 t_3) (-151106809 t_2^4+97487778 t_2^3 t_3+269939512 t_1 t_2^3 \\
\noalign{\vspace*{2mm}}
&&+59811570 t_2^2 t_1 t_3-151106809 t_1^2 t_2^2-97487778 t_2 t_1^2 t_3+98258706 t_1^4-59811570 t_1^3 t_3) \\
\noalign{\vspace*{2mm}}
p_4&=&   (24511557 t_1^4-64760061 t_1^2 t_2^2+11755445 t_2 t_1^2 t_3+38554704 t_1 t_2^3-1125425 t_2^2 t_1 t_3 \\
\noalign{\vspace*{2mm}}
&&-11755445 t_2^3 t_3+1125425 t_1^3 t_3)^2.
\end{array}\]
Applying the results in \cite{PS-grado}, one gets that $\cP$ is proper. However, when applying
  Algorithm \ref{AlgTrans}, we get that $$\myB({\cP})= \{(0:0:1),(1:2:1), (5:7:1), (1/3:-1/7:1),(-13:7:1)\}$$ and  that $\mult(A, \myB({\cP}))=4$ for every  $A\in \myB({\cP})$ except for $A=(0:0:1)$ where $\mult(A, \myB({\cP}))=44$.  Since  $\mult(A, \myB({\cP}))=44$, which is not the square of a natural number, the algorithm returns that   $\cP$ is not transversal. Thus,   we can not apply Algorithm  \ref{AlgPoli}.
\end{example}

%For this purpose, by applying the results in \cite{PSV-Singularities}, we get that $\deg(\myS)=4$.

\section{Conclusions}
Some crucial difficulties in many applications, and algorithmic questions, dealing with surface parametrizations are, on one hand, the presence of base points and, on the other, the existence of non-constant denominators of the parametrizations. In this paper, we have seen how to provide a polynomial parametrization with empty base locus, and hence an algorithm to avoid the two complications mentioned above, if it is possible. For this purpose, we have had to introduce, and indeed impose, the notion of transversal base locus. This notion directly affects to the transversality of the tangents at the base points of the algebraic plane curves $V_i$ or $W_i$ (see \eqref{eq-V} and \eqref{eq-W}). This, somehow, implies that in general one may expect transversality in the input. In any case, we do deal here with the non-transversal case and we leave it as an open problem. We think that using the ideas, pointed out by J. Schicho in \cite{SchichoIssac}, on  blowing up the base locus, one might transform the given problem (via a finite sequence of Cremone transformations and projective transformations) into the case of transversality.

\section*{Acknowledgements}
This work has been partially supported by FEDER/Ministerio de Ciencia, Innovación y Universidades-Agencia Estatal de Investigación/MTM2017-88796-P (Symbolic Computation:
new challenges in Algebra and Geometry together with its applications). Authors belong to the Research Group ASYNACS (Ref. CT-CE2019/683).

\para

%%%%%%%%%%%%%%%%%
%%%%%%%%%%%%%%%%%
%%%%%%%%%%%%%%%%%
%%%%%%%%%%%%%%%%%
%%%%%%%%%%%%%%%%%
%%%%%%%%%%%%%%%%%
%%%%%%%%%%%%%%%%%
%%%%%%%%%%%%%%%%%
%%%%%%%%%%%%%%%%%
%%%%%%%%%%%%%%%%%
%%%%%%%%%%%%%%%%%

\end{document}